\begin{document}

\newcommand{\s}{\sigma}
\newcommand{\al}{\alpha}
\newcommand{\om}{\omega}
\newcommand{\be}{\beta}
\newcommand{\la}{\lambda}
\newcommand{\vp}{\varphi}

\newcommand{\bo}{\mathbf{0}}
\newcommand{\bone}{\mathbf{1}}

\newcommand{\sse}{\subseteq}
\newcommand{\contains}{\supseteq}
\newcommand{\forces}{\Vdash}

\newcommand{\FIN}{\mathrm{FIN}}
\newcommand{\Fin}{\mathrm{Fin}}
\newcommand{\fin}{\mathrm{fin}}

\newcommand{\ve}{\vee}
\newcommand{\w}{\wedge}
\newcommand{\bv}{\bigvee}
\newcommand{\bw}{\bigwedge}
\newcommand{\bcup}{\bigcup}
\newcommand{\bcap}{\bigcap}

\newcommand{\rgl}{\rangle}
\newcommand{\lgl}{\langle}
\newcommand{\lr}{\langle\ \rangle}
\newcommand{\re}{\restriction}

\newcommand{\bB}{\mathbb{B}}
\newcommand{\bP}{\mathbb{P}}
\newcommand{\bR}{\mathbb{R}}
\newcommand{\bW}{\mathbb{W}}
\newcommand{\bX}{\mathbb{X}}
\newcommand{\bN}{\mathbb{N}}
\newcommand{\bQ}{\mathbb{Q}}
\newcommand{\bS}{\mathbb{S}}
\newcommand{\St}{\tilde{S}}
   
\newcommand{\sd}{\triangle}
\newcommand{\cl}{\prec}
\newcommand{\cle}{\preccurlyeq}
\newcommand{\cg}{\succ}
\newcommand{\cge}{\succcurlyeq}
\newcommand{\dom}{\mathrm{dom}}
\newcommand{\ran}{\mathrm{ran}}

\newcommand{\lra}{\leftrightarrow}
\newcommand{\ra}{\rightarrow}
\newcommand{\llra}{\longleftrightarrow}
\newcommand{\Lla}{\Longleftarrow}
\newcommand{\Lra}{\Longrightarrow}
\newcommand{\Llra}{\Longleftrightarrow}
\newcommand{\rla}{\leftrightarrow}
\newcommand{\lora}{\longrightarrow}
\newcommand{\E}{\mathrm{E}}
\newcommand{\rank}{\mathrm{rank}}
\newcommand{\lefin}{\le_{\mathrm{fin}}}
\newcommand{\Ext}{\mathrm{Ext}}
\newcommand{\lelex}{\le_{\mathrm{lex}}}
\newcommand{\depth}{\mathrm{depth}}

\newcommand{\Erdos}{Erd{\H{o}}s}
\newcommand{\Pudlak}{Pudl{\'{a}}k}
\newcommand{\Rodl}{R{\"{o}}dl}
\newcommand{\Proml}{Pr{\"{o}}ml}
\newcommand{\Fraisse}{Fra{\"{i}}ss{\'{e}}}
\newcommand{\Sokic}{Soki{\'{c}}}
\newcommand{\Nesetril}{Ne{\v{s}}et{\v{r}}il}

\newtheorem{thm}{Theorem}  
\newtheorem{prop}[thm]{Proposition} 
\newtheorem{lem}[thm]{Lemma} 
\newtheorem{cor}[thm]{Corollary} 
\newtheorem{fact}[thm]{Fact}    
\newtheorem{facts}[thm]{Facts}      
\newtheorem*{thmMT}{Main Theorem}
\newtheorem*{thmMTUT}{Main Theorem for $\vec{\mathcal{U}}$-trees}
\newtheorem*{thmnonumber}{Theorem}
\newtheorem*{mainclaim}{Main Claim}
\newtheorem{claim}{Claim}
\newtheorem*{claim1}{Claim $1$}
\newtheorem*{claim2}{Claim $2$}
\newtheorem*{claim3}{Claim $3$}
\newtheorem*{claim4}{Claim $4$}

\theoremstyle{definition}   
\newtheorem{defn}[thm]{Definition} 
\newtheorem{example}[thm]{Example} 
\newtheorem{conj}[thm]{Conjecture} 
\newtheorem{prob}[thm]{Problem} 
\newtheorem{examples}[thm]{Examples}
\newtheorem{question}[thm]{Question}
\newtheorem{problem}[thm]{Problem}
\newtheorem{openproblems}[thm]{Open Problems}
\newtheorem{openproblem}[thm]{Open Problem}
\newtheorem{conjecture}[thm]{Conjecture}
\newtheorem*{problem1}{Problem 1}
\newtheorem*{problem2}{Problem 2}
\newtheorem*{problem3}{Problem 3}
\newtheorem*{notation}{Notation}

\theoremstyle{remark} 
\newtheorem*{rem}{Remark} 
\newtheorem*{rems}{Remarks} 
\newtheorem*{ack}{Acknowledgments} 
\newtheorem*{note}{Note}
\newtheorem{claimn}{Claim}
\newtheorem{subclaim}{Subclaim}
\newtheorem*{subclaimnn}{Subclaim}
\newtheorem*{subclaim1}{Subclaim (i)}
\newtheorem*{subclaim2}{Subclaim (ii)}
\newtheorem*{subclaim3}{Subclaim (iii)}
\newtheorem*{subclaim4}{Subclaim (iv)}
\newtheorem{case}{Case}
 \newtheorem*{case1}{Case 1}
\newtheorem*{case2}{Case 2}
\newtheorem*{case3}{Case 3}
\newtheorem*{case4}{Case 4}

\newcommand{\noprint}[1]{\relax}
\newenvironment{nd}{\noindent\color{red}ND: }{}
\newenvironment{jsm}{\noindent\color{blue}SM: }{}

\title[High dimensional Ellentuck spaces]{High dimensional  Ellentuck spaces and initial chains in the Tukey structure of non-p-points}
\author{Natasha Dobrinen}
\address{Department of Mathematics\\
  University of Denver \\
   2280 S Vine St\\ Denver, CO \ 80208 U.S.A.}
\email{natasha.dobrinen@du.edu}
\urladdr{\url{http://web.cs.du.edu/~ndobrine}} 
\thanks{This work  was partially supported by 
the Fields Institute and the National Science Foundation during the 2012 Thematic Program on Forcing and Its Applications,
 National Science Foundation Grant DMS-1301665,  and   Simons Foundation Collaboration Grant 245286}

\begin{abstract}
The generic ultrafilter $\mathcal{G}_2$ forced by 
$\mathcal{P}(\om\times\om)/(\Fin\otimes\Fin)$ was recently proved to be  neither maximum nor minimum in the Tukey order of ultrafilters (\cite{Blass/Dobrinen/Raghavan13}), but it was  left open where exactly  in the Tukey order it lies.
We prove that $\mathcal{G}_2$ is  in fact  Tukey minimal over its  projected Ramsey ultrafilter.
Furthermore, we 
prove that for each $k\ge 2$, the collection of all nonprincipal ultrafilters Tukey reducible to the generic  ultrafilter  $\mathcal{G}_k$ forced by $\mathcal{P}(\om^k)/\Fin^{\otimes k}$ forms a  chain  of length $k$.
Essential to the proof is the extraction of 
 a dense subset $\mathcal{E}_k$ from  $(\Fin^{\otimes k})^+$    which we prove  to be a topological Ramsey space.
The spaces $\mathcal{E}_k$, $k\ge 2$,  form a hierarchy of  high dimensional  Ellentuck spaces.
New Ramsey-classification theorems for equivalence relations on fronts on $\mathcal{E}_k$ are proved, extending the \Pudlak-\Rodl\ Theorem for fronts on the Ellentuck space, which are applied to  find the  Tukey structure below $\mathcal{G}_k$.
\end{abstract}


\maketitle


\section{Introduction}\label{sec.intro}

The structure of the Tukey types of ultrafilters is a current  focus of  research in set theory and structural Ramsey theory; the interplay between the two areas has proven fruitful for each.
This particular line of research began in \cite{Raghavan/Todorcevic12}, in which Todorcevic showed that selective ultrafilters are minimal in the Tukey order via an insightful application of the \Pudlak-\Rodl\ Theorem canonizing equivalence relations on barriers on the Ellentuck space.
Soon after, new topological Ramsey spaces were constructed by Dobrinen and Todorcevic in \cite{Dobrinen/Todorcevic14} and \cite{Dobrinen/Todorcevic15}, in which   Ramsey-classification theorems for equivalence relations on fronts were proved and applied to find initial Tukey structures of the associated p-point ultrafilters which are decreasing chains of order-type $\al+1$ for each countable ordinal $\al$.
Recent work of Dobrinen, Mijares, and Trujillo in \cite{Dobrinen/Mijares/Trujillo14} provided a template for constructing  topological Ramsey spaces which have associated p-point ultrafilters with initial Tukey structures which are finite Boolean algebras, extending the work in \cite{Dobrinen/Todorcevic14} .

This paper is the first to examine initial Tukey structures of non-p-points.
Our work was motivated by
  \cite{Blass/Dobrinen/Raghavan13},  in which Blass, Dobrinen, and Raghavan studied 
the Tukey type of the generic ultrafilter $\mathcal{G}_2$ forced by $\mathcal{P}(\om\times\om)/\Fin\otimes\Fin$.
As this ultrafilter was known to be a Rudin-Keisler immediate successor of its projected selective ultrafilter (see Proposition 30 in \cite{Blass/Dobrinen/Raghavan13}) and at the same time be neither a p-point nor a Fubini iterate of p-points,
it became of interest to see where in the Tukey hierarchy this ultrafilter lies.

At this point, we review the  definitions and background necessary to understand the motivation for the current project.
Throughout, we consider ultrafilters to be partially ordered by reverse inclusion.
Given two ultrafilters $\mathcal{U}$ and $\mathcal{V}$,
we say that $\mathcal{V}$ is {\em Tukey reducible to} $\mathcal{U}$, and write $\mathcal{V}\le_T\mathcal{U}$,  if there is a function $f:\mathcal{U}\ra\mathcal{V}$ which maps each filter base of $\mathcal{U}$ to a filter base of $\mathcal{V}$.
$\mathcal{U}$ and $\mathcal{V}$ are {\em Tukey equivalent} if both 
$\mathcal{U}\le_T\mathcal{V}$ and $\mathcal{V}\le_T\mathcal{U}$.
In this case we write $\mathcal{U}\equiv_T\mathcal{V}$.
The Tukey equivalence class of an ultrafilter $\mathcal{U}$ is called its {\em Tukey type}.
Given an ultrafilter $\mathcal{U}$, we use the terminology {\em initial Tukey structure below $\mathcal{U}$} to denote the structure (under Tukey reducibility) of the collection of Tukey types of   all ultrafilters Tukey reducible to $\mathcal{U}$.
For ultrafilters, Tukey equivalence is the same as cofinal equivalence.

The partial order $([\mathfrak{c}]^{<\om},\sse)$ is the maximum Tukey type for all ultrafilters on a countable base set.
In \cite{Isbell65}, Isbell asked whether there is always more than one Tukey type.
The recent  surge in activity began with \cite{Milovich08} in which Milovich showed under $\lozenge$ that there can be more than one Tukey type.
This was improved in \cite{Dobrinen/Todorcevic11}, where it was shown that all p-points are strictly below the Tukey maximum.
For more background on the Tukey theory of ultrafilters, the reader is referred to the survey article \cite{Dobrinen13}.

The  paper \cite{Blass/Dobrinen/Raghavan13} of Blass, Dobrinen, and Raghavan began the investigation of the Tukey theory of the generic ultrafilter $\mathcal{G}_2$ forced by  $\mathcal{P}(\om\times\om)/\Fin\otimes\Fin$,
where $\Fin\otimes\Fin$ denotes the collection of subsets of $\om\times\om$ in which all but finitely many fibers are finite.
The motivation for this study was the open problem of whether the classes of basically generated ultrafilters and countable iterates of Fubini products of p-points are the same class of ultrafilters.
The notion of a {\em basically generated} ultrafilter was introduced by Todorcevic to extract the key property of Fubini iterates of p-points which make them strictly below $([\mathfrak{c}]^{<\om},\sse)$, the top of the Tukey hierarchy.
In Section 3 of \cite{Dobrinen/Todorcevic11}, Dobrinen and Todorcevic showed that the class of basically generated ultrafilters contains all countable iterates of Fubini products of p-points.  They then asked whether there is a basically generated ultrafilter which is not Tukey  equivalent to some iterated Fubini product of p-points. This question is still open.

Since it is well-known that the generic ultrafilter $\mathcal{G}_2$ is not a Fubini product of p-points, yet is a Rudin-Keisler immediate successor of its projected selective ultrafilter,  
Blass 
asked whether  $\mathcal{G}_2$  is Tukey maximum, and if not, then whether it is basically generated.
In \cite{Blass/Dobrinen/Raghavan13}, 
Blass proved that $\mathcal{G}_2$ is a weak p-point which
has the best partition property that a non-p-point can have. 
Dobrinen and Raghavan independently proved that $\mathcal{G}_2$ is not Tukey maximum, which was improved by 
Dobrinen  in Theorem 49 in \cite{Blass/Dobrinen/Raghavan13}  by showing that $(\mathcal{G}_2,\contains)\not\ge_T ([\om_1]^{<\om},\sse)$, thereby showing in a strong way that $\mathcal{G}_2$ does not have the maximum Tukey type for ultrafilters on a countable base set.
Answering the other question of Blass, Raghavan showed in Theorem 60 in \cite{Blass/Dobrinen/Raghavan13} that $\mathcal{G}_2$ is not basically generated.
However, that paper left open the question of where exactly in the Tukey hierarchy $\mathcal{G}_2$ lies, and what the structure of the Tukey types below it actually is.

In this paper, we prove  that  the initial Tukey structure below $\mathcal{G}_2$ is exactly a chain of order-type 2.
In particular, $\mathcal{G}_2$ is
 the immediate Tukey successor of its projected selective ultrafilter.
Extending this further, we  investigate the initial Tukey structure of the generic ultrafilters forced by $\mathcal{P}(\om^k)/\Fin^{\otimes k}$.
Here, 
$\Fin^{k+1}$ is defined recursively:
$\Fin^1$ denotes the collection of finite subsets of $\om$;  for 
 $k\ge 1$, 
 $\Fin^{\otimes k+1}$ denotes the collection of subsets $X\sse\om^{k+1}$ such that for all but finitely many $i_0\in\om$, the set $\{(i_0,j_1\dots,j_k)\in\om^{k+1}:j_1,\dots,j_k\in\om\}$ is in $\Fin^{\otimes k}$.
We prove in Theorem \ref{thm.main}
that for all $k\ge 2$,
the generic ultrafilter $\mathcal{G}_k$ 
forced by $\mathcal{P}(\om^k)/\Fin^{\otimes k}$
has initial Tukey structure (of nonprincipal ultrafilters) exactly a chain of size $k$.
We also show that the Rudin-Keisler structures below $\mathcal{G}_k$ is 
exactly a chain of size $k$. 
Thus, the Tukey structure below $\mathcal{G}_k$ mirrors the Rudin-Keisler structure below $\mathcal{G}_k$.

We remark that the structure of the spaces $\mathcal{E}_k$ provide 
a clear way of understanding the partition relations satisfied by $\mathcal{G}_k$.
In particular, our space $\mathcal{E}_2$ 
provides  an alternate  method for proving  Theorem 31 of \cite{Blass/Dobrinen/Raghavan13}, due to Blass,  where it is shown that $\mathcal{G}_2$ has the best partition properties that a non-p-point can have.

The paper is organized as follows.
Section \ref{sec.reviewtRs} provides some  background on topological Ramsey spaces from Todorcevic's book \cite{TodorcevicBK10}. 
The new topological Ramsey spaces $\mathcal{E}_k$, $k\ge 2$,  are introduced in  Section \ref{sec.defR}.
These spaces are formed by thinning the forcing $((\Fin^{\otimes k})^+,\sse^{\Fin^{\otimes k}})$, which is  forcing equivalent to $\mathcal{P}(\om^k)/\Fin^{\otimes k}$, to a dense subset and judiciously choosing the finitization map so as to form a topological Ramsey space.
Once formed, these spaces  are seen to be  high dimensional extensions  of the Ellentuck space.
The Ramsey-classification theorem generalizing the \Pudlak-\Rodl\ Theorem to all spaces $\mathcal{E}_k$, $k\ge 2$ is proved in Theorem \ref{thm.rc} of Section
\ref{sec.rct}.
Theorem \ref{thm.canon}  in Section \ref{sec.basic}
shows 
  that any monotone cofinal map from the generic ultrafilter $\mathcal{G}_k$ into some other ultrafilter is actually represented on a filter base by some monotone, end-extension preserving finitary map.
This is the analogue  of p-points having continuous cofinal maps for our current setting,
and is sufficient for the arguments using canonical maps on fronts to find the initial Tukey structure below $\mathcal{G}_k$, which we do in Theorem \ref{thm.main} of Section \ref{sec.Tukey}.

\bf Acknowledgment. \rm 
The author thanks S. Todorcevic for his suggestion of investigating  the Tukey types of the generic ultrafilters for the higher dimensional  forcings $\mathcal{P}(\om^k)/\Fin^{\otimes k}$.


\section{Basics of  general topological Ramsey spaces}\label{sec.reviewtRs}

For the reader's convenience, we  provide here a brief review of topological Ramsey spaces.
Building on earlier work of Carlson and Simpson in \cite{Carlson/Simpson90}, Todorcevic distilled the key properties of the Ellentuck space into  four axioms, \bf A.1  \rm -  \bf A.4\rm, which guarantee that a space is a topological Ramsey space.
As several recent papers have been devoted to topological Ramsey spaces, related canonical equivalence relations on fronts and their applications to initial Tukey structures of associated ultrafilters (see \cite{Dobrinen/Todorcevic14}, \cite{Dobrinen/Todorcevic15} and \cite{Dobrinen/Mijares/Trujillo14}), we reproduce  here only information necessary to aiding the reader in understanding the proofs in this paper.
For further background, we refer the reader to Chapter 5 of \cite{TodorcevicBK10}.

The  axioms \bf A.1  \rm -  \bf A.4\rm,
are defined for triples
$(\mathcal{R},\le,r)$
of objects with the following properties.
$\mathcal{R}$ is a nonempty set,
$\le$ is a quasi-ordering on $\mathcal{R}$,
 and $r:\mathcal{R}\times\om\ra\mathcal{AR}$ is a mapping giving us the sequence $(r_n(\cdot)=r(\cdot,n))$ of approximation mappings, where
$\mathcal{AR}$ is  the collection of all finite approximations to members of $\mathcal{R}$.
For $a\in\mathcal{AR}$ and $A,B\in\mathcal{R}$,
\begin{equation}
[a,B]=\{A\in\mathcal{R}:A\le B\mathrm{\ and\ }(\exists n)\ r_n(A)=a\}.
\end{equation}

For $a\in\mathcal{AR}$, let $|a|$ denote the length of the sequence $a$.  Thus, $|a|$ equals the integer $k$ for which $a=r_k(a)$.
For $a,b\in\mathcal{AR}$, $a\sqsubseteq b$ if and only if $a=r_m(b)$ for some $m\le |b|$.
$a\sqsubset b$ if and only if $a=r_m(b)$ for some $m<|b|$.
For each $n<\om$, $\mathcal{AR}_n=\{r_n(A):A\in\mathcal{R}\}$.
\vskip.1in

\begin{enumerate}
\item[\bf A.1]\rm
\begin{enumerate}
\item
$r_0(A)=\emptyset$ for all $A\in\mathcal{R}$.\vskip.05in
\item
$A\ne B$ implies $r_n(A)\ne r_n(B)$ for some $n$.\vskip.05in
\item
$r_n(A)=r_m(B)$ implies $n=m$ and $r_k(A)=r_k(B)$ for all $k<n$.\vskip.1in
\end{enumerate}
\item[\bf A.2]\rm
There is a quasi-ordering $\le_{\mathrm{fin}}$ on $\mathcal{AR}$ such that\vskip.05in
\begin{enumerate}
\item
$\{a\in\mathcal{AR}:a\le_{\mathrm{fin}} b\}$ is finite for all $b\in\mathcal{AR}$,\vskip.05in
\item
$A\le B$ iff $(\forall n)(\exists m)\ r_n(A)\le_{\mathrm{fin}} r_m(B)$,\vskip.05in
\item
$\forall a,b,c\in\mathcal{AR}[a\sqsubset b\wedge b\le_{\mathrm{fin}} c\ra\exists d\sqsubset c\ a\le_{\mathrm{fin}} d]$.\vskip.1in
\end{enumerate}
\end{enumerate}

The number $\depth_B(a)$ is the least $n$, if it exists, such that $a\le_{\mathrm{fin}}r_n(B)$.
If such an $n$ does not exist, then we write $\depth_B(a)=\infty$.
If $\depth_B(a)=n<\infty$, then $[\depth_B(a),B]$ denotes $[r_n(B),B]$.

\begin{enumerate}
\item[\bf A.3] \rm
\begin{enumerate}
\item
If $\depth_B(a)<\infty$ then $[a,A]\ne\emptyset$ for all $A\in[\depth_B(a),B]$.\vskip.05in
\item
$A\le B$ and $[a,A]\ne\emptyset$ imply that there is $A'\in[\depth_B(a),B]$ such that $\emptyset\ne[a,A']\sse[a,A]$.\vskip.1in
\end{enumerate}
\end{enumerate}

If $n>|a|$, then  $r_n[a,A]$ denotes the collection of all $b\in\mathcal{AR}_n$ such that $a\sqsubset b$ and $b\le_{\mathrm{fin}} A$.

\begin{enumerate}
\item[\bf A.4]\rm
If $\depth_B(a)<\infty$ and if $\mathcal{O}\sse\mathcal{AR}_{|a|+1}$,
then there is $A\in[\depth_B(a),B]$ such that
$r_{|a|+1}[a,A]\sse\mathcal{O}$ or $r_{|a|+1}[a,A]\sse\mathcal{O}^c$.\vskip.1in
\end{enumerate}

The  {\em Ellentuck topology} on $\mathcal{R}$ is the topology generated by the basic open sets
$[a,B]$;
it extends the usual metrizable topology on $\mathcal{R}$ when we consider $\mathcal{R}$ as a subspace of the Tychonoff cube $\mathcal{AR}^{\bN}$.
Given the Ellentuck topology on $\mathcal{R}$,
the notions of nowhere dense, and hence of meager are defined in the natural way.
We  say that a subset $\mathcal{X}$ of $\mathcal{R}$ has the {\em property of Baire} iff $\mathcal{X}=\mathcal{O}\cap\mathcal{M}$ for some Ellentuck open set $\mathcal{O}\sse\mathcal{R}$ and Ellentuck meager set $\mathcal{M}\sse\mathcal{R}$.

\begin{defn}[\cite{TodorcevicBK10}]\label{defn.5.2}
A subset $\mathcal{X}$ of $\mathcal{R}$ is {\em Ramsey} if for every $\emptyset\ne[a,A]$,
there is a $B\in[a,A]$ such that $[a,B]\sse\mathcal{X}$ or $[a,B]\cap\mathcal{X}=\emptyset$.
$\mathcal{X}\sse\mathcal{R}$ is {\em Ramsey null} if for every $\emptyset\ne [a,A]$, there is a $B\in[a,A]$ such that $[a,B]\cap\mathcal{X}=\emptyset$.

A triple $(\mathcal{R},\le,r)$ is a {\em topological Ramsey space} if every subset of $\mathcal{R}$  with the property of Baire  is Ramsey and if every meager subset of $\mathcal{R}$ is Ramsey null.
\end{defn}

The following result can be found as Theorem
5.4 in \cite{TodorcevicBK10}.

\begin{thm}[Abstract Ellentuck Theorem]\label{thm.AET}\rm \it
If $(\mathcal{R},\le,r)$ is closed (as a subspace of $\mathcal{AR}^{\bN}$) and satisfies axioms {\bf A.1}, {\bf A.2}, {\bf A.3}, and {\bf A.4},
then every  subset of $\mathcal{R}$ with the property of Baire is Ramsey,
and every meager subset is Ramsey null;
in other words,
the triple $(\mathcal{R},\le,r)$ forms a topological Ramsey space.
\end{thm}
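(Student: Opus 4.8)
The plan is to prove this by the method of \emph{combinatorial forcing}, the machinery that in the special case of the Ellentuck space yields the Galvin--Prikry and Ellentuck theorems. Since a set $\mathcal{X}$ with the property of Baire agrees with some Ellentuck-open set $\mathcal{U}$ off an Ellentuck-meager set $\mathcal{M}$, and $\mathcal{M}$ is a countable union of Ellentuck-nowhere-dense sets, it suffices to prove two things: (i) every Ellentuck-open set is Ramsey, and (ii) every Ellentuck-nowhere-dense set is Ramsey null. The general meager case follows from (ii) by a diagonalization, and the property-of-Baire statement then follows by combining them: given $[a,A]$, first shrink to $[a,B]$ lying inside or outside $\mathcal{U}$ using (i), then shrink to $[a,C]\sse[a,B]$ disjoint from $\mathcal{M}$ using the meager case, and observe that since $\mathcal{X}$ and $\mathcal{U}$ differ only inside $\mathcal{M}$, the set $[a,C]$ lies inside or outside $\mathcal{X}$. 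Both (i) and (ii) are instances of one argument, run with two different notions of when a condition ``accepts'' an approximation.

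I would set up the forcing as follows. Fix a target set $\mathcal{Y}$ --- the open $\mathcal{U}$, with ``accept'' meaning containment, or the nowhere-dense $\mathcal{N}$, with ``accept'' meaning disjointness. For $a\in\mathcal{AR}$ and $B\in\mathcal{R}$ with $\depth_B(a)<\infty$, say that $B$ \emph{accepts} $a$ if $[a,B]$ has the relevant property, that $B$ \emph{rejects} $a$ if no $C\in[\depth_B(a),B]$ accepts $a$, and that $B$ \emph{decides} $a$ if it accepts or rejects $a$. The lemmas to establish are: acceptance and rejection pass to every $C\in[\depth_B(a),B]$ (with \textbf{A.1} and \textbf{A.2} handling depth bookkeeping); every $a$ is decided by some $C\in[\depth_B(a),B]$; acceptance propagates up, i.e.\ if $B$ accepts $a$ then $B$ accepts every $b\in r_{|a|+1}[a,B]$, which is immediate from $[b,B]\sse[a,B]$; and, the crux, \emph{if $B$ rejects $a$ then some $C\in[\depth_B(a),B]$ rejects every $b\in r_{|a|+1}[a,C]$}. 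For this last lemma I would apply \textbf{A.4} to the set of those $b\in\mathcal{AR}_{|a|+1}$ that are accepted by some condition downstream; on the homogeneous $C$ this gives either that every such $b$ is rejected, as wanted, or that every such $b$ is ``acceptable'', and in the latter case amalgamating the accepting witnesses upward --- here \textbf{A.3}(a) and (b) are used --- produces a $C'\in[\depth_B(a),B]$ that accepts $a$, contradicting that $B$ rejects $a$.

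Then I would run a fusion. From an arbitrary $[a,A]$, build a $\le$-decreasing sequence $A\ge A_{|a|}\ge A_{|a|+1}\ge\cdots$ in which each coordinate $r_n$ is eventually constant and $A_n$ decides every $b$ with $a\sqsubseteq b\lefin r_n(A_n)$ --- finitely many by \textbf{A.2}(a) --- arranging, via the crux lemma together with \textbf{A.3}-amalgamations over the finitely many active nodes at each depth, that rejection is inherited along $\sqsubset$. Because $\mathcal{R}$ is closed in $\mathcal{AR}^{\bN}$, the coordinatewise limit $C$ of the $A_n$ lies in $\mathcal{R}$, and $C\le A_n$ for all $n$ by \textbf{A.2}(b). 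Now either $C$ accepts $a$, so that $[a,C]$ already has the target property, or $C$ rejects $a$ and hence rejects every $b$ with $a\sqsubseteq b\lefin C$. In the rejecting case, for any $X\in[a,C]$ with the ``opposite'' behavior --- in case (i), $X\in\mathcal{U}$; in case (ii), $X$ outside the Ellentuck closure of $\mathcal{N}$ --- the local characterization of Ellentuck-open sets produces $b=r_n(X)\sqsupseteq a$ with $[b,X]$ contained in $\mathcal{U}$ (resp.\ disjoint from $\mathcal{N}$), and then \textbf{A.3}(b), applied with $X$ itself in the role of a condition below $C$, yields $C'\in[\depth_C(b),C]$ with $[b,C']\sse[b,X]$, so $C'$ accepts $b$, contradicting that $C$ rejects $b$. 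Thus in case (i), $[a,C]\sse\mathcal{U}^c$, establishing that $\mathcal{U}$ is Ramsey; and in case (ii), $[a,C]$ lies in the Ellentuck closure of $\mathcal{N}$, which is impossible since $[a,C]$ is a nonempty Ellentuck-open set and $\mathcal{N}$ is nowhere dense, so $C$ in fact accepts $a$ and $[a,C]\cap\mathcal{N}=\emptyset$, establishing that $\mathcal{N}$ is Ramsey null.

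I expect the main obstacle to be the crux lemma and its use inside the fusion: selecting the right subset of $\mathcal{AR}_{|a|+1}$ to feed into \textbf{A.4}, and carrying out the upward amalgamation of accepting witnesses through \textbf{A.3}(a),(b) when many nodes are active simultaneously, all while keeping the depth bookkeeping consistent enough that the fusion sequence is genuinely $\le$-decreasing with the required stabilization and its coordinatewise limit stays in $\mathcal{R}$. By comparison, reducing the property-of-Baire statement to (i) and (ii) and assembling countably many nowhere-dense sets into a meager one by diagonalization are routine once those ingredients are in place. All of this is carried out in detail in Chapter~5 of \cite{TodorcevicBK10}.
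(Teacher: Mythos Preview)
The paper does not supply its own proof of this theorem; it simply cites it as Theorem~5.4 of \cite{TodorcevicBK10}. Your sketch is the standard combinatorial-forcing argument from that reference and is correct, so there is nothing to compare.
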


\begin{defn}[\cite{TodorcevicBK10}]\label{defn.5.16}
A family $\mathcal{F}\sse\mathcal{AR}$ of finite approximations is
\begin{enumerate}
\item
{\em Nash-Williams} if $a\not\sqsubseteq b$ for all $a\ne b\in\mathcal{F}$;
\item
{\em Ramsey} if for every partition $\mathcal{F}=\mathcal{F}_0\cup\mathcal{F}_1$ and every $X\in\mathcal{R}$,
there are $Y\le X$ and $i\in\{0,1\}$ such that $\mathcal{F}_i|Y=\emptyset$.
\end{enumerate}
\end{defn}

The Abstract Nash-Williams Theorem (Theorem 5.17  in \cite{TodorcevicBK10}), which follows from the Abstract Ellentuck Theorem,
will suffice for the arguments in this paper.

\begin{thm}[Abstract Nash-Williams Theorem]\label{thm.ANW}
Suppose $(\mathcal{R},\le,r)$ is a closed triple that satisfies {\bf A.1} - {\bf A.4}. Then
every Nash-Williams family of finite approximations is Ramsey.
\end{thm}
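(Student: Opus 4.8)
The plan is to deduce this directly from the Abstract Ellentuck Theorem (Theorem~\ref{thm.AET}), exploiting the Nash-Williams hypothesis so that a single application suffices. Fix a Nash-Williams family $\mathcal{F}\sse\mathcal{AR}$, a partition $\mathcal{F}=\mathcal{F}_0\cup\mathcal{F}_1$, and some $X\in\mathcal{R}$. Recall that for $Y\in\mathcal{R}$, $\mathcal{F}_i|Y=\emptyset$ means that no $a\in\mathcal{F}_i$ has $\depth_Y(a)<\infty$; by {\bf A.3}(a) this is the same as saying $[a,Y]=\emptyset$ for every $a\in\mathcal{F}_i$.

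First I would consider the set $\mathcal{X}=\{A\in\mathcal{R}:\exists n\ r_n(A)\in\mathcal{F}_0\}$ and check that it is open in the Ellentuck topology: if $A\in\mathcal{X}$ with $r_n(A)=a\in\mathcal{F}_0$, then $A\in[a,A]$ and, by the definition of the basic open set $[a,A]$, every member of $[a,A]$ again has $a$ as an approximation, so $[a,A]\sse\mathcal{X}$. Thus $\mathcal{X}$ has the property of Baire, and Theorem~\ref{thm.AET} makes it Ramsey. Applying this to the nonempty set $[\emptyset,X]$ (note $\emptyset=r_0(X)$ and $X\in[\emptyset,X]$) yields $Y\le X$ with either $[\emptyset,Y]\cap\mathcal{X}=\emptyset$ or $[\emptyset,Y]\sse\mathcal{X}$. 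Since $r_0(A)=\emptyset$ for every $A$, the set $[\emptyset,Y]$ is just $\{A\in\mathcal{R}:A\le Y\}$.

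In the first case I claim $\mathcal{F}_0|Y=\emptyset$: if some $a\in\mathcal{F}_0$ had $\depth_Y(a)=n<\infty$, then $Y\in[r_n(Y),Y]=[\depth_Y(a),Y]$, so {\bf A.3}(a) gives $[a,Y]\ne\emptyset$, and any $A\in[a,Y]$ lies in $[\emptyset,Y]\cap\mathcal{X}$ (it is $\le Y$ and has $a\in\mathcal{F}_0$ as an approximation), contradicting the case hypothesis. In the second case I claim $\mathcal{F}_1|Y=\emptyset$: if some $a\in\mathcal{F}_1$ had $\depth_Y(a)<\infty$, the same argument produces $A\in[a,Y]$, say with $r_m(A)=a$; since $A\le Y$ we have $A\in[\emptyset,Y]\sse\mathcal{X}$, so $r_n(A)\in\mathcal{F}_0$ for some $n$. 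Now $r_n(A)$ and $r_m(A)=a$ are both members of $\mathcal{F}$, they are distinct because $\mathcal{F}_0\cap\mathcal{F}_1=\emptyset$, and one is a proper $\sqsubseteq$-initial segment of the other (indeed $r_{\min(n,m)}(A)\sqsubset r_{\max(n,m)}(A)$), contradicting the Nash-Williams property of $\mathcal{F}$. Either way we have found $Y\le X$ and $i\in\{0,1\}$ with $\mathcal{F}_i|Y=\emptyset$, which is exactly what the definition of a Ramsey family demands.

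I do not anticipate a genuine obstacle; the real content sits inside the Abstract Ellentuck Theorem, which we are entitled to use. The only delicate points are the observation that $\mathcal{X}$ is open and, a little more importantly, the use of {\bf A.3}(a) to realize a finite approximation $a$ of finite depth in $Y$ as an honest approximation $r_m(A)$ of some $A\le Y$: this is precisely the bridge between the topological dichotomy handed to us by Theorem~\ref{thm.AET} and the purely combinatorial conclusion $\mathcal{F}_i|Y=\emptyset$.
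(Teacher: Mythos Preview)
Your argument is correct and is exactly the approach the paper indicates: the paper does not spell out a proof of Theorem~\ref{thm.ANW} but simply records that it ``follows from the Abstract Ellentuck Theorem'' (citing Theorem~5.17 of \cite{TodorcevicBK10}), and your derivation---passing to the open set $\mathcal{X}=\{A:\exists n\ r_n(A)\in\mathcal{F}_0\}$, applying Theorem~\ref{thm.AET} on $[\emptyset,X]$, and using \textbf{A.3}(a) together with the Nash-Williams antichain condition to handle the two cases---is the standard way to carry this out.
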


\begin{defn}\label{def.frontR1}
Suppose $(\mathcal{R},\le,r)$ is a closed triple that satisfies {\bf A.1} - {\bf A.4}.
Let $X\in\mathcal{R}$.
A family $\mathcal{F}\sse\mathcal{AR}$ is a {\em front} on $[0,X]$ if
\begin{enumerate}
\item
For each $Y\in[0,X]$, there is an $a\in \mathcal{F}$ such that $a\sqsubset Y$; and
\item
$\mathcal{F}$ is Nash-Williams.
\end{enumerate}
\end{defn}

\begin{rem}
There is also a general notion of {\em barrier} for topological Ramsey spaces (see Definition 5.18 in \cite{TodorcevicBK10}).
Everything proved for the spaces $\mathcal{E}_k$, $k\ge 2$, in this paper for fronts carries over to barriers,
 since given a  front, there is a member of the space such that, relativized to that member, the front becomes a barrier.
This follows from  Corollary 5.19 in \cite{TodorcevicBK10}, since 
 for each space $\mathcal{E}_k$, the  quasi-order $\le_{\fin}$ is actually a partial order.
Rather than defining more notions than are necessary for the main results in this paper, we provide these references for the interested reader.
\end{rem}

We finish this section by reminding the reader of  the \Pudlak-\Rodl\ Theorem for canonical equivalence relations on fronts on the Ellentuck space.

\begin{defn}\label{def.irredPR}
Let $([\om]^{\om},\sse,r)$ be the Ellentuck space.
A map $\vp$ on a front $\mathcal{F}\sse [\om]^{<\om}$ is called
\begin{enumerate}
\item
{\em inner} if for each $a\in \mathcal{F}$, 
$\vp(a)\sse a$.
\item
{\em Nash-Williams} if for all pairs $a,b\in \mathcal{F}$,
$\vp(a)\not\sqsubset \vp(b)$.
\item 
{\em irreducible} if it is inner and Nash-Williams.
\end{enumerate}
\end{defn}

\begin{thm}[\Pudlak/\Rodl,  \cite{Pudlak/Rodl82}]
Let $R$ be an equivalence relation on a front $\mathcal{F}$ on the Ellentuck space.
Then there is an irreducible map $\vp$ and an $X\in[\om]^{\om}$ such that for all $a,b\in\mathcal{F}$ with $a,b\sse X$,
\begin{equation}
a\, R\, b\ \ \llra\ \ \vp(a)=\vp(b).
\end{equation}
\end{thm}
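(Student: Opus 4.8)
The plan is to prove this by transfinite induction on the rank of the front. For a front $\mathcal{F}$, let $T_\mathcal{F}=\{s\in[\om]^{<\om}: s\sqsubseteq a\text{ for some }a\in\mathcal{F}\}$; because $\mathcal{F}$ is Nash-Williams and covers every $Y\in[\om]^\om$, the tree $T_\mathcal{F}$ (ordered by $\sqsubseteq$) is well-founded --- an infinite branch with union some $Y$ would yield members $a\sqsubset a'$ of $\mathcal{F}$, contradicting the Nash-Williams property --- so we may set $\rank(\mathcal{F})$ to be the rank of $\emptyset$ in $T_\mathcal{F}$. For the induction to close it is convenient to carry an infinite ground set as a parameter and prove: for every front $\mathcal{F}$, every equivalence relation $R$ on $\mathcal{F}$, and every $M\in[\om]^\om$, there are $X\in[M]^\om$ and an irreducible map $\vp$ on $\mathcal{F}$ with $a\mathrel{R}b\llra\vp(a)=\vp(b)$ for all $a,b\in\mathcal{F}$ with $a,b\sse X$.

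The base case $\rank(\mathcal{F})\le 1$ is the canonical Ramsey theorem for $[\om]^1$: either $\mathcal{F}=\{\emptyset\}$, where $\vp\equiv\emptyset$ and $X=M$ work, or $\mathcal{F}=\{\{n\}:n\in\om\}$, in which case $R$ is a partition of $\om$ and the infinite pigeonhole principle produces $X\in[M]^\om$ on which $R$ is either trivial (set $\vp(\{n\})=\emptyset$) or injective (set $\vp(\{n\})=\{n\}$); both maps are inner and Nash-Williams. For the inductive step, let $\rank(\mathcal{F})=\gamma>1$. Since $\mathcal{F}$ is Nash-Williams, whenever $\{n\}\in\mathcal{F}$ no other member of $\mathcal{F}$ starts with $n$, so by pigeonhole we may shrink $M$ so that either every singleton from $M$ lies in $\mathcal{F}$ (reducing to the rank-$1$ case) or none does; assume the latter. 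For each $n\in M$ the set $\mathcal{F}_n:=\{s:\{n\}\cup s\in\mathcal{F}\}$ is then a front of rank $<\gamma$ on $[\,\{m\in M:m>n\}\,]^\om$, carrying the induced equivalence relation $R_n$ given by $s\mathrel{R_n}t\llra(\{n\}\cup s)\mathrel{R}(\{n\}\cup t)$. Construct $M=M_{-1}\supseteq M_0\supseteq M_1\supseteq\cdots$ and $y_0<y_1<\cdots$ recursively: put $y_k=\min M_{k-1}$ and apply the (strengthened) inductive hypothesis to $(\mathcal{F}_{y_k},R_{y_k})$ with ground set $M_{k-1}\setminus\{y_k\}$, obtaining $M_k\in[M_{k-1}\setminus\{y_k\}]^\om$ and an irreducible map $\vp_{y_k}$ on $\mathcal{F}_{y_k}$ that canonizes $R_{y_k}$ on $\{s\in\mathcal{F}_{y_k}:s\sse M_k\}$. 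Setting $Y=\{y_k:k<\om\}$, for any $a\in\mathcal{F}$ with $a\sse Y$ and $\min a=y_k$ we have $a\setminus\{y_k\}\sse M_k$, so $\psi(a):=\vp_{y_k}(a\setminus\{y_k\})$ is a well-defined, inner, Nash-Williams map on $\{a\in\mathcal{F}:a\sse Y,\ \min a=n\}$ canonizing $R$ on that piece, for each fixed $n\in Y$.

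What remains is to splice these pieces across different minima into one irreducible map on $\mathcal{F}|Y$, and this is the heart of the argument. By further Ramsey-theoretic stabilisation --- applications of the Abstract Nash-Williams Theorem (Theorem~\ref{thm.ANW}) used to decide, for members $a,b$ of $\mathcal{F}|Y$ with $\min a\ne\min b$, whether $a\mathrel{R}b$ can hold --- one reduces (after one more shrinking of $Y$) to two cases. If no two members of $\mathcal{F}|Y$ with distinct minima are $R$-related, set $\vp(a)=\{\min a\}\cup\psi(a)$: distinct minima then force distinct, $\sqsubset$-incomparable values, which together with the in-piece behaviour of $\psi$ makes $\vp$ irreducible and canonizing. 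Otherwise the minimum is irrelevant to $R$-relatedness, and then the $\vp_n$ must have been chosen coherently so that $\psi(a)=\psi(b)$ whenever $a\mathrel{R}b$, whereupon $\vp(a):=\psi(a)$ works; taking $M=\om$ at the end gives the theorem. I expect the genuine difficulties to be: (i) arranging, while constructing the $M_k$ and $\vp_{y_k}$, that the maps $\vp_{y_k}$ are mutually coherent --- this rests on there being only countably many relevant combinatorial ``shapes'' of irreducible map on the finite sub-approximations that occur, so that a pigeonhole choice can fix a single shape before the diagonalisation; and (ii) checking that in the irrelevant-minimum case $\vp=\psi$ is still globally Nash-Williams, i.e.\ $\psi(a)\not\sqsubset\psi(b)$ even when $\min a<\min b$, which uses the innerness of each $\vp_n$ together with the coherence and the fact that, after restriction, the first element of a member already separates the subfronts $\mathcal{F}_n$.
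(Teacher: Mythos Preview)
The paper does not prove this theorem; it is quoted from \cite{Pudlak/Rodl82} as background.  The paper's own contribution is the generalization Theorem~\ref{thm.rc} for the spaces $\mathcal{E}_k$, and that proof does \emph{not} proceed by induction on the rank of the front.  Instead it develops the mixing/separating machinery (Definition~\ref{defn.mix}, Lemma~\ref{lem.transmix}), finds a single $B$ deciding every pair $s,t\in\hat{\mathcal{F}}$, extracts for each $s$ a canonical equivalence relation $E_s$ on $1$-extensions (Claim~\ref{claim.4.15}), and defines $\varphi$ directly from the $E_s$.  So your outline and the paper's approach are genuinely different: yours is the classical rank-induction strategy of the original \Pudlak--\Rodl\ paper, while the paper follows the mixing-based scheme of \cite{Dobrinen/Todorcevic14}.

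Your sketch has a real gap at exactly the point you flag.  The asserted dichotomy---``either no two members of $\mathcal{F}|Y$ with distinct minima are $R$-related, or the minimum is irrelevant''---is not something a single Nash-Williams application gives you; it already presupposes that the $\varphi_{y_k}$ have been uniformized.  Concretely, in your Case~2 you need $\psi(a)=\psi(b)$ whenever $a\mathrel{R}b$ with $\min a\neq\min b$, but $\psi(a)=\varphi_{\min a}(a\setminus\{\min a\})$ and $\psi(b)=\varphi_{\min b}(b\setminus\{\min b\})$ are computed by \emph{different} irreducible maps on \emph{different} fronts, produced by independent appeals to the inductive hypothesis.  Nothing in your construction forces these values to agree.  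Your proposed fix (``countably many combinatorial shapes, pigeonhole to fix one'') is the right intuition, but it is not a proof: one must actually formulate what the ``shape'' of an irreducible map is, argue it is determined by a finite amount of data along each branch, and then interleave a stabilization of this shape into the diagonalization producing $Y$ \emph{before} the cross-minimum Ramsey step.  Similarly, the Nash-Williams property of $\varphi=\psi$ in Case~2 (your difficulty~(ii)) does not follow from innerness alone; it needs the uniformized shape to guarantee that $\psi(a)$ and $\psi(b)$ live in the same Nash-Williams family.  Until these steps are written out, the argument is a plausible plan rather than a proof.
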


This  theorem has been generalized to new topological Ramsey spaces in the papers \cite{Dobrinen/Todorcevic14},  \cite{Dobrinen/Todorcevic15}, and \cite{Dobrinen/Mijares/Trujillo14}.
In Section \ref{sec.rct}, we will extend it to the high dimensional Ellentuck spaces.


\section{High dimensional Ellentuck Spaces}\label{sec.defR}

We present here a new hierarchy of topological Ramsey spaces which  generalize the Ellentuck space in a natural manner.
Recall that the {\em Ellentuck space} is the triple $([\om]^{\om},\sse,r)$,
where the finitzation map $r$ is  defined as follows:  for each $X\in[\om]^{\om}$ and $n<\om$,   $r(n,X)$ is the set of   the least $n$ elements of $X$.
We shall let $\mathcal{E}_1$ denote the Ellentuck space.
It was proved by Ellentuck in \cite{Ellentuck74} that $\mathcal{E}_1$ is a topological Ramsey space.
We point out  that the members of $\mathcal{E}_1$  can be identified with the  subsets of $[\om]^{1}$ of  (lexicographical) order-type $\om$.

The first of our new spaces, $\mathcal{E}_2$,  was motivated by the problem of finding the   structure of the Tukey types of ultrafilters Tukey reducible to the generic ultrafilter forced by $\mathcal{P}(\om^2)/\Fin^{\otimes 2}$, denoted by $\mathcal{G}_2$.
In \cite{Blass/Dobrinen/Raghavan13}, it was  proved  that $\mathcal{G}_2$ is neither  maximum  nor minimum in Tukey types of nonprincipal ultrafilters.
However, this left open the question of what exactly is the structure of the Tukey types of ultrafilters Tukey reducible to $\mathcal{G}_2$.
To answer this question (which we do in Theorem \ref{thm.main}), the  first step is to construct  the  second order Ellentuck space $\mathcal{E}_2$, which comprises a dense subset of $((\Fin^{\otimes 2})^+, \sse^{\Fin^{\otimes 2}})$.
Since $\mathcal{P}(\om^2)/\Fin^{\otimes 2}$ is forcing equivalent to 
$((\Fin^{\otimes 2})^+, \sse^{\Fin^{\otimes 2}})$, 
each generic ultrafilter for $(\mathcal{E}_2,  \sse^{\Fin^{\otimes 2}})$
is generic for $\mathcal{P}(\om^2)/\Fin^{\otimes 2}$, and vice versa.
The Ramsey theory available to us through $\mathcal{E}_2$ will aid in finding  the  initial Tukey structure below $\mathcal{G}_2$.

Our  construction of $\mathcal{E}_2$ can be  generalized to find topological Ramsey spaces which are forcing equivalent to the partial orders $\mathcal{P}(\om^k)/\Fin^{\otimes k}$, for each $k\ge 2$.
Each space $\mathcal{E}_k$ is composed of members which are subsets of $[\om]^k$ which, when ordered lexicographically, are seen to have order type exactly the countable ordinal $\om^k$.
For each $k\ge 1$, the members of $\mathcal{E}_{k+1}$ look like $\om$ many copies of the members of $\mathcal{E}_k$.
These spaces will provide the structure needed to crystalize  the initial Tukey structure below the ultrafilters forced by $\mathcal{P}(\om^k)/\Fin^{\otimes k}$,  for each $k\ge 2$
(see Theorem \ref{thm.main}).

We now begin the process of defining the new class of spaces $\mathcal{E}_k$.
We start by defining a  well-ordering on non-decreasing sequences  of members of $\om$ which forms the backbone for the structure of the members in the spaces.
The explanation of why this structure was chosen, and indeed is needed, will follow Definition \ref{def.E_k}.

\begin{defn}[The well-ordered set $(\om^{\not\,\downarrow\le k},\prec)$]\label{defn.prec}
Let $k\ge 2$, and 
let $\om^{\not\,\downarrow\le k}$ denote the collection of  all non-decreasing sequences of members of $\om$ of length less than or equal to $k$.
Let $<_{\mathrm{lex}}$ denote the lexicographic ordering on  $\om^{\not\,\downarrow\le k}$, where we also consider any proper initial segment of a sequence to be lexicographically below that sequence.
Define a well-ordering $\prec$ on 
 $\om^{\not\,\downarrow\le k}$  as follows.
First,
we set the empty sequence $()$ to be the $\prec$-minimum element;
so
 for all nonempty sequences $\vec{j}$ in $\om^{\not\,\downarrow\le k}$, 
we have 
$()\prec \vec{j}$.
In general, 
given  $(j_0,\dots,j_{p-1})$  and 
$(l_0,\dots,l_{q-1})$ in  $\om^{\not\,\downarrow\le k}$ with $p,q\ge 1$,
define $ (j_0,\dots, j_{p-1})\prec(l_0,\dots,l_{q-1})$ if and only if either
\begin{enumerate}
\item
$j_{p-1}<l_{q-1}$, or
\item
$j_{p-1}=l_{q-1}$ and
$ (j_0,\dots, j_{p-1})<_{\mathrm{lex}}(l_0,\dots,l_{q-1})$.
\end{enumerate}
Since  $\prec$  well-orders  $\om^{\not\,\downarrow\le k}$ 
in order-type $\om$, 
we fix the notation of letting $\vec{j}_m$  denote the $m$-th member of $(\om^{\not\,\downarrow\le k},\prec)$.
 For $\vec{l}\in \om^{\not\,\downarrow\le k}$,
we let 
$m_{\vec{l}}\in\om$  denote  {\em the} $m$ such that  $\vec{l}=\vec{j}_{m}$.
In particular, $\vec{j}_0=()$ and $m_{()}=0$.

Let $\om^{\not\,\downarrow k}$ denote the collection of all non-decreasing sequences of length $k$ of members of $\om$.
Note that 
$\prec$ also well-orders $\om^{\not\,\downarrow k}$ in order type $\om$.
Fix the notation of letting $\vec{i}_n$ denote the $n$-th member of $(\om^{\not\,\downarrow k},\prec)$.
\end{defn}

We now define the top member $\bW_k$ of the space $\mathcal{E}_k$.
This set $\bW_k$ is the prototype for all members of $\mathcal{E}_k$ in the sense that every member of $\mathcal{E}_k$ will be a subset of $\bW_k$ which has the same structure as $\bW_k$, defined below.

\begin{defn}[The top member $\bW_k$ of $\mathcal{E}_k$]\label{defn.W_k}
Let $k\ge 2$ be given.
For each $\vec{i}=(i_0,\dots,i_{k-1})\in\om^{\not\,\downarrow  k}$, 
define 
\begin{equation}
\bW_k(\vec{i})=\{m_{\vec{i}\re p}: 1\le p\le k\}.
\end{equation}
Thus, each $\bW_k(\vec{i})$ is a member of $[\om]^k$.
Define
\begin{equation}
\bW_k=\{\bW_k(\vec{i}):\vec{i}\in \om^{\not\,\downarrow k}\}.
\end{equation}
Note that $\bW_k$ is a subset of $[\om]^k$ with order-type $\om^k$, under the lexicographical ordering.
\end{defn}

For $1\le p\le k$, letting $\bW_k(\vec{i}\re p)$ denote $\{m_{\vec{i}\re q}:1\le q\le p\}$, 
and letting $\bW_k( ())=\emptyset$,
we see that 
 $\bW_k$  induces the tree 
$\widehat{\bW}_k=\{\bW_k(\vec{j}):\vec{j}\in\om^{\not\,\downarrow \le k}\}\sse[\om]^{\le k}$
obtained by taking all initial segments of members of $\bW_k$.
The key points about the structure of 
 $\widehat{\bW}_k$ are the following, which will be essential in the next definition:
\begin{enumerate}
\item[(ii)]
For each $m\ge 1$,
$\max(\bW_k(\vec{j}_m))<\max(\bW_k(\vec{j}_{m+1}))$.
\item[(iii)]
For all $\vec{j},\vec{l}\in\om^{\not\,\downarrow \le k}$,
$\bW_k(\vec{j})$ is an initial segment of $\bW_k(\vec{l})$ if and only if $\vec{j}$ is an initial segment of $\vec{l}$.
\end{enumerate}
All members of the space $\mathcal{E}_k$ will have this structure.

\begin{defn}[The spaces $(\mathcal{E}_k,\le,r)$, $k\ge 2$]\label{def.E_k}
For $\vec{j}_m\in\om^{\not\,\downarrow\le k}$, let $|\vec{j}|$ denote
the length of the sequence $\vec{j}$.
We say that $\widehat{X}$ is an {\em $\mathcal{E}_k$-tree} if
$\widehat{X}$ is a  function from $\om^{\not\,\downarrow  \le k}$ into $\widehat{\bW}_k$ such that
\begin{enumerate}
\item[(i)]
For each $m<\om$,
$\widehat{X}(\vec{j}_m)\in[\om]^{|\vec{j}_m|}\cap \widehat{\bW}_k$;
\item[(ii)]
For all $1\le m<\om$,
$\max(\widehat{X}(\vec{j}_m))<\max(\widehat{X}(\vec{j}_{m+1}))$;
\item[(iii)]
For all $m,n<\om$,
$\widehat{X}(\vec{j}_m)\sqsubset \widehat{X}(\vec{j}_n)$ if and only if 
$\vec{j}_m\sqsubset \vec{j}_n$.
\end{enumerate}

For $\widehat{X}$ an $\mathcal{E}_k$-tree, let $[\widehat{X}]$ denote
the function $\widehat{X}\cap (\om^{\not\, \downarrow k}\times\bW_k)$.
Define the space 
$\mathcal{E}_k$ to be the collection of all $[\widehat{X}]$ such that  $\widehat{X}$ is an
$\mathcal{E}_k$-tree.
Thus, $\mathcal{E}_k$ is the space of all functions $X$ from $\om^{\not\, \downarrow k}$ into $\bW_k$ which induce an $\mathcal{E}_k$-tree.

For $X,Y\in\mathcal{E}_k$,  define $Y\le X$ if and only if $\ran(Y)\sse \ran(X)$.
For each $n<\om$, the $n$-th finite aproximation $r_n(X)$  
is $X\cap(\{\vec{i}_p:p<n\}\times\bW_k)$.
As usual, we let
$\mathcal{AR}$ denote the collection $\{r_n(X):X\in\mathcal{E}_k$ and $n<\om\}$.
For $a,b\in\mathcal{AR}$ 
define $a\le_{\mathrm{fin}} b$ if and only if $\ran(a)\sse\ran(b)$.
\end{defn}

\begin{rem}
The  members of $\mathcal{E}_k$ are functions from 
$\om^{\not\, \downarrow k}$ into $\bW_k$ which are obtained by restricting $\mathcal{E}_k$-trees to their maximal nodes.
Each member of $\mathcal{E}_k$ uniquely determines an $\mathcal{E}_k$-tree and vice versa.
We will  identify each member $X$ of $\mathcal{E}_k$ with  its image $\ran(X)=\{X(\vec{i}_n):n<\om\}\sse \bW_k$, as this identification is unambiguous.
In this vein, we may think of $r_n(X)$ as $\{X(\vec{i}_p):p<n\}$.
\end{rem}

Define the projection maps $\pi_l$, $l\le k$,   as follows.
For all  $\vec{j}\in\om^{\not\,\downarrow \le k}$,
define $\pi_0(\{m_{\vec{j}\re q}:1\le q\le |\vec{j}|\})=\emptyset$.
For $1\le l\le k$,
for all   $\vec{j}\in\om^{\not\,\downarrow \le k}$ with $|\vec{j}|\ge l$,
define 
\begin{equation}
\pi_l(\{m_{\vec{j}\re q}:1\le q\le |\vec{j}|\})=\{m_{\vec{j}\re q}:1\le q\le l\}.
\end{equation}
Thus, $\pi_l$ is defined on those members of $\widehat{\bW}_k$ with length at least $l$, and projects to their initial segments of length $l$.

For each $l\le k$, we let $N^k_l$ denote the collection of all $n\in\om$ such that 
given $a\in\mathcal{AR}_n$,
for all $b\in r_{n+1}[a,\bW_k]$, $\pi_l(b(\vec{i}_n))\in\pi_l(a)$,
 but $\pi_{l+1}(b(\vec{i}_n))\not\in\pi_{l+1}(a)$.

\begin{rem}
In defining the spaces $\mathcal{E}_k$, there is a tension between needing 
 the members of $\mathcal{E}_k$ to have  order-type $\om^k$ 
and needing  the finitization map $r$ to  give back any member of $\mathcal{E}_k$ in $\om$ many steps.
Thus, it was necessary to find a way to diagonalize through a set $X\sse[\om]^k$ of order-type $\om^k$ in $\om$ many steps in such a way that the axioms \bf A.1 \rm - \bf A.4 \rm hold.
All the Axioms except for \bf A.3 \rm (b)  could be proved using several different choices for the finitzation map $r$.
However,  the structure of the well-ordering $(\om^{\not\,\downarrow k},\prec)$, the structure of $\bW_k$ given as a template for the members of $\mathcal{E}_k$, and 
conditions (2) and (3) in Definition
\ref{def.E_k}
are precisely what allow us to prove axiom \bf A.3 \rm (b), which will be  proved in Lemma \ref{lem.A.3}.
Interestingly, the Pigeonhole Principle \bf A.4 \rm is actually more straightforward than \bf A.3 \rm to prove for these spaces.
\end{rem}

Before proving that these $\mathcal{E}_k$ form topological Ramsey spaces, we begin with 
 some concrete examples starting with $\mathcal{E}_2$.

\begin{example}[The space $\mathcal{E}_2$]\label{ex.E_2}
The members of $\mathcal{E}_2$ look like $\om$ many copies of the Ellentuck space; that is, each member has order-type $\om\cot\om$, under the lexicographic order.
The well-order $(\om^{\not\,\downarrow \le 2}, \prec)$  begins as follows:
\begin{equation}
()\prec (0)\prec (0,0)\prec (0,1)\prec (1)\prec (1,1)\prec (0,2)\prec (1,2)\prec (2)\prec (2,2)\prec\cdots
\end{equation}

The tree structure of $\om^{\not\,\downarrow \le 2}$, under lexicographic order, looks $\om$ copies of $\om$, and has order type the countable ordinal  $\om^2$ under the  lexicographic ordering.
Here, we picture the finite tree $\{ \vec{j}_m: m<22\}$, which indicates how the rest of the tree $\om^{\not\,\downarrow \le 2}$ is formed.
This is the same as  the tree formed by taking all initial segments of the set $\{\vec{i}_n:n<15\}$.

\begin{figure}[\h]
\centering
{\footnotesize
\begin{tikzpicture}[scale=.7,grow'=up, level distance=40pt,sibling distance=.2cm]
\tikzset{grow'=up}
\Tree [.$()$ [.$(0)$ [.$\rotatebox{45}{(0,0)}$ ][.$\rotatebox{45}{(0,1)}$ ][.$\rotatebox{45}{(0,2)}$ ][.$\rotatebox{45}{(0,3)}$ ]  [.$\rotatebox{45}{(0,4)}$ ]   ] [.$(1)$ [.$\rotatebox{45}{(1,1)}$ ][.$\rotatebox{45}{(1,2)}$ ][.$\rotatebox{45}{(1,3)}$ ][.$\rotatebox{45}{(1,4)}$ ] ] [.$(2)$ [.$\rotatebox{45}{(2,2)}$ ] [.$\rotatebox{45}{(2,3)}$ ][.$\rotatebox{45}{(2,4)}$ ] ] [.$(3)$ [.$\rotatebox{45}{(3,3)}$ ] [.$\rotatebox{45}{(3,4)}$ ] ]  [.$(4)$  [.$\rotatebox{45}{(4,4)}$ ] ] ]
\end{tikzpicture}
}
\caption{$\omega^{\not\downarrow \le 2}$}
\end{figure}

The $\prec$ ordering on $\om^{\not\,\downarrow \le 2}$ determines the nodes in $\widehat{\bW}_2$.
Technically, the maximal nodes in the figure below show $r_{15}(\bW_2)$, which indicates how the rest of $\bW_2$ is formed.

\begin{figure}[\h]
\centering
{\footnotesize
\begin{tikzpicture}[scale=.7,grow'=up, level distance=40pt,sibling distance=.08cm]
\tikzset{grow'=up}
\Tree [.$\emptyset$ [.$\{0\}$ [.$\rotatebox{45}{\{0,1\}}$ ][.$\rotatebox{45}{\{0,2\}}$ ][.$\rotatebox{45}{\{0,5\}}$ ][.$\rotatebox{45}{\{0,9\}}$ ] [.$\rotatebox{45}{\{0,14\}}$ ]] [.$\{3\}$ [.$\rotatebox{45}{\{3,4\}}$ ][.$\rotatebox{45}{\{3,6\}}$ ][.$\rotatebox{45}{\{3,10\}}$ ] [.$\rotatebox{45}{\{3,15\}}$ ]] [.$\{7\}$ [.$\rotatebox{45}{\{7,8\}}$ ] [.$\rotatebox{45}{\{7,11\}}$ ]  [.$\rotatebox{45}{\{7,16\}}$ ]] [.$\{12\}$ [.$\rotatebox{45}{\{12,13\}}$ ] [.$\rotatebox{45}{\{12,17\}}$ ]  ] [.$\{18\}$ [.$\rotatebox{45}{\{18,19\}}$ ]  ] ]
\end{tikzpicture}
}
\caption{$\mathbb{W}_{2}$}
\end{figure}
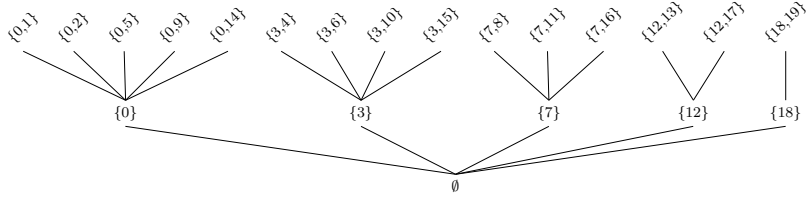

We now present some typical finite approximations to members of $\mathcal{E}_2$.

\begin{figure}[\h]
\centering
{\footnotesize
\begin{tikzpicture}[scale=.7,grow'=up, level distance=40pt,sibling distance=.4cm]
\tikzset{grow'=up}
\Tree [.$\emptyset$ [.$\{3\}$ [.$\{3,6\}$ ][.$\{3,15\}$ ] [.$\{3,36\}$ ] ]
 [.$\{25\}$ [.$\{25,32\}$ ] [.$\{25,40\}$ ]  ] [.$\{42\}$ [.$\{42,51\}$ ] ] ]
\end{tikzpicture}
}
\caption{$r_6(X)$ for a typical $X\in\mathcal{E}_2$}
\end{figure}
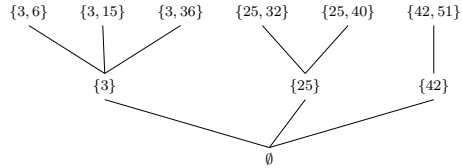

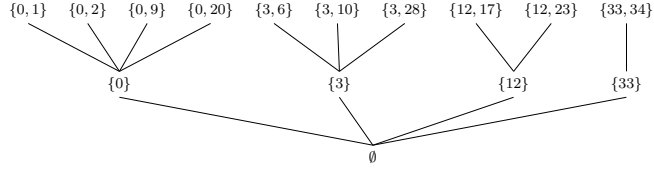
\begin{figure}[\h]
\centering
{\footnotesize
\begin{tikzpicture}[scale=.7,grow'=up, level distance=40pt,sibling distance=.2cm]
\tikzset{grow'=up}
\Tree [.$\emptyset$ [.$\{0\}$ [.$\{0,1\}$ ][.$\{0,2\}$ ][.$\{0,9\}$ ]  [.$\{0,20\}$ ]  ] [.$\{3\}$ [.$\{3,6\}$ ][.$\{3,10\}$ ][.$\{3,28\}$ ]  ]  [.$\{12\}$ [.$\{12,17\}$ ]  [.$\{12,23\}$ ] ]  [.$\{33\}$ [.$\{33,34\}$ ]] ]
\end{tikzpicture}
}
\caption{$r_{10}(X)$ for a typical $X\in\mathcal{E}_2$}
\end{figure}
\end{example}

The following trivial fact is stated, as it is important to seeing that the space in this paper is forcing equivalent to the forcing considered in \cite{Blass/Dobrinen/Raghavan13}.

\begin{fact}\label{fact.dense}
For any set $S\sse [\om]^2$ such that for infinitely many $i\in\pi_1(S)$,
the set $\{j\in\om:\{i,j\}\in S\}$ is infinite,
 there is an $X\in\mathcal{E}_2$ such that $X\sse S$.
\end{fact}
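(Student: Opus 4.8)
The plan is to build $X \in \mathcal{E}_2$ inside $S$ by recursion, mimicking the structure of $\bW_2$ (conditions (i)--(iii) of Definition~\ref{def.E_k}). The hypothesis gives us infinitely many ``good'' first coordinates $i \in \pi_1(S)$, namely those with $\{j : \{i,j\} \in S\}$ infinite; list them as $i_0 < i_1 < i_2 < \cdots$. For each such $i_p$, the set $J_p = \{j \in \om : \{i_p, j\} \in S\}$ is infinite, and by removing at most finitely many elements we may assume $\min J_p > i_p$ so that $\{i_p, j\}$ genuinely has $i_p$ as its smaller element.

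First I would recall that an $X \in \mathcal{E}_2$ is determined by specifying, for each node $\vec{j} \in \om^{\not\downarrow \le 2}$, a set $\widehat{X}(\vec{j}) \in [\om]^{|\vec{j}|}$ respecting the tree order $\sqsubset$ and the increasing-max condition (ii). Concretely for $k=2$: we must choose singletons $\widehat{X}((n))$ for the level-one nodes and pairs $\widehat{X}((n,m))$ (for $n \le m$) extending the corresponding singleton, with the maxima strictly increasing along the enumeration $(\vec{j}_m)_{m<\om}$ of $\om^{\not\downarrow \le 2}$ under $\prec$. Since $\prec$ orders $\om^{\not\downarrow\le 2}$ so that a node's rank is governed primarily by the last entry of the sequence, I would process the nodes in $\prec$-order and at each step pick the next value from the appropriate $J_p$ large enough to exceed all maxima chosen so far; because each $J_p$ is infinite this is always possible. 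Setting $\widehat{X}((p))$ to be a singleton drawn from the good first coordinates (re-indexed so the maxima increase) and $\widehat{X}((p,q))$ to be $\{\,\text{that singleton's value},\ j\,\}$ for a suitable large $j \in J_p$, one checks directly that (i)--(iii) hold, so $[\widehat{X}] =: X$ is a genuine member of $\mathcal{E}_2$, and by construction every pair $X(\vec{i}_n) = \{i_p, j\}$ lies in $S$, i.e.\ $\ran(X) \subseteq S$.

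The only real point requiring care—the ``main obstacle,'' such as it is—is bookkeeping the $\prec$-order against the requirement (ii) that maxima strictly increase: when we move to a node $(p,q)$ with a larger last coordinate $q$ but possibly smaller first coordinate $p$, we must still be able to find an element of $J_p$ exceeding everything used so far. This is exactly why infiniteness of each $J_p$ (for infinitely many $p$) is the right hypothesis: at each of the countably many steps we have discarded only finitely many elements of the relevant $J_p$, so a valid choice remains. I would organize the recursion so that at stage $m$ we define $\widehat{X}(\vec{j}_m)$, maintaining the invariant that all maxima assigned at stages $< m$ are strictly below the one assigned at stage $m$; since this is a finite constraint at each stage and each relevant $J_p$ is infinite, the construction never gets stuck, and the resulting $X$ witnesses the Fact.
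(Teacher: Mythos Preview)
The paper does not give a proof of this fact, describing it simply as ``trivial.'' Your recursion along the $\prec$-enumeration of $\om^{\not\,\downarrow\le 2}$---at each stage choosing either a fresh good first coordinate or a fresh element of the relevant fiber $J_p$, in either case large enough to exceed all previously assigned maxima---is exactly the natural construction and is correct.
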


Let $\om^2$ denote $\om\times\om$ and let $\Fin^{\otimes 2}$ denote  the ideal $\Fin\times\Fin$,
which is
 the collection of all subsets $A$ of $\om\times\om$ such  that for all but finitely many $i\in \om$, the fiber $A(i):=\{j<\om:(i,j)\in A\}$ is finite.
Abusing notation, we also let $\Fin^{\otimes 2}$ denote the ideal on $[\om]^2$ consisting of sets $A\sse[\om]^2$ such that for all but finitely many $i\in\om$, the set $\{j>i:\{i,j\}\in A\}$ is finite.
Given $X,Y\sse [\om]^2$, we write $Y\sse^{\Fin^{\otimes 2}} X$ if and only if  $Y\setminus X\in\Fin^{\otimes 2}$.
We now point out how our space $\mathcal{E}_2$  partially ordered by $\sse^{\mathrm{Fin}^{\otimes 2}}$ is forcing equivalent to 
 $\mathcal{P}(\om^2)/\mathrm{Fin}^{\otimes 2}$.

\begin{prop}\label{prop.forcingequiv}
 $(\mathcal{E}_2,\sse^{\mathrm{Fin}^{\otimes 2}})$ is forcing equivalent to  $\mathcal{P}(\om^2)/\mathrm{Fin}^{\otimes 2}$.
\end{prop}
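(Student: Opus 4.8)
The plan is to show that $(\mathcal{E}_2,\sse^{\Fin^{\otimes 2}})$ is forcing equivalent to $\mathcal{P}(\om^2)/\Fin^{\otimes 2}$ by exhibiting $\mathcal{E}_2$ (equipped with the relation $\sse^{\Fin^{\otimes 2}}$) as a dense suborder of $((\Fin^{\otimes 2})^+,\sse^{\Fin^{\otimes 2}})$, and then invoking the standard fact that $((\Fin^{\otimes 2})^+,\sse^{\Fin^{\otimes 2}})$ is forcing equivalent to the quotient $\mathcal{P}(\om^2)/\Fin^{\otimes 2}$ via the map $S\mapsto [S]$. Here I am using $[\om]^2$ in place of $\om\times\om$, which is harmless: a set $S\sse[\om]^2$ is $\Fin^{\otimes 2}$-positive precisely when there are infinitely many $i$ for which $\{j>i:\{i,j\}\in S\}$ is infinite. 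So the crux is to prove two things: first, every $X\in\mathcal{E}_2$ is $\Fin^{\otimes 2}$-positive; second, $\mathcal{E}_2$ is dense in $((\Fin^{\otimes 2})^+,\sse^{\Fin^{\otimes 2}})$, i.e. for every $\Fin^{\otimes 2}$-positive $S$ there is $X\in\mathcal{E}_2$ with $X\sse S$ (hence $X\sse^{\Fin^{\otimes 2}} S$). The second of these is exactly the content of Fact~\ref{fact.dense}, which I may cite.

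First I would check positivity: given $X\in\mathcal{E}_2$, its induced tree $\widehat{X}$ has $\om$ many distinct level-$1$ nodes $X(\vec{j})$ (the "stems"), and above each such stem $s$ there are infinitely many level-$2$ nodes $s\cup\{t\}$, so for the corresponding $i=\min s$ (after the identification of a two-element set $\{i,j\}$ with $i<j$) the fiber is infinite; thus $X\notin\Fin^{\otimes 2}$. Next, for density, let $S\in(\Fin^{\otimes 2})^+$. The only subtlety is that the ambient positive set $S$ need not be a subset of $\bW_2$, so I cannot literally take a subtree of $\widehat{\bW}_2$ inside $S$; rather I build an $\mathcal{E}_2$-tree abstractly. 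Concretely, $S$ has infinitely many infinite fibers $S(i_0)\supseteq S(i_1)\supseteq\cdots$ wait—rather, pick infinitely many indices $i_0<i_1<\cdots$ each with $S(i_n)$ infinite; for each such $i_n$ choose an infinite subset $F_n\sse S(i_n)$, and then thin the whole configuration so that conditions (i)--(iii) of Definition~\ref{def.E_k} hold, i.e. so that the maxima of the successive two-element sets (and of the successive stems) strictly increase in the order dictated by $\prec$ on $\om^{\not\,\downarrow\le 2}$, and so that the tree/initial-segment structure is respected. This is a routine back-and-forth thinning, and it is exactly Fact~\ref{fact.dense}; so in the write-up I would simply invoke that Fact. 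The resulting $X$ satisfies $X\sse S$, hence $X\le S$ in $\sse^{\Fin^{\otimes 2}}$, giving density.

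Finally I would assemble the pieces. Since $\mathcal{E}_2\sse(\Fin^{\otimes 2})^+$ is dense and the relation $\sse^{\Fin^{\otimes 2}}$ on $\mathcal{E}_2$ is the restriction of the corresponding relation on $(\Fin^{\otimes 2})^+$, the two notions of forcing are equivalent (a dense suborder is forcing equivalent to the whole order, and forcing equivalence is transitive). Combined with the standard identification $((\Fin^{\otimes 2})^+,\sse^{\Fin^{\otimes 2}})\cong\mathcal{P}(\om^2)/\Fin^{\otimes 2}$ (a positive set, a $\subseteq$-decreasing-mod-ideal relation, and a quotient by the ideal are interdefinable in the usual way), we conclude $(\mathcal{E}_2,\sse^{\Fin^{\otimes 2}})$ is forcing equivalent to $\mathcal{P}(\om^2)/\Fin^{\otimes 2}$.

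I expect the only genuine obstacle to be the density argument, i.e. verifying that the thinning procedure can be carried out so that \emph{all} of conditions (i)--(iii) of Definition~\ref{def.E_k} are simultaneously met while remaining inside $S$; but since this is precisely Fact~\ref{fact.dense}, already stated above, in the paper this step is immediate. Everything else (positivity, the dense-suborder lemma, the standard quotient identification) is routine.
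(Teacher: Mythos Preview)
Your proposal is correct and follows essentially the same route as the paper: both argue that $\mathcal{E}_2$ sits densely inside $((\Fin^{\otimes 2})^+,\sse^{\Fin^{\otimes 2}})$, invoke Fact~\ref{fact.dense} for the density step, and then appeal to the standard identification of the positive sets with the quotient Boolean algebra. The paper inserts the intermediate observation that sets of lexicographic order-type $\om^2$ are already $\sse$-dense in $(\Fin^{\otimes 2})^+$ before shrinking to a member of $\mathcal{E}_2$, whereas you go directly from an arbitrary positive set to $\mathcal{E}_2$ via Fact~\ref{fact.dense}; this is a cosmetic difference only.
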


\begin{proof}
It is well-known that  $\mathcal{P}(\om^2)/\mathrm{Fin}^{\otimes 2}$ is forcing equivalent to 
$((\Fin\times\Fin)^+, \sse^{\mathrm{Fin}^{\otimes 2}})$,
where $(\Fin\times\Fin)^+$ is the collection of all subsets $A\sse \om^2$ such that for infinitely many coordinates $i$, the $i$-th fiber of $A$ is infinite.
(See, for instance, \cite{Blass/Dobrinen/Raghavan13}.)
Identifying 
 $\{(i,j):i<j<\om\}$ with $[\om]^2$, we see that 
 the collection of all infinite subsets of $[\om]^2$ with lexicographic order-type exactly $\om^2$ forms a $\sse$-dense subset of $(\Fin\times\Fin)^+$.
Further, for each $Z\sse[\om]^2$ with lexicographic order-type exactly $\om^2$, there is an $X\in\mathcal{E}_2$ such that
$X\sse Z$.
Thus, $(\mathcal{E}_2,\sse^{\Fin^{\otimes 2}})$
is forcing equivalent to 
$\mathcal{P}(\om\times\om)/\mathrm{Fin}^{\otimes 2}$.
\end{proof}

Next we present the specifics of the structure of the space $\mathcal{E}_3$.

\begin{example}[The space $\mathcal{E}_3$]\label{ex.E_3}

The well-order $(\om^{\not\,\downarrow\le 3},\prec)$ begins as follows: 

\begin{align}
\emptyset &\prec (0) \prec (0,0)\prec (0,0,0)\prec (0,0,1)\prec (0,1)\prec (0,1,1)\prec (1)\cr
&\prec (1,1)
\prec (1,1,1)\prec
(0,0,2)\prec (0,1,2)\prec (0,2)\prec 
(0,2,2)
\cr
&\prec (1,1,2)\prec (1,2)\prec (1,2,2)\prec (2)\prec (2,2)\prec (2,2,2)\prec (0,0,3)\prec \cdots
\end{align}

The set $\om^{\not\,\downarrow \le 3}$ is a tree of height three with each non-maximal node branching into $\om$ many nodes.  
The maximal nodes in the following figure  is technically the set  $\{\vec{i}_m:m<20\}$,  which indicates  the structure of $\om^{\not\,\downarrow \le 3}$.

\begin{figure}[\h]
\centering
{\footnotesize
\begin{tikzpicture}[scale=.6,grow'=up, level distance=30pt,sibling distance=.1cm]
\tikzset{grow'=up}
\Tree [.$\emptyset$ [.$(0)$ [.$(0,0)$ [.$\rotatebox{45}{(0,0,0)}$ ][.$\rotatebox{45}{(0,0,1)}$ ][.$\rotatebox{45}{(0,0,2)}$ ] [.$\rotatebox{45}{(0,0,3)}$ ] ][.$(0,1)$ [.$\rotatebox{45}{(0,1,1)}$ ][.$\rotatebox{45}{(0,1,2)}$ ] [.$\rotatebox{45}{(0,1,3)}$ ]][.$(0,2)$ [.$\rotatebox{45}{(0,2,2)}$ ]  [.$\rotatebox{45}{(0,2,3)}$ ]]  [.$(0,3)$ [.$\rotatebox{45}{(0,3,3)}$ ] ]   ][.$(1)$ [.$(1,1)$ [.$\rotatebox{45}{(1,1,1)}$ ][.$\rotatebox{45}{(1,1,2)}$ ] [.$\rotatebox{45}{(1,1,3)}$ ] ] [.$(1,2)$ [.$\rotatebox{45}{(1,2,2)}$ ]  [.$\rotatebox{45}{(1,2,3)}$ ]] [.$(1,3)$ [.$\rotatebox{45}{(1,3,3)}$ ] ] ][.$(2)$ [.$(2,2)$ [.$\rotatebox{45}{(2,2,2)}$ ] [.$\rotatebox{45}{(2,2,3)}$ ] ] [.$(2,3)$ [.$\rotatebox{45}{(2,3,3)}$ ] ] ][.$(3)$  [.$(3,3)$ [.$\rotatebox{45}{(3,3,3)}$ ] ] ]]
\end{tikzpicture}
}
\caption{$\omega^{\not\downarrow \le 3}$}
\end{figure}

Technically, the following figure presents $r_{20}(\bW_3)$, though the intent is to give the reader an idea of the structure of $\bW_3$.

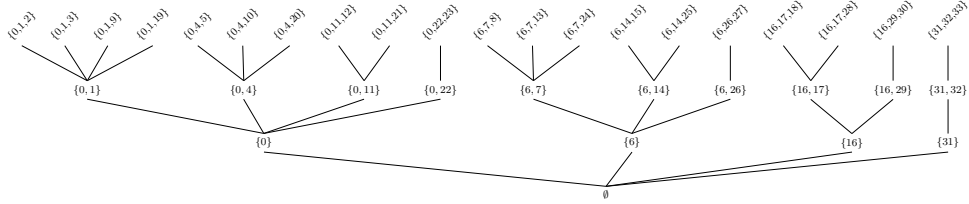
\begin{figure}[\h]
\centering
{\footnotesize
\begin{tikzpicture}[scale=.5,grow'=up, level distance=40pt,sibling distance=.1cm]
\tikzset{grow'=up}
\Tree [.$\emptyset$ [.$\{0\}$ [.$\{0,1\}$ [.$\rotatebox{45}{\{0,1,2\}}$ ][.$\rotatebox{45}{\{0,1,3\}}$ ][.$\rotatebox{45}{\{0,1,9\}}$ ][.$\rotatebox{45}{\{0,1,19\}}$ ]
 ][.$\{0,4\}$ [.$\rotatebox{45}{\{0,4,5\}}$ ][.$\rotatebox{45}{\{0,4,10\}}$ ] [.$\rotatebox{45}{\{0,4,20\}}$ ]][.$\{0,11\}$ [.$\rotatebox{45}{\{0,11,12\}}$ ]  [.$\rotatebox{45}{\{0,11,21\}}$ ]]  [.$\{0,22\}$ [.$\rotatebox{45}{\{0,22,23\}}$ ] ]  ][.$\{6\}$ [.$\{6,7\}$ [.$\rotatebox{45}{\{6,7,8\}}$ ][.$\rotatebox{45}{\{6,7,13\}}$ ][.$\rotatebox{45}{\{6,7,24\}}$ ] ] [.$\{6,14\}$ [.$\rotatebox{45}{\{6,14,15\}}$ ] [.$\rotatebox{45}{\{6,14,25\}}$ ]]  [.$\{6,26\}$ [.$\rotatebox{45}{\{6,26,27\}}$ ] ]][.$\{16\}$ [.$\{16,17\}$ [.$\rotatebox{45}{\{16,17,18\}}$ ]  [.$\rotatebox{45}{\{16,17,28\}}$ ] ] [.$\{16,29\}$ [.$\rotatebox{45}{\{16,29,30\}}$ ]  ] ][.$\{31\}$ [.$\{31,32\}$ [.$\rotatebox{45}{\{31,32,33\}}$ ] ] ] ]
\end{tikzpicture}
}
\caption{$\mathbb{W}_{3}$}
\end{figure}

We next present typical fourth and fifth approximations.

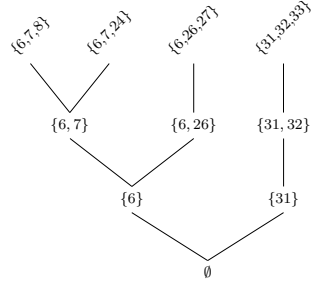
\begin{figure}[\h]
\centering
{\footnotesize
\begin{tikzpicture}[scale=.7,grow'=up, level distance=40pt,sibling distance=.4cm]
\tikzset{grow'=up}
\Tree [.$\emptyset$ [.$\{6\}$ [.$\{6,7\}$ [.$\rotatebox{45}{\{6,7,8\}}$ ][.$\rotatebox{45}{\{6,7,24\}}$ ] ]   [.$\{6,26\}$ [.$\rotatebox{45}{\{6,26,27\}}$ ] ]][.$\{31\}$ [.$\{31,32\}$ [.$\rotatebox{45}{\{31,32,33\}}$ ] ] ] ]
\end{tikzpicture}
}
\caption{$r_4(X)$ for a typical $X\in\mathcal{E}_3$}
\end{figure}

\begin{figure}[\h]
\centering
{\footnotesize
\begin{tikzpicture}[scale=.7,grow'=up, level distance=40pt,sibling distance=.4cm]
\tikzset{grow'=up}
\Tree [.$\emptyset$ [.$\{0\}$ [.$\{0,1\}$ [.$\rotatebox{45}{\{0,1,2\}}$ ][.$\rotatebox{45}{\{0,1,9\}}$ ][.$\rotatebox{45}{\{0,1,34\}}$ ]
 ][.$\{0,11\}$ [.$\rotatebox{45}{\{0,11,12\}}$ ] ]    ][.$\{16\}$ [.$\{16,17\}$ [.$\rotatebox{45}{\{16,17,28\}}$ ] ] ]]
\end{tikzpicture}
}
\caption{$r_5(X)$ for a typical $X\in\mathcal{E}_3$}
\end{figure}

By $\Fin^{\otimes 3}$, we denote $\Fin\otimes\Fin^{\otimes 2}$, which consists of all subsets  $F\sse \om^3$ such that for all  but finitely many $i\in\om$, $\{(j,k):(i,j,k)\in F\}$ is in $\Fin^{\otimes 2}$.
Identifying $[\om]^3$ with $\{(i,j,k)\in\om^3:i<j<k\}$,
we abuse notation and let $\Fin^{\otimes 3}$ on $[\om]^3$ denote the collection of all subsets $F\sse[\om]^3$ such that $\{(i,j,k):\{i,j,k\}\in F\}$ is in $\Fin^{\otimes 3}$ as defined on $\om^3$. 
It is routine to check that $(\mathcal{E}_3,\sse^{\Fin^{\otimes 3}})$ is 
forcing equivalent to $\mathcal{P}(\om^3)/\Fin^{\otimes 3}$.
\end{example}

We shall now show that for each $k\ge 2$, the space $(\mathcal{E}_k,\le,r)$  is a topological Ramsey space; hence, every  subset of $\mathcal{E}_k$  with the property of Baire is Ramsey.
Since   $\mathcal{E}_k$  is a closed subspace of $\mathcal{AR}^{\om}$,
it suffices, by  the Abstract Ellentuck Theorem  (Theorem \ref{thm.AET}), 
to show that  $(\mathcal{E}_k,\le,r)$
 satisfies the axioms \bf A.1 \rm -  \bf A.4\rm.
As it is routine to check that $(\mathcal{E}_k,\le,r)$  satisfies the axioms  \bf A.1 \rm  and  \bf A.2\rm, we leave this to the reader.
We will  show that \bf A.3 \rm holds for $\mathcal{E}_k$ for all $k\ge 2$.
Then we will show by induction on $k\ge 2$ that \bf A.4 \rm holds for $\mathcal{E}_k$.

For each fixed $k\ge 2$,
recall our  convention that $\lgl \vec{i}_n:n<\om\rgl$ is the  $\prec$-increasing enumeration of the well-ordered set $(\om^{\not\,\downarrow k},\prec)$.
Though technically each $a\in\mathcal{AR}$ 
 is a subset of $[\om]^k$, 
we shall abuse notation and use  $\max a$ to denote $\max \bigcup a$.
Recall that for $a\in\mathcal{AR}$ and $X\in\mathcal{E}_k$, 
$\depth_X(a)$ is defined to be the smallest $n$ for which $a\sse r_n(X)$, if $a\sse X$, and $\infty$ otherwise.
As is convention,  $[n,X]$ is used to denote $[r_n(X),X]$.

\begin{lem}\label{lem.A.3}
For each $ k\ge 2$, the space $(\mathcal{E}_k,\le,r)$ satisfies Axiom \bf A.3\rm.
\end{lem}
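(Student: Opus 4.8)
Axiom \textbf{A.3} has two parts, and the plan is to treat them separately. Part (a) asks that if $\depth_B(a)<\infty$, then $[a,A]\ne\emptyset$ for every $A\in[\depth_B(a),B]$. Here I would argue directly: if $\depth_B(a)=n$, then $a\sqsubseteq r_n(B)$, and any $A\in[n,B]$ has $r_n(A)=r_n(B)\supseteq a$ in the sense that $\ran(a)\sse\ran(r_n(A))$. Because $A$ is an $\mathcal{E}_k$-tree (equivalently, $\ran(A)\sse\ran(B)$ and $A$ induces an $\mathcal{E}_k$-tree), one can extend $a$ inside $A$ to a full member of $\mathcal{E}_k$ below $A$: concretely, $a$ occupies an initial block of the well-order $(\om^{\not\downarrow k},\prec)$, and the nodes of $\widehat A$ above this block, taken in $\prec$-order, give a legitimate way to continue, since conditions (ii) and (iii) of Definition \ref{def.E_k} are preserved. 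This produces $Y\in[a,A]$, so $[a,A]\ne\emptyset$.

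For part (b), the harder half, I am given $A\le B$ and $[a,A]\ne\emptyset$, and must find $A'\in[\depth_B(a),B]$ with $\emptyset\ne[a,A']\sse[a,A]$. The idea is: since $[a,A]\ne\emptyset$, in particular $a\sqsubseteq$ some member of $A$, so $a$'s nodes all lie in $\ran(\widehat A)\sse\ran(\widehat B)$; let $n=\depth_B(a)$. I then want to build $A'$ by keeping $r_n(B)$ as its first $n$ approximations and then copying the ``tail'' of $A$ (everything in $A$ that extends $a$, i.e. lies $\prec$-above $a$) into the positions of $A'$ above level $n$. The key point — and this is exactly what the remark before the lemma flags as the reason for the particular choice of $\prec$, $\bW_k$, and conditions (ii),(iii) — is that this copying must yield a genuine $\mathcal{E}_k$-tree: the nodes of $A$ above $a$ must slot in above $r_n(B)$ while preserving the max-increasing condition (ii) and the initial-segment-compatibility condition (iii). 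Condition (iii) is what guarantees the tree shape of the tail of $A$ is compatible with having $r_n(B)$ as its stem (both refine the fixed template $\widehat{\bW}_k$, and the $\prec$-order was designed so that a node's position is governed by its maximum together with its lexicographic data, so the tails can be re-indexed consistently). Having built $A'$, one checks $A'\in[n,B]$ (its first $n$ approximations are those of $B$, and $\ran(A')\sse\ran(B)$), that $[a,A']\ne\emptyset$ (by part (a), since $\depth_B(a)=n$ and $A'\in[n,B]$), and that $[a,A']\sse[a,A]$ (any $Z\le A'$ extending $a$ has its tail inside the tail of $A$, hence $Z\le A$).

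The main obstacle, as anticipated, is verifying that the spliced object $A'$ in part (b) is actually a member of $\mathcal{E}_k$ — that is, that $\widehat{A'}$ satisfies (i)–(iii) of Definition \ref{def.E_k}. This is where the bookkeeping lives: one must check that when the tail of $\widehat A$ (all $\widehat A(\vec j_m)$ with $\widehat A(\vec j_m)$ not an initial segment of, and not contained in, $a$) is re-enumerated on top of $r_n(B)$, the resulting function from $\om^{\not\downarrow\le k}$ to $\widehat{\bW}_k$ still has the property that successive maxima strictly increase and that $\sqsubset$ among images matches $\sqsubset$ among indices. The cleanest way to organize this is to note that both $r_n(B)$'s nodes and $A$'s tail-nodes are subsets of the fixed template $\widehat{\bW}_k$, whose structure (key points (ii),(iii) stated after Definition \ref{defn.W_k}) forces the re-indexing to be order-preserving for $\prec$; then (ii) and (iii) for $A'$ follow from the corresponding properties of $\bW_k$ together with the fact that every node of $r_n(B)$ is $\prec$-below every tail-node of $A$. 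I would isolate this combinatorial fact about $\widehat{\bW}_k$ as the crux and let the rest of part (b) fall out formally.
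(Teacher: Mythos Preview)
Your treatment of part (a) is along the right lines, if vague: one does extend $a$ to some $C\in[a,A]$ by recursively choosing $C(\vec{i}_n)\in A$ for $n\ge|a|$ with the appropriate $\pi_l$-stem, and the paper carries this out explicitly.

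The genuine gap is in part (b). Your plan is to set $r_n(A')=r_n(B)$ (with $n=\depth_B(a)$) and then fill the remaining positions of $A'$ using only the tail of $A$. This cannot work in general, because $r_n(B)$ may contain branches at intermediate levels that are not present in $\widehat{a}$, and $A$ need not have anything extending those branches. Concretely in $\mathcal{E}_2$: take $B=\mathbb{W}_2$ and $a=\{\{3,4\}\}$, so $\depth_B(a)=3$ and $r_3(B)=\{\{0,1\},\{0,2\},\{3,4\}\}$. Choose $A\le B$ with $A(\vec{i}_0)=\{3,4\}$ and $\pi_1(A)\subseteq\{\{3\},\{7\},\{12\},\ldots\}$, so $A$ contains no element of the form $\{0,\cdot\}$. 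Then $a\sse A$ and $A\in[a,A]\ne\emptyset$, yet position $\vec{i}_3$ in $A'$ requires $\pi_1(A'(\vec{i}_3))=\{0\}$, and the tail of $A$ has nothing to offer there. The spliced object you describe simply fails to exist.

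The paper's construction handles this by a case split at each step $n\ge d$: if $n\in N^k_l$ and the required $l$-stem lies in $\pi_l(a)$ (or is a new stem introduced after stage $d$), choose $A'(\vec{i}_n)\in A$; but if the required $l$-stem lies in $\pi_l(r_d(B))\setminus\pi_l(a)$, one must choose $A'(\vec{i}_n)\in B$. The resulting $A'$ mixes $A$ and $B$. The inclusion $[a,A']\sse[a,A]$ then holds because any $X\in[a,A']$ begins with $a$, so every $X(\vec{i}_m)$ has its relevant stem either in $\widehat{a}$ or strictly beyond $r_d(B)$, which are precisely the positions where $A'$ was taken from $A$; hence $X\sse A$. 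Your diagnosis that the crux is ``re-indexing bookkeeping'' misses the real issue: the construction must mix $A$ and $B$, and the content lies in verifying that the $B$-parts of $A'$ are invisible to any member of $[a,A']$.
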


\begin{proof}
To see that \bf A.3 \rm (a) holds,
suppose that $\depth_B(a)=d<\infty$
and $A\in[d,B]$.
Let  $b= r_{d}(B)$.
Then
 $a\sse b$, $\max a=\max b$, and $[b,B]=[d,B]$.
We will recursively build a $C\in[a, A]$, which will show that $[a,A]$ is non-empty.
Let $m=|a|$.
Note that $a\in\mathcal{AR}_m|A$, since $a\sse b$ and $A\in[b,B]$.
Let $c_m$ denote $a$.

Suppose $n\ge m$ and we have already chosen  $c_n\in r_n[a, A]$ such that
 $c_n\sqsupset c_{n-1}$ if $n>m$.
Construct $c_{n+1}\in r_{n+1}[c_n,A]$ as follows.
Let $l$ be the integer less than $k$  such that $n\in N^k_l$.
Recall that 
$n\in N^k_l$
 means that every extension $c'\sqsupset c_n$ has $\pi_l(c'(\vec{i}_n))\in\pi_l(c_n)$, and $\pi_{l+1}(c'(\vec{i}_n))\not\in\pi_{l+1}(c_n)$.
If $l=0$, then  choose  $c_{n+1}(\vec{i}_n)$ to be any member of  $A$ such that $\pi_1(c_{n+1}(\vec{i}_n))>\max c_n$.
Now suppose that $l\ge 1$.
Then letting $p$ be any integer less than $n$ such that  $\vec{i}_p\re l=\vec{i}_n\re l$, we note that 
$\pi_l(c_{n+1}(\vec{i}_n))$ is predetermined to be equal to  the  set 
$\pi_l(c_n(\vec{i}_p))$.
Choose  $c_{n+1}(\vec{i}_n)$ to be any member of  $A$ such that
$\pi_l(c_{n+1}(\vec{i}_n))=\pi_l(c_n(\vec{i}_p))$
and 
$\max c_{n+1}(\vec{i}_n\re (l+1))>\max c_n$.
Define $c_{n+1}$ to be $c_n\cup c_{n+1}(\vec{i}_n)$.

In this manner, we construct a sequence $c_n$, $n\ge m$, such that each $c_{n+1}\in r_{n+1}[c_n,A]$.
Letting $C=\bigcup_{n\ge m} c_n$, we see that $C$ is in $[a,A]$;
hence  \bf A.3 \rm (a) holds.

To see that \bf A.3 \rm(b) holds, 
suppose $A\le B$ and $[a,A]\ne\emptyset$.
We will construct an $A'\in[\depth_B(a),B]$ such that 
$\emptyset\ne[a,A']\sse[a,A]$.
Let $d=\depth_B(a)$ and  let  $a'_d=r_d(B)$.
For each $n\ge d$,  given $a'_n$, 
 we  will  choose $a'_{n+1}\in\mathcal{AR}_{n+1}$
such that 
\begin{enumerate}
\item
 $a'_{n+1}\in r_{n+1}[a'_{n}, B]$;
and 
\item
If $n\in N^k_l$ and $a'_n(\vec{i}_n\re l)\in\pi_l(A)$,
then 
$a'_{n+1}(\vec{i}_n)\in A$.
\end{enumerate}

Let $n\ge d$, and suppose   $a'_n$ has been chosen satisfying (1) and (2).
Choose $a'_{n+1}(\vec{i}_n)$ as follows.
Let $l< k$ be the integer such that  $n\in N^k_l$.
If $l=0$, then choose $a'_{n+1}(\vec{i}_n)$ to be any member of $A$ such that $\max\pi_1(a'_{n+1}(\vec{i}_n))>\max a'_n$.

Suppose now that $l\ge 1$.
We have two cases.

\it Case 1. \rm  $\vec{i}_n\re l=\vec{i}_m\re l$ for some $m<d$.
Then
$a_{n+1}'(\vec{i}_n)$
 must be  chosen so that  $\pi_l(a'_{n+1}(\vec{i}_n))\in\pi_l(b)$.
In the case that 
$a'_d(\vec{i}_m\re l)$ is in $\pi_l(a)$,
then  we can
 choose $a'_{n+1}(\vec{i}_n)\in A$ such that 
$a'_{n+1}(\vec{i}_n\re l)=a'_d(\vec{i}_m\re l)$
and $\max \pi_{l+1}(a'_{n+1}(\vec{i}_n))>\max  a'_n$.
In the case that $a'_d(\vec{i}_m\re l)$ is in $\pi_l(b)\setminus \pi_l(a)$,
 there is no way to choose $a'_{n+1}(\vec{i}_n)$ to be a member of $A$;
 so we choose $a'_{n+1}(\vec{i}_n)$ to be a member of $B$ such that
$a'_{n+1}(\vec{i}_n\re l)=a'_d(\vec{i}_m\re l)$ and
 $\max \pi_{l+1}(a'_{n+1}(\vec{i}_n))>\max a'_n$.

\it Case 2. \rm  $\vec{i}_n\re l\ne\vec{i}_m\re l$ for any $m<d$.
In this case, 
$\pi_l(a'(\vec{i}_n))$ cannot be in $\pi_l(b)$.
Since $l\ge 1$, there must be some $d\le m<n$ such that $\vec{i}_n\re l=\vec{i}_m\re l$.
By our construction,
$a'_n(\vec{i}_m\re l)$  must be in $\pi_l(A)$.
Choose $a'_{n+1}(\vec{i}_n)$ to be any member of $A$ 
such that $\pi_l(a'_{n+1}(\vec{i}_n))=
 \pi _l(a'_n(\vec{i}_m))$ with $\max\pi_{l+1}(a'_{n+1}(\vec{i}_n))>\max a_n$.

Having chosen $a'_{n+1}(\vec{i}_n)$,
let $a'_{n+1}=a'_n\cup \{a'_{n+1}(\vec{i}_n)\}$.
In this manner, we form a sequence $\lgl a'_n:n\ge d\rgl$ satisfying (1) and (2).
Let $A'=\bigcup_{n\ge d} a'_n$.
By construction, $A'$ is a member of $\mathcal{E}_k$ and $A'\in [b,B]$.
Since $a\sse A'$, $\emptyset\ne[a,A']$.

To see that 
$[a,A']\sse[a,A]$, 
let $X$ be any member of $[a,A']$.
For each $m<\om$, let  $n_m$ be such that $X(\vec{i}_m)=A'(\vec{i}_{n_m})$.
We show that $X(\vec{i}_m)\in A$.
Let $l<k$ be such that $m\in N^k_l$.
If 
 $\pi_l(X(\vec{i}_m))\in\pi_l(a)$,
then $A'(\vec{i}_{n_m})$ was chosen to be in $A$; 
thus $X(\vec{i}_m)\in A$.
Otherwise, 
$\pi_l(X(\vec{i}_m))\not\in\pi_l(a)$.
Since $X\sqsupset a$, 
it must be the case that $\min X(\vec{i}_m)>\max a$.
 Thus,
 $A'(\vec{i}_{n_m})$ was chosen to be in $A$;
hence, $X(\vec{i}_m)\in A$.
Therefore, $X\sse  A$, so $X\in [a,A]$.
\end{proof}

\begin{rem}
Our choice of finitization using the structure of the well-ordering $(\om^{\not\,\downarrow \le k},\prec)$ 
was made precisely so that \bf A.3 \rm (b)  could be proved.
In earlier versions of this work, we used larger finitzations so that each member $a\in\mathcal{AR}_m$ would contain precisely $m$ members $a(\vec{i}_n)$ with $n\in N^k_0$.  
This had the advantage that
the ultrafilters constructed  using fronts $\mathcal{AR}_m$ as base sets would be naturally seen as Fubini products of $m$ many ultrafilters.
However,  \bf A.3 \rm   (b) did not hold under that approach, and as such, we had to prove the Abstract Nash-William Theorem  directly from the other three and a half axioms.
Our former approach still provided the initial Tukey structures,  but our finitization in this paper map makes it clear that these new spaces really are generalizations of the Ellentuck space and saves us from some unnecessary redundancy.
Moreover, the approach we use has the advantage of allowing for new generalizations of the \Pudlak-\Rodl\ Theorem to the spaces $\mathcal{E}_k$.
\end{rem}

Towards proving \bf A.4 \rm for $\mathcal{E}_2$, 
we first prove a lemma showing that there are three canonical equivalence relations for 1-extensions on the space $\mathcal{E}_2$.
This fact is already known for the partial ordering $((\Fin^{\otimes 2})^+,\sse)$ (see Corollary 33 in \cite{Blass/Dobrinen/Raghavan13}); we are merely  making it precise in the context of our space $\mathcal{E}_2$.
Given $s\in\mathcal{AR}$, 
we shall say that $t$ is a {\em 1-extension} of $s$ if
 $t\in r_{|s|+1}[s, \bW_2]$.
For $n\in N^2_0$, $s\in\mathcal{AR}_{n}$, and $Y\contains s$, we shall say that a function $f:r_{n+1}[s,Y]\ra\om$ is 
{\em constant on blocks} if
for all  $t,u\in r_{n+1}[s,Y]$,
$f(t)=f(u)$ $\llra$  $\pi_1(t(\vec{i}_n))=\pi_1(u(\vec{i}_n))$.

\begin{lem}[Canonical Equivalence Relations on 1-Extensions in $\mathcal{E}_2$]\label{lem.canon1ext}
Suppose  $n<\om$, $s\in\mathcal{AR}_{n}$
and $s\sse X$, and let
$f:r_{n+1}[s,X]\ra\om$.
Then there is a $Y\in [\depth_X(s),X]$ such that 
 $f\re r_{n+1}[s,Y]$ satisfies exactly one of the following:
\begin{enumerate}
\item
$f\re r_{n+1}[s,Y]$  is one-to-one;
\item
$f\re r_{n+1}[s,Y]$  is constant on blocks;
\item
$f\re r_{n+1}[s,Y]$  is constant.
\end{enumerate}
Moreover,  (2) is impossible if $n\in N^2_1$.
\end{lem}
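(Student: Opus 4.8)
The plan is to use the Abstract Nash-Williams Theorem (Theorem~\ref{thm.ANW}), which we are entitled to once Axioms \textbf{A.1}--\textbf{A.4} are in hand for $\mathcal{E}_2$; note that this lemma is a step \emph{towards} \textbf{A.4}, so strictly speaking we should only invoke the fragment of the machinery already established (\textbf{A.1}--\textbf{A.3} and finite iteration), but the statement is really a finitary Ramsey-theoretic fact about $1$-extensions and can be proved by hand. First I would fix $n$, $s\in\mathcal{AR}_n$, and $X\contains s$, and set $d=\depth_X(s)$. The set $r_{n+1}[s,X]$ of $1$-extensions $t$ of $s$ with $t\le_{\mathrm{fin}}X$ splits into two cases according to whether $n\in N^2_0$ or $n\in N^2_1$. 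When $n\in N^2_1$, every $1$-extension $t$ has $\pi_1(t(\vec{i}_n))$ predetermined (equal to $\pi_1$ of the appropriate earlier node of $s$), and the only freedom is the choice of $\pi_2(t(\vec{i}_n))=\max(t(\vec{i}_n))$, which ranges over the relevant column of $X$. In this case ``constant on blocks'' would force $f$ to be constant on \emph{all} of $r_{n+1}[s,Y]$, i.e.\ alternatives (2) and (3) coincide; this is exactly why (2) is declared impossible for $n\in N^2_1$.

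The core case is $n\in N^2_0$. Here each $1$-extension $t$ is determined by the new ``stem'' value $\pi_1(t(\vec{i}_n))$ together with $\pi_2(t(\vec{i}_n))$; in the template $\bW_2$ these correspond to choosing a new first coordinate $i$ and a larger second coordinate $j>i$. I would think of $f$ as a coloring and apply a two-stage diagonalization. Stage one: for each fixed stem value, $f$ restricted to the $1$-extensions over that stem is a function into $\om$; by the Ellentuck (or pigeonhole) argument applied \emph{within} that column — which is a copy of $\mathcal{E}_1$ — shrink $X$ so that on each surviving stem, $f$ is either one-to-one on that column or constant on that column. Stage two: among the stems on which $f$ is constant, the induced map stem $\mapsto$ (constant value) is itself a coloring of an $\mathcal{E}_1$-like set, so shrink again so that this secondary map is either one-to-one (giving global behavior ``constant on blocks'') or constant (giving alternative (3)); and among stems on which $f$ was one-to-one on each column, a further Ramsey-type thinning arranges that the images of distinct columns are pairwise disjoint, yielding alternative (1). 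Combining, pass to a single $Y\in[d,X]$ realizing exactly one of (1),(2),(3). The bookkeeping that these successive shrinkings can be amalgamated into one $Y\in[\depth_X(s),X]$ respecting the tree structure of $\mathcal{E}_2$ is routine given the structure imposed by Definition~\ref{def.E_k} and the proof of Lemma~\ref{lem.A.3}.

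The main obstacle I expect is purely organizational rather than deep: keeping track of the interaction between the ``stem'' coordinate and the ``top'' coordinate of a $1$-extension while thinning, so that the final $Y$ is genuinely an element of $\mathcal{E}_2$ (its induced tree still satisfies conditions (i)--(iii) of Definition~\ref{def.E_k}) and still lies in $[\depth_X(s),X]$ — in particular that we have not disturbed $r_d(X)=r_d(Y)$ and that $s\sqsubset Y$. The Ramsey content itself reduces, at each stage, to the classical fact (canonical equivalence relations on $1$-extensions in the Ellentuck space, equivalently the finite version of the \Pudlak--\Rodl\ analysis in one dimension, or just repeated applications of the infinite pigeonhole principle) that a coloring of an infinite set by countably many colors, together with a coloring of the color classes, can be made one-to-one or constant at each level. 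I would present the $n\in N^2_0$ argument in full and dispatch $n\in N^2_1$ in a sentence, observing that the ``stem is predetermined'' phenomenon collapses (2) into (3).
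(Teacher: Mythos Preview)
Your proposal is correct and follows essentially the same route as the paper: split into the cases $n\in N^2_1$ and $n\in N^2_0$, and in the latter apply pigeonhole first within each column and then across the surviving stems, invoking Fact~\ref{fact.dense} to realize the thinned set as $r_{n+1}[s,Y]$ for some $Y\in[\depth_X(s),X]$. Your explicit two-stage organization and your remark that a further diagonalization is needed to make the column images pairwise disjoint (for alternative~(1)) is in fact more careful than the paper's own proof, which passes over that step without comment.
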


\begin{proof}
\it Case 1. \rm
$n\in N^2_1$.
Let $\vec{j}_m$ be the member of $\om^{\not\,\downarrow 1}$ such that $\vec{j}_m=\vec{i}_n\re 1$.
Suppose there is an infinite subset $P\sse \om\setminus n$ such that
for each $p\in P$, $\vec{i}_p\re 1=\vec{j}_m$ 
and $f$ is one-to-one on 
$\{s\cup X(\vec{i}_p):p\in P\}$.
Then by Fact \ref{fact.dense},
 there is a $Y\in [\depth_X(s),X]$ such that each $t\in r_{n+1}[s,Y]$ has $t(\vec{i}_n) = X(\vec{i}_p)$ for some $p\in P$.
It follows that $f$ is one-to-one on $r_{n+1}[s,Y]$.
Otherwise, there is an infinite subset $P\sse \om\setminus n$ such that
for each $p\in P$, $\vec{i}_p\re 1=\vec{j}_m$, 
and
 $f$ is constant on 
$\{s\cup X(\vec{i}_p): p\in P\}$.
By Fact \ref{fact.dense}, there is a $Y\in [\depth_X(s),X]$ such that each $t\in r_{n+1}[s,Y]$ has $t(\vec{i}_n) = X(\vec{i}_p)$ for some $p\in P$.
Then $f$ is constant on $r_{n+1}[s,Y]$.

\it Case 2. \rm $n\in N^2_0$.
Suppose  there are infinitely many  $m$ for which  $f$ is one-to-one on the set $\{s\cup X(\vec{i}_p):\vec{i}_p\re 1=\vec{j}_m\}$.
Then there is a $Y\in  [\depth_X(s),X]$ such that $f$ is one-to-one on $r_{n+1}[s,Y]$.

Suppose now that  there are infinitely many 
$m<\om$ for which there is an infinite set $P_m\sse\{p\in\om:\vec{i}_p\re 1=\vec{j}_m\}$ such that $f$ is constant  on the set $\{s\cup X(\vec{i}_p):p\in P_m\}$.
If the value of $f$ on $\{s\cup X(\vec{i}_p):p\in P_m\}$ is different for infinitely many $m$,
then, applying  Fact \ref{fact.dense}, there is a $Y\in [\depth_X(s),X]$ such that $f$ is constant on blocks  on $r_{n+1}[s,Y]$.
If the value of $f$ on $\{s\cup X(\vec{i}_p):p\in P_m\}$ is the same for infinitely many $m$,
then, applying  Fact \ref{fact.dense}, there is a $Y\in [\depth_X(s),X]$ such that $f$ is constant on $r_{n+1}[s,Y]$.
\end{proof}

\begin{lem}[\bf A.4 \rm for $\mathcal{E}_2$]\label{lem.A.4}
Let $a\in\mathcal{AR}_{n}$, 
$X\in\mathcal{E}_2$ such that $X\contains a$,
 and $\mathcal{H}\sse \mathcal{AR}_{n+1}$ be given.
Then there is a $Y\in [\depth_X(a),X]$ such that either 
$ r_{n+1}[a,Y]\sse\mathcal{H}$ or else $r_{n+1}[a,Y]\cap\mathcal{H}=\emptyset$.
\end{lem}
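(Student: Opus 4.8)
The plan is to prove \textbf{A.4} for $\mathcal{E}_2$ by reducing to the trichotomy in Lemma \ref{lem.canon1ext}. Given $a\in\mathcal{AR}_n$, $X\contains a$, and $\mathcal{H}\sse\mathcal{AR}_{n+1}$, define $f:r_{n+1}[a,X]\ra\{0,1\}$ by $f(t)=0$ if $t\in\mathcal{H}$ and $f(t)=1$ otherwise. Since the range of $f$ is finite, the ``one-to-one'' alternative of Lemma \ref{lem.canon1ext} is impossible (unless $r_{n+1}[a,X]$ is already of bounded size, which it is not), so after passing to a suitable $Y\in[\depth_X(a),X]$ we may assume $f\re r_{n+1}[a,Y]$ is either constant or constant on blocks.

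If $f\re r_{n+1}[a,Y]$ is constant, we are done immediately: either every $t\in r_{n+1}[a,Y]$ lies in $\mathcal{H}$, giving $r_{n+1}[a,Y]\sse\mathcal{H}$, or none does, giving $r_{n+1}[a,Y]\cap\mathcal{H}=\emptyset$. So the remaining case is that $n\in N^2_0$ and $f\re r_{n+1}[a,Y]$ is constant on blocks, meaning $f(t)$ depends only on $\pi_1(t(\vec{i}_n))$, i.e.\ only on which ``stem'' node $\{i\}\in\pi_1(Y)$ the new element sits above. Thus $f$ induces a $2$-coloring of the set of stems $\{i\in\om:\{i\}\in\ran(\pi_1(Y))\text{ extends }\pi_1(a)\}$; by the pigeonhole principle one color class is infinite. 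I would then invoke Fact \ref{fact.dense} (in the form used throughout Lemma \ref{lem.canon1ext}) to thin $Y$ to some $Y'\in[\depth_X(a),X]$ all of whose stems above $a$ lie in that infinite monochromatic set, so that $f$ becomes constant on $r_{n+1}[a,Y']$, and conclude as before.

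The one subtlety worth spelling out is making sure the thinning in the ``constant on blocks'' case can be performed while staying inside $[\depth_X(a),X]$ and respecting the tree structure of $\mathcal{E}_2$: we need to select infinitely many stems $\{i\}$ from the chosen color class together with infinitely many leaves above each, and arrange them into a genuine $\mathcal{E}_2$-tree extending $a=r_n(X)$. This is exactly the content of Fact \ref{fact.dense} applied to the set $S=\{\{i,j\}\in\ran(X): i$ is a chosen stem, $\{i,j\}$ extends $a\}$, which meets the hypothesis of Fact \ref{fact.dense} because infinitely many stems were retained and above each retained stem $X$ had infinitely many leaves. I expect this bookkeeping to be the only real obstacle; the combinatorial heart of the argument is entirely supplied by Lemma \ref{lem.canon1ext}, and everything else is routine. (Note also that the moreover-clause of Lemma \ref{lem.canon1ext} tells us the ``constant on blocks'' case cannot even arise when $n\in N^2_1$, so in that case the argument is even shorter.)
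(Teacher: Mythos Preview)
Your proof is correct and follows the same strategy as the paper: define the characteristic function $f:r_{n+1}[a,X]\to\{0,1\}$ of $\mathcal{H}$ and apply Lemma~\ref{lem.canon1ext}. The only difference is that you do one step more than necessary. In the paper's definition, ``constant on blocks'' is a biconditional: $f(t)=f(u)\ \Longleftrightarrow\ \pi_1(t(\vec{i}_n))=\pi_1(u(\vec{i}_n))$, so distinct blocks must receive \emph{distinct} values. Since there are infinitely many blocks and $f$ has only two values, alternative~(2) is ruled out immediately along with~(1), and the paper concludes $f$ is constant without any further thinning. Your extra pigeonhole-and-thin argument handles the weaker reading ``$f$ depends only on the block''; it is sound (and the bookkeeping you flag is routine via Fact~\ref{fact.dense}), just superfluous given the actual definition.
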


\begin{proof}
Define $f: r_{n+1}[a,X]\ra 2$ by $f(t)=0$ if $t\in\mathcal{H}$, and $f(t)=1$ if $t\not\in\mathcal{H}$.
Then there is a $Y\in [\depth_X(a),X]$ satisfying Lemma \ref{lem.canon1ext}.
Since $f$ has only two values, neither (1)  nor (2) of   Lemma \ref{lem.canon1ext} can hold;
so $f$ must be constant on  $r_{n+1}[a,Y]$.
If $f$ is constantly $0$ on  $r_{n+1}[a,Y]$, then $ r_{n+1}[a,Y]\sse\mathcal{H}$;
otherwise, $f$ is constantly $1$ on  $r_{n+1}[a,Y]$, and 
$r_{n+1}[a,Y]\cap\mathcal{H}=\emptyset$.
\end{proof}

\begin{thm}\label{thm.E_2tRs}
$(\mathcal{E}_2,\le, r)$ is a topological Ramsey space.
\end{thm}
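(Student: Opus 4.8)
The plan is to invoke the Abstract Ellentuck Theorem (Theorem~\ref{thm.AET}): since $\mathcal{E}_2$ is by construction a closed subspace of $\mathcal{AR}^{\bN}$, it suffices to verify that the triple $(\mathcal{E}_2,\le,r)$ satisfies the four axioms \textbf{A.1}--\textbf{A.4}. Axioms \textbf{A.1} and \textbf{A.2} are routine bookkeeping about the finitization map $r_n(X) = X\cap(\{\vec{i}_p:p<n\}\times\bW_2)$ and the quasi-order $\le_{\mathrm{fin}}$ defined by inclusion of ranges: \textbf{A.1}(a)--(c) hold because $r_0(X)=\emptyset$, because distinct $X,Y$ differ on some $\vec{i}_n$, and because the enumeration $\lgl\vec{i}_n:n<\om\rgl$ is fixed once and for all; \textbf{A.2}(a) holds because there are only finitely many finite approximations with range contained in a fixed finite set, and \textbf{A.2}(b),(c) are immediate from the definitions. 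These I would state as already checked (the excerpt explicitly says they are left to the reader).

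The substance is supplied by the two lemmas already proved. Axiom \textbf{A.3} for $\mathcal{E}_2$ is exactly the $k=2$ instance of Lemma~\ref{lem.A.3}, so nothing remains to do there. Axiom \textbf{A.4} for $\mathcal{E}_2$ is exactly Lemma~\ref{lem.A.4}. Hence the proof of Theorem~\ref{thm.E_2tRs} is a two-line assembly: cite Lemma~\ref{lem.A.3} and Lemma~\ref{lem.A.4} for \textbf{A.3} and \textbf{A.4}, note that \textbf{A.1} and \textbf{A.2} are straightforward, observe closedness of $\mathcal{E}_2$ in $\mathcal{AR}^{\bN}$, and apply the Abstract Ellentuck Theorem to conclude that every subset of $\mathcal{E}_2$ with the property of Baire is Ramsey and every meager set is Ramsey null, i.e.\ that $(\mathcal{E}_2,\le,r)$ is a topological Ramsey space.

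There is essentially no obstacle at this point, since all the work has been front-loaded into Lemmas~\ref{lem.A.3},~\ref{lem.canon1ext}, and~\ref{lem.A.4}. The only thing one should be slightly careful about is the \emph{closedness} hypothesis of the Abstract Ellentuck Theorem: one must confirm that $\mathcal{E}_2$, viewed inside the Tychonoff product $\mathcal{AR}^{\bN}$ via $X\mapsto(r_n(X))_{n<\om}$, is closed — equivalently, that any sequence of finite approximations $(a_n)_{n<\om}$ with $a_n\sqsubset a_{n+1}$ and each $a_n$ genuinely of the form $r_n$ of some member of $\mathcal{E}_2$ has its union again in $\mathcal{E}_2$. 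This follows because conditions (i)--(iii) defining $\mathcal{E}_2$-trees are each a conjunction of constraints involving only finitely many coordinates, hence are preserved under taking unions of coherent chains; I would mention this in one clause rather than belabor it. If one preferred to be fully self-contained one could instead cite the Abstract Nash--Williams Theorem (Theorem~\ref{thm.ANW}) plus the fact that closedness is automatic here, but the cleanest route is simply the Abstract Ellentuck Theorem as stated.

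\begin{proof}
It is straightforward to verify that $(\mathcal{E}_2,\le,r)$ satisfies Axioms \textbf{A.1} and \textbf{A.2}: part \textbf{A.1}(a) holds since $r_0(X)=\emptyset$ for all $X$, part \textbf{A.1}(b) since distinct members of $\mathcal{E}_2$ differ on some coordinate $\vec{i}_n$, and part \textbf{A.1}(c) since the enumeration $\lgl \vec{i}_n:n<\om\rgl$ of $\om^{\not\,\downarrow 2}$ is fixed; for \textbf{A.2}, the quasi-order $\le_{\fin}$ given by $a\le_{\fin}b\Llra \ran(a)\sse\ran(b)$ satisfies (a) because only finitely many finite approximations have range included in a fixed finite set, while (b) and (c) are immediate from the definitions of $\le$ and $\le_{\fin}$.
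By Lemma \ref{lem.A.3}, $(\mathcal{E}_2,\le,r)$ satisfies Axiom \textbf{A.3}, and by Lemma \ref{lem.A.4} it satisfies Axiom \textbf{A.4}.
Finally, identifying each $X\in\mathcal{E}_2$ with the sequence $(r_n(X))_{n<\om}\in\mathcal{AR}^{\bN}$, the set $\mathcal{E}_2$ is closed in $\mathcal{AR}^{\bN}$: conditions (i)--(iii) in Definition \ref{def.E_k} defining $\mathcal{E}_2$-trees are, coordinate by coordinate, finitary, and hence are preserved under unions of $\sqsubset$-coherent chains of finite approximations.
Therefore, by the Abstract Ellentuck Theorem (Theorem \ref{thm.AET}), every subset of $\mathcal{E}_2$ with the property of Baire is Ramsey and every meager subset is Ramsey null; that is, $(\mathcal{E}_2,\le,r)$ is a topological Ramsey space.
\end{proof}
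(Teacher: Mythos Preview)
Your proposal is correct and follows essentially the same approach as the paper: the paper's proof simply asserts that $(\mathcal{E}_2,\le,r)$ is closed in $\mathcal{AR}^{\om}$, that \textbf{A.1} and \textbf{A.2} are straightforward, cites Lemma~\ref{lem.A.3} for \textbf{A.3} and Lemma~\ref{lem.A.4} for \textbf{A.4}, and then invokes the Abstract Ellentuck Theorem. Your version is slightly more detailed in spelling out why \textbf{A.1}, \textbf{A.2}, and closedness hold, but the structure and content are identical.
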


\begin{proof}
$(\mathcal{E}_2,\le,r)$ is a closed subspace of $\mathcal{AR}^{\om}$.
It is straightforward to check that \bf A.1 \rm and \bf A.2 \rm
 hold.
Lemma \ref{lem.A.3} shows that \bf A.3 \rm holds, and Lemma \ref{lem.A.4} shows that \bf A.4 \rm holds.
Thus, by the Abstract Ellentuck Theorem \ref{thm.AET},
$(\mathcal{E}_2,\le,r)$ is a topological Ramsey space.
\end{proof}

We now begin the inductive process of proving \bf A.4 \rm for $\mathcal{E}_k$, $k\ge 3$.
Let $k\ge 2$ and $1\le l<k$ be given.  
Let $U\sse \bW_k$ be given.
We say that {\em $U$ is isomorphic to a member of $\mathcal{E}_{k-l}$} if its structure is the same as $\bW_{k-l}$.
By this, we mean precisely the following:
Let $P=\{p<\om: \bW_k(\vec{i}_p)\in U\}$, and enumerate $P$ in increasing order as $P=\{p_m:m<\om\}$.
Let  the mapping $\theta:\{ \vec{i}_p:p\in P\}\ra \om^{\not\, \downarrow (k-l)}$ be given by $\theta(\vec{i}_{p_m})$ equals the $\prec$-$m$-th member of $\om^{\not\,\downarrow  (k-l)}$.
Then $\theta$ induces a tree isomorphism, respecting lexicographic order, from  the tree of all initial segments of members of 
 $\{ \vec{i}_p:p\in P\}$ to the tree of all initial segments of members of $(\om^{\not\, \downarrow \le (k-l)},\prec)$.
The next fact  generalizes Fact \ref{fact.dense}
to the $\mathcal{E}_k$, $k\ge 3$, and will be used in the inductive proof of \bf A.4 \rm for the rest of the spaces.

\begin{fact}\label{lem.thin}
Let  $k\ge 2$, 
$ l<k$,
$n\in N^k_l$,
$X\in\mathcal{E}_k$,
and $a\in \mathcal{AR}_n|X$  be given.
\begin{enumerate}
\item
Suppose $l\ge 1$ and $V\sse r_{n+1}[a,X]$ is such that 
$U:=\{b(\vec{i}_n):b\in V\}$
is isomorphic to a member of $\mathcal{E}_{k-l}$.
Then there is a $Y\in [a,X]$ such that $r_{n+1}[a,Y]\sse V$.
\item
Suppose $l=0$ and there is an infinite set $I\sse\{p\ge n: p\in N^k_0\}$ such that 
\begin{enumerate}
\item[(a)]
for all $p\ne q$ in $I$, 
$\vec{i}_p\re 1\ne \vec{i}_q\re 1$, and
\item[(b)]
 for each $p\in I$, there is a set $U_p\sse \{X(\vec{i}_q): q\in \om$ and $\vec{i}_q\re 1=\vec{i}_p\re 1\}$ such that $U_p$ is isomorphic to a member of $\mathcal{E}_{k-1}$.
\end{enumerate}
Then  there is a $Y\in[a,X]$ such that $r_{n+1}[a,Y]\sse\bigcup_{p\in I} U_p$.
\end{enumerate}
\end{fact}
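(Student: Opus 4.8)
The plan is to prove both parts by the same diagonalization strategy: build the desired $Y\in[a,X]$ one approximation at a time, using the structure of $\mathcal{E}_k$-trees and, crucially, the fact that the restriction of $\le_{\fin}$ (equivalently $\sse$ on ranges) lets us freely select among the available $1$-extensions of a node provided we respect the predetermined projections $\pi_l$ and the increasing-maximum condition (ii) of Definition~\ref{def.E_k}. The isomorphism hypothesis on $U$ (resp.\ on each $U_p$) to a member of $\mathcal{E}_{k-l}$ (resp.\ $\mathcal{E}_{k-1}$) is exactly what guarantees that whenever we have committed to an initial segment $\vec{j}$ with $|\vec{j}|=l$ coming from $a$, there are infinitely many admissible continuations available inside $U$ lying above any prescribed finite bound; this is the higher-dimensional analogue of Fact~\ref{fact.dense}, and indeed Fact~\ref{fact.dense} is the base case $k=2$, $l=1$ (and $l=0$).

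For part (1): since $n\in N^k_l$ with $l\ge 1$, every $b\in r_{n+1}[a,X]$ has $\pi_l(b(\vec{i}_n))\in\pi_l(a)$ determined, and $U=\{b(\vec{i}_n):b\in V\}\sse\bW_k$ is isomorphic to a copy of $\mathcal{E}_{k-l}$. Starting from $a$, I would recursively construct $Y\supseteq a$ as an increasing union of approximations $a=c_n,c_{n+1},c_{n+2},\dots$: at stage $q\ge n$, let $l'<k$ be such that $q\in N^k_{l'}$; if the node $\vec{i}_q$ has an initial segment of length $l'$ that was fixed by $a$ and forces $\pi_{l'}(c_{q+1}(\vec{i}_q))$ into $\pi_{l'}(a)$, choose $c_{q+1}(\vec{i}_q)$ to lie in (the copy of $\mathcal{E}_{k-l}$ determined by) $U$ — which is possible because $U$ has the full $\mathcal{E}_{k-l}$ structure and hence infinitely many suitable extensions above $\max c_q$ — and otherwise choose any legal extension from $X$ with $\max c_{q+1}(\vec{i}_q)>\max c_q$. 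Taking $Y=\bigcup_{q\ge n}c_q$, one checks $Y\in\mathcal{E}_k$ and $Y\in[a,X]$, and by construction every $t\in r_{n+1}[a,Y]$ has $t(\vec{i}_n)\in U$, i.e.\ $t\in V$ (here one uses that $\vec{i}_n$ itself gets the index immediately after $a$, so its $\pi_l$-value is the one from $a$ that $V$ witnesses). For part (2) the argument is the same in spirit but now $l=0$, so $\pi_1(c_{q+1}(\vec{i}_q))$ is a brand-new element: along the infinite set $I$ we have, for each $p\in I$, a set $U_p$ isomorphic to a copy of $\mathcal{E}_{k-1}$ sitting above a single first coordinate, and since the first coordinates $\vec{i}_p\re 1$, $p\in I$, are pairwise distinct (hypothesis (a)), we can, when building $Y$, always route the $n$-th node and each later node whose length-$1$ initial segment is new into one of the $U_p$'s, thereby forcing $r_{n+1}[a,Y]\sse\bigcup_{p\in I}U_p$.

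The one genuine subtlety — and the step I expect to require the most care — is the bookkeeping that makes the recursion actually land inside $\mathcal{E}_k$: when at a stage $q$ with $q\in N^k_{l'}$ and $l'\ge 1$ the initial segment $\vec{i}_q\re l'$ was \emph{not} fixed by $a$ but was instead created at some earlier stage $q'$ with $n\le q'<q$, we must make sure the choice we made at stage $q'$ already lies inside the copy of $\mathcal{E}_{k-l}$ (or $\mathcal{E}_{k-1}$), so that its $\pi_{l'}$-extensions are again available inside the appropriate $U$ (or $U_p$). This is handled by the usual fusion device: we fix in advance an enumeration of all the finitely-many "commitments" that can arise and, at each stage, simultaneously extend along all previously-created initial segments, always staying within the prescribed $\mathcal{E}_{k-l}$-isomorphic pieces. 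Concretely, one proves by induction on $q$ the invariant "for every $r\le q$ with $\vec{i}_r\re 1$ new, the set of already-chosen $c_q(\vec{i}_{r'})$ with $\vec{i}_{r'}\re 1=\vec{i}_r\re 1$ is an initial segment of a fixed copy of $\mathcal{E}_{k-1}$ contained in the relevant $U$-type set". Granting this invariant, conditions (i)--(iii) of Definition~\ref{def.E_k} hold for $Y$ by the same verification as in Lemma~\ref{lem.A.3}, and the desired containment $r_{n+1}[a,Y]\sse V$ (resp.\ $\sse\bigcup_{p\in I}U_p$) is immediate. I would present the $l\ge 1$ case in detail and remark that the $l=0$ case is obtained by the same construction with "copy of $\mathcal{E}_{k-1}$ above a fresh first coordinate" in place of "copy of $\mathcal{E}_{k-l}$ above a fixed length-$l$ stem".
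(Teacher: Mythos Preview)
Your approach is the paper's: build $Y\in[a,X]$ recursively from $a$, routing the relevant later nodes into $U$ (respectively into the $U_p$'s) and the others freely into $X$. The paper's proof of (1) is a single sentence --- choose $Y(\vec{i}_p)\in U$ whenever $\vec{i}_p\re l=\vec{i}_n\re l$, and $Y(\vec{i}_p)\in X$ otherwise --- and its proof of (2) is the case-split you describe.

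One genuine imprecision in your part (1): your routing condition ``$\vec{i}_q\re l'$ fixed by $a$'' (where $q\in N^k_{l'}$) is not the right test. The set $U$ sits above the \emph{single} length-$l$ stem determined by $\vec{i}_n\re l$; if $a$ already contains other length-$l$ stems and $\vec{i}_q$ lies above one of those, your condition would direct you to place $Y(\vec{i}_q)$ in $U$, which is impossible since $U$ has the wrong $\pi_l$-value there. The correct condition is simply $\vec{i}_p\re l=\vec{i}_n\re l$. Once you adopt it, your ``bookkeeping subtlety'' collapses to the observation that the indices $\{\vec{i}_p:\vec{i}_p\re l=\vec{i}_n\re l,\ p\ge n\}$, with their induced tree structure on the last $k-l$ coordinates, match the template $\om^{\not\,\downarrow\le(k-l)}$, and hence the isomorphism of $U$ with a member of $\mathcal{E}_{k-l}$ supplies all needed choices with the correct $\pi_{l'}$-constraints and increasing maxima. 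Your invariant is the right way to say this carefully; the paper just asserts it.
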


\begin{proof}
To prove (1),
let $n,X,V,U$ satisfy the hypotheses.
Construct $Y\in[a,X]$ by starting with $a$, and 
choosing successively, for each $p\ge n$,  some $Y(\vec{i}_p)\in X$ such that whenever $\vec{i}_p\re l=\vec{i}_n\re l$, then $Y(\vec{i}_p)\in U$.

To prove (2), start with $a$.
Noting that $n\in N^k_{0}$,
take any $p\in I$ and 
choose $Y(\vec{i}_n)\in U_{p}$ such that 
$\max\pi_{l_n+1}(Y(\vec{i}_n))>\max a$.
Let $y_{n+1}=a_n\cup\{Y(\vec{i}_n)\}$.
Suppose we have chosen $y_m$, for $m\ge n$.
If $m\in N^k_0$, 
then take $p\in I$ such that  $\pi_1(X(\vec{i}_q))>\max y_m$ for each $X(\vec{i}_q)\in U_p$.
Take $Y(\vec{i}_m)$ to be any member of $U_p$.

If $m\in N^k_l$ for some $l>0$, then we have two cases.
Suppose $y_m(\vec{i}_m\re 1)\in\pi_1(a)$.
Then choose $Y(\vec{i}_m)$ to be any member of $X$  
such that,  
for $q<m$ such that $\vec{i}_m\re l=\vec{i}_q\re l$,
$\pi_l(Y(\vec{i}_m))=\pi_l(y_m(\vec{i}_q))$,
and $\max \pi_{l+1}(Y(\vec{i}_m))>\max y_m$.
Otherwise, $y_m(\vec{i}_m\re 1)\not\in\pi_1(a)$.
In this case, let $q<m$ such that $\vec{i}_m\re l=\vec{i}_q\re l$,
and let
 $p$ be such that $\pi_1(y_m(\vec{i}_q))\in\pi_1(U_p)$.
Then
 take $Y(\vec{i}_m)$ to be any member of $U_p$ 
such that the following hold:
$Y(\vec{i}_m\re l)=y_m(\vec{i}_q\re l)$, and $\max \pi_{l+1}(Y(\vec{i}_m))>\max y_m$.
Let $y_{m+1}=y_m\cup \{Y(\vec{i}_m)\}$.

Letting $Y=\bigcup_{m\ge n} y_m$,
we obtain a member of $\mathcal{E}_k$ which satisfies our claim. 
\end{proof}

The following lemma is proved by an induction scheme:
Given that $\mathcal{E}_k$ satisfies the Pigeonhole Principle,
 we  then prove that $\mathcal{E}_{k+1}$ satisfies the Pigeonhole Principle. 
In fact, one can prove this directly, but  induction streamlines the proof.

\begin{lem}\label{lem.pigeonhole_k}
For each $k\ge 3$, $\mathcal{E}_k$ satisfies \bf A.4\rm.
\end{lem}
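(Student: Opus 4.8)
The plan is to argue by induction on $k\ge 3$, assuming that $\mathcal{E}_{k-1}$ satisfies \textbf{A.4}\ (the base case $\mathcal{E}_2$ being Lemma~\ref{lem.A.4}). Fix $a\in\mathcal{AR}_n$ with $a\sse X\in\mathcal{E}_k$, and a coloring $\mathcal{O}\sse\mathcal{AR}_{n+1}$, which I may as well think of as a function $f:r_{n+1}[a,X]\ra 2$ sending $t$ to $0$ iff $t\in\mathcal{O}$. Let $l<k$ be the unique integer with $n\in N^k_l$. The whole argument splits according to whether $l\ge 1$ or $l=0$.

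\emph{The case $l\ge 1$.} Here every $1$-extension $t$ of $a$ is determined by a choice of $t(\vec{i}_n)$ whose length-$l$ projection is already pinned down to be $\pi_l(a(\vec{i}_p))$ for the relevant $p<n$, and whose remaining ``tail'' ranges over a copy of $\mathcal{E}_{k-l}$ sitting inside $X$ (this is exactly the structure isolated before Fact~\ref{lem.thin}). So $f$ restricted to $r_{n+1}[a,X]$ is, after the obvious identification, a $2$-coloring of the $(k-l)$-st-level approximations of a member of $\mathcal{E}_{k-l}$; applying the inductive Pigeonhole Principle (iterated finitely often if $l>1$, or more cleanly, the fact that $\mathcal{E}_{k-l}$ is a topological Ramsey space so its fronts are Ramsey by Theorem~\ref{thm.ANW}) yields a sub-copy $U$ of $\mathcal{E}_{k-l}$ on which $f$ is constant. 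Then Fact~\ref{lem.thin}(1) produces $Y\in[a,X]$ with $r_{n+1}[a,Y]\sse\{$extensions using $U\}$, and on this $Y$ the coloring is homogeneous.

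\emph{The case $l=0$.} Now a $1$-extension adds a brand-new ``root'' $\pi_1(t(\vec{i}_n))$ together with a full subtree below it, so $r_{n+1}[a,X]$ is partitioned into blocks indexed by the new roots, each block being (isomorphic to) the $(k-1)$-level approximations of a member of $\mathcal{E}_{k-1}$. Within each block, the inductive hypothesis for $\mathcal{E}_{k-1}$ gives a sub-copy $U_p\sse\mathcal{E}_{k-1}$ on which $f$ is constant, say with value $\epsilon_p\in 2$. Since there are infinitely many blocks and only two possible values, infinitely many blocks share a common value $\epsilon$; collecting these into an infinite set $I$ satisfying the hypotheses of Fact~\ref{lem.thin}(2) and applying that fact yields $Y\in[a,X]$ with $r_{n+1}[a,Y]\sse\bigcup_{p\in I}U_p$, hence $f\equiv\epsilon$ on $r_{n+1}[a,Y]$. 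Finally one checks $Y\in[\depth_X(a),X]$, as required by the statement of \textbf{A.4}.

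\emph{Main obstacle.} The delicate point is bookkeeping in the $l\ge 1$ case: one must verify that the set of legal $1$-extensions of $a$ really does carry the structure of $\mathcal{E}_{k-l}$ (respecting $\prec$, the max-increasing condition, and the end-extension condition of Definition~\ref{def.E_k}), so that the inductive hypothesis genuinely applies; and when $l\ge 2$ this requires peeling off the predetermined initial segments layer by layer and seeing that what remains is a single lower-dimensional Ellentuck-type space rather than a product. The combinatorics of Fact~\ref{lem.thin} is designed precisely to absorb this, so the argument should reduce cleanly to invoking Fact~\ref{lem.thin} and the inductive hypothesis; I expect the proof to be short once the two cases are separated, with the only real content being the identification of the $1$-extension structure with a member of $\mathcal{E}_{k-l}$ (resp.\ $\mathcal{E}_{k-1}$).
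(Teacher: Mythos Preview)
Your proposal is correct and follows essentially the same route as the paper: induct on $k$, split according to whether $n\in N^k_l$ has $l\ge 1$ or $l=0$, identify the set of $1$-extensions with (copies of) a lower-dimensional $\mathcal{E}_{k-l}$ (resp.\ $\mathcal{E}_{k-1}$), apply the inductive Pigeonhole Principle there, and then invoke Fact~\ref{lem.thin}(1) (resp.\ (2)) to pull the homogeneous subcopy back to a $Y\in[\depth_X(a),X]$. One small correction: the induction should be \emph{strong} (assume \textbf{A.4} for every $\mathcal{E}_j$ with $1\le j<k$), since the $l\ge 2$ subcase requires \textbf{A.4} for $\mathcal{E}_{k-l}$ directly --- your ``iterated finitely often'' suggestion does not literally make sense, and your alternative appeal to Theorem~\ref{thm.ANW} for $\mathcal{E}_{k-l}$ is non-circular only under strong induction (and is in any case overkill: \textbf{A.4} for $\mathcal{E}_{k-l}$ applied with $a=\emptyset$ already yields the monochromatic subcopy).
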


\begin{proof}
By Lemma \ref{lem.A.4}, $\mathcal{E}_2$ satisfies \bf A.4\rm.
Now  assume that $k\ge 2$ and $\mathcal{E}_k$ satisfies \bf A.4\rm.
We will prove that $\mathcal{E}_{k+1}$ satisfies \bf A.4\rm.
Let $X\in\mathcal{E}_{k+1}$,  $a=r_n(X)$, and $\mathcal{O}\sse\mathcal{AR}_{n+1}$.
Let $l<k+1$ be such that $n\in N^{k+1}_l$.

Suppose $l\ge 1$ and let $k'=k+1-l$.
Letting $U$ denote $\{b(\vec{i}_n):b\in r_{n+1}[a,X]\}$, we note that $U$ is isomorphic to a member of $\mathcal{E}_{k'}$.
By the induction hypothesis, \bf A.4 \rm holds for $\mathcal{E}_{k'}$.
It follows that at least one of  $\{b(\vec{i}_n):b\in r_{n+1}[a,X]\cap\mathcal{O}\}$
or 
 $\{b(\vec{i}_n):b\in r_{n+1}[a,X]\setminus\mathcal{O}\}$
contains a set isomorphic to a member of $\mathcal{E}_{k'}$.
By Fact \ref{lem.thin} (1), 
there is a
 $Y\in[a,X]$ such that either  $r_{n+1}[a,Y]\sse \mathcal{O}$ or else 
$r_{n+1}[a,Y]\sse \mathcal{O}^c$.

Suppose now that  $l=0$.
Take $I$ to consist of those $p\ge n$ for which $\vec{i}_p\re 1>\vec{i}_q\re 1$ for all $q<p$.
Then $I$ is infinite.
Moreover, 
for each  $p\in I$, 
letting $I_p:=\{q\ge p:\vec{i}_q\re 1=\vec{i}_p\re 1\}$,
we have that 
$\{X(\vec{i}_q):q\in I_p\}$
 is isomorphic to a member of $\mathcal{E}_k$.
Thus, for each $p\in I$, at least one of $\{X(\vec{i}_q):q\in I_p \}\cap\{b(\vec{i}_n):b\in r_{n+1}[a,X]\cap \mathcal{O}\}$
or $\{X(\vec{i}_q)   :q\in I_p  \}\cap\{b(\vec{i}_n):b\in r_{n+1}[a,X]\cap\mathcal{O}^c\}$
contains a subset which is isomorphic to a member of $\mathcal{E}_k$.
Take one and call it $U_p$.
Thin $I$ to an infinite subset $I'$ for which either $U_p\sse
\{b(\vec{i}_n):b\in r_{n+1}[a,X]\cap
\mathcal{O}\}$ for all $p\in I'$, or else
$U_p\sse
\{b(\vec{i}_n):b\in r_{n+1}[a,X]\cap\mathcal{O}^c\}$ for all $p\in I'$.
By  Fact \ref{lem.thin} (2), 
there is a $Y\in[a,X]$ such that  $r_{n+1}[a,Y]\sse \bigcup_{p\in I'}U_p$.
Thus, $Y$ satisfies \bf A.4\rm.
\end{proof}

From Theorem \ref{thm.E_2tRs} and  Lemmas \ref{lem.A.3} and \ref{lem.pigeonhole_k}, we obtain the following theorem.

\begin{thm}
For each $2\le k<\om$, $(\mathcal{E}_k,\le r)$ is a topological Ramsey space.
\end{thm}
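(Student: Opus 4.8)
The plan is essentially to assemble the pieces already in place. The statement is that for each $2 \le k < \om$, the triple $(\mathcal{E}_k, \le, r)$ is a topological Ramsey space, and the strategy is to invoke the Abstract Ellentuck Theorem (Theorem \ref{thm.AET}): it suffices to check that $\mathcal{E}_k$ is a closed subspace of $\mathcal{AR}^{\om}$ and that it satisfies axioms \textbf{A.1} through \textbf{A.4}. Closedness and axioms \textbf{A.1}, \textbf{A.2} are routine from the definitions (and the paper has already said these are left to the reader), so I would dispatch them in a sentence. Axiom \textbf{A.3} holds for every $\mathcal{E}_k$ with $k \ge 2$ by Lemma \ref{lem.A.3}. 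For \textbf{A.4}, the base case $k = 2$ is Lemma \ref{lem.A.4} (equivalently Theorem \ref{thm.E_2tRs}), and the inductive step for $k \ge 3$ is exactly Lemma \ref{lem.pigeonhole_k}. So the proof is a three-line citation argument.

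The only mild subtlety worth a sentence is bookkeeping: Theorem \ref{thm.E_2tRs} already records the $k=2$ case as a topological Ramsey space, and Lemma \ref{lem.pigeonhole_k} is stated for $k \ge 3$, so the combined statement covers all $k$ in the stated range once one observes that Lemmas \ref{lem.A.3} and \ref{lem.pigeonhole_k} together give \textbf{A.3} and \textbf{A.4} for every $k \ge 3$, while $k = 2$ is handled by the earlier theorem. There is no real obstacle here — all the hard work (the delicate construction of $A'$ in \textbf{A.3}(b), and the induction on dimension for the Pigeonhole Principle using Fact \ref{lem.thin}) has been done in the preceding lemmas. I would simply write:

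\begin{proof}
For $k=2$, this is Theorem \ref{thm.E_2tRs}.
Now let $k\ge 3$.
Since $\mathcal{E}_k$ is a closed subspace of $\mathcal{AR}^{\om}$, by the Abstract Ellentuck Theorem (Theorem \ref{thm.AET}) it suffices to check that $(\mathcal{E}_k,\le,r)$ satisfies \textbf{A.1} -- \textbf{A.4}.
Axioms \textbf{A.1} and \textbf{A.2} are routine to verify directly from Definition \ref{def.E_k}.
Axiom \textbf{A.3} holds by Lemma \ref{lem.A.3}, and Axiom \textbf{A.4} holds by Lemma \ref{lem.pigeonhole_k}.
Therefore $(\mathcal{E}_k,\le,r)$ is a topological Ramsey space.
\end{proof}
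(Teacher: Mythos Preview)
Your proposal is correct and matches the paper's own argument essentially verbatim: the paper simply states that the theorem follows from Theorem \ref{thm.E_2tRs} and Lemmas \ref{lem.A.3} and \ref{lem.pigeonhole_k}, which is exactly the citation structure you give.
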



\section{Ramsey-classification theorems}\label{sec.rct}

In this section, we show that in each of the spaces $\mathcal{E}_k$, $k\ge 2$, the  analogue of the \Pudlak-\Rodl\ Theorem holds.
Precisely, we show  in Theorem \ref{thm.rc}
that   each equivalence relation on any given front on $\mathcal{E}_k$  is canonical when restricted to some member of $\mathcal{E}_k$.
  (See Definitions \ref{def.innerNW} and \ref{defn.canon}.)

Let $k\ge 2$ be fixed.
We begin with some basic notation, definitions and facts which will aid in the proofs.
From now on, we  routinely use the following abuse of notation.
\begin{notation}
For $X\in\mathcal{E}_k$ and $n<\om$, we shall use $X(n)$ to denote $X(\vec{i}_n)$.
\end{notation}

 We will often want to consider  the set of all $Y$ into which  a given finite approximation $s$ can be extended, even though $Y$ might not actually contain $s$. 
Thus, we define the following notation.

\begin{notation}\label{defn.ext}
Let  $s,t\in\mathcal{AR}$ and $X\in\mathcal{R}$.
Define
$\Ext(s)=\{Y\in\mathcal{R}:s \sse Y\}$,
and let 
$\Ext(s,t)$ denote $\Ext(s)\cap\Ext(t)$.
Define $\Ext(s,X)=\{Y\le X: Y\in\Ext(s)\}$,
and let $\Ext(s,t,X)$ denote $\Ext(s,X)\cap\Ext(t,X)$.

Define
$X/s=\{X(n): n<\om$ 
and   $\max X(n)>\max s\}$ and 
$a/s=\{a(n): n<|a|$ and $\max a(n)> \max s\}$.
Let $[s,X/t]$ denote $\{Y\in\mathcal{R}:s\sqsubset Y$ and $Y/s\sse X/t\}$.

Let $r_{n}[s,X/t]$ be $\{a\in\mathcal{AR}_{n}:a\sqsupseteq s$ and $a/s\sse X/t\}$.
For $m=|s|$,
let $r[s,X/t]$ denote  $\bigcup\{r_{n}[s,X/t]:  n\ge m\}$.
Let $\depth_X(s,t)$ denote $\max \{\depth_X(s),\depth_X(t)\}$.
\end{notation}

$\Ext(s,X)$ is the set of all $Y\le X$ {\em into} which $s$ can be extended to a member of $\mathcal{R}$.
Note that $Y\in\Ext(s,X)$  implies that there is a $Z\in\mathcal{R}$ such that $s\sqsubset Z$ and $Z/s\sse Y$.

\begin{fact}\label{fact.useful}
Suppose  $Y\le X\in \mathcal{E}_k$ and $c\sse c'$ in $\mathcal{AR}$ are given  with $\max c=\max c'$, $c\le_{\fin} Y$, and $c'\le_{\fin} X$.
Then there is a $Y'\in [c',X]$ such that
for any $s\le_{\mathrm{fin}} c$ and any $a\in r[s,Y'/c]$, 
$a/c\sse Y$.
\end{fact}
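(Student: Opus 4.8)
The statement is a ``transfer'' lemma: given $Y\le X$ and a pair $c\sqsubseteq c'$ with common maximum, one wants to thin $X$ below $c'$ so that any extension of an initial piece $s$ of $c$, carried out inside the new condition, actually lands in $Y$ past $c$. The plan is to build $Y'$ by the same kind of step-by-step diagonalization used in the proof of Lemma~\ref{lem.A.3}(b), recursively choosing $a'_m(\vec{i}_m)$ for $m\ge d:=\depth_X(c')$, where we start with $a'_d=r_d(X)$. The key point controlling the construction is the same case analysis driven by which $N^k_l$ contains the index $m$: if $m\in N^k_0$ (a ``stem'' node), the new node $Y'(\vec{i}_m)$ is essentially free, so we choose it inside $Y$ with $\pi_1$-value above $\max a'_m$; if $m\in N^k_l$ with $l\ge 1$, then $\pi_l(Y'(\vec{i}_m))$ is predetermined by an earlier node $Y'(\vec{i}_q)$ with $\vec{i}_q\re l=\vec{i}_m\re l$, and we choose $Y'(\vec{i}_m)$ inside $Y$ agreeing on that projection with $\max\pi_{l+1}(Y'(\vec{i}_m))>\max a'_m$ whenever the relevant stem already lies in $\pi_1(Y)$, falling back to a node of $X$ only when the stem was already frozen inside $c'$ (i.e.\ $\pi_1$ of the relevant node is in $\pi_1(c')\setminus\pi_1(c)$). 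Since $c\le_{\fin}Y$, every stem of $c$ itself lies in $\pi_1(Y)$, which is exactly what makes the ``good'' branch of the case split available for indices built over $c$.

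\textbf{Order of steps.} First I would fix $d=\depth_X(c')$ and observe $c\le_{\fin} c'=r_d(X)$, so that $[c',X]=[d,X]$ and $c/c'=\emptyset$; this reduces the problem to constructing $Y'\in[c',X]$. Second, I would run the recursion just described, maintaining at stage $m$ the invariant: (i) $a'_{m+1}\in r_{m+1}[a'_m,X]$, and (ii) if the ``stem'' $\pi_1$-value governing $\vec{i}_m$ is already in $\pi_1(Y)$ (in particular, whenever that stem is one of the stems of $c$), then $Y'(\vec{i}_m)\in Y$. Third, set $Y'=\bigcup_{m\ge d}a'_m$ and check $Y'\in\mathcal{E}_k$ and $Y'\in[c',X]$ from the conditions (2),(3) of Definition~\ref{def.E_k}, exactly as in Lemma~\ref{lem.A.3}. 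Fourth, verify the conclusion: take $s\le_{\fin}c$ and $a\in r[s,Y'/c]$, so every node of $a/c$ is some $Y'(\vec{i}_m)$ with $\max Y'(\vec{i}_m)>\max c$; since $s\sqsubset$ the member witnessing $a$, and $s\le_{\fin}c$, the stem governing each such node is a stem of $c$ (hence in $\pi_1(Y)$) or strictly above $\max c$ and chosen inside $Y$ by the construction — in either case invariant (ii) gives $Y'(\vec{i}_m)\in Y$, so $a/c\sse Y$.

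\textbf{Main obstacle.} The delicate point is the interaction between $s$, $c$, and $c'$: because $c\subsetneq c'$ is permitted, some stems available below $c'$ are \emph{not} stems of $c$, and over those stems we cannot in general land inside $Y$ — this is why the lemma only promises $a/c\sse Y$ for $a\in r[s,Y'/c]$ with $s\le_{\fin}c$, i.e.\ $a$ is built from nodes of genuine $c$-depth or from freshly chosen stems above $\max c$. I would need to be careful that the phrase $r[s,Y'/c]$ forces $a/c$ to avoid exactly the frozen-but-not-in-$c$ stems: a node of $a$ extending such a stem would have max $\le\max c'=\max c$, hence lies in $c$ and not in $a/c$. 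Making this book-keeping airtight — matching ``$a/c\sse Y'/c$'' against ``stem in $\pi_1(Y)$'' through the $N^k_l$ case split — is the real content; the recursion itself is a routine adaptation of Lemma~\ref{lem.A.3}(b).
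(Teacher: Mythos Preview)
Your overall strategy matches the paper's: build $Y'$ recursively above the given initial segment, and at each stage $n\in N^k_l$ decide whether to pick $Y'(n)$ from $Y$ or merely from $X$. However, there are two concrete gaps.

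First, the setup is off. You set $d=\depth_X(c')$ and assert $c'=r_d(X)$ and $[c',X]=[d,X]$; neither holds in general, since $c'$ is only assumed to satisfy $c'\le_{\fin}X$, not $c'\sqsubset X$. The paper instead takes $d=|c'|$ and sets $r_d(Y')=c'$, so that $Y'\in[c',X]$ by construction. Your recursion starting from $a'_d=r_d(X)$ produces $Y'\in[r_d(X),X]$, which need not lie in $[c',X]$. This is easily repaired, but as written it does not give the stated conclusion.

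Second, and more substantively, your case split is governed by $\pi_1$ (the ``stem''), whereas the paper's is governed by $\pi_l$ at the relevant level: choose $Y'(n)\in Y$ iff $l=0$ or the predetermined $\pi_l$-value lies in $\pi_l(c)$, and choose $Y'(n)\in X$ otherwise. For $k\ge 3$ this difference matters. Suppose $m\in N^k_l$ with $l\ge 2$, and the predetermined value $\pi_l(Y'(m))$ lies in $\pi_l(c')\setminus\pi_l(c)$ while its stem lies in $\pi_1(c)$; this happens whenever $c'\setminus c$ contains a node sharing a stem with some node of $c$ but branching off at an intermediate level. Your rule says to choose $Y'(m)\in Y$, but you cannot: you need a node of $Y$ with that fixed $\pi_l$-value, and all you know is $c\subseteq Y$, which supplies no such node. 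Thus the construction is not well-defined under your criterion. Your invariant (ii) and your final verification paragraph both track only stems, so they miss exactly this intermediate-level obstruction; the ``main obstacle'' you identify is real, but it lives at every level $l\le k$, not just at level $1$.
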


\begin{proof}
The proof is by the sort of  standard construction we have done in previous similar arguments.
Let $d=|c'|$, and let $r_d(Y')=c'$.
For $n\ge d$, having chosen $r_n(Y')$,
 let $l<k$ be such that $n\in  N^k_l$ and choose $Y'(n)$ as follows.
\begin{enumerate}
\item
If $l\ge 1$,
\begin{enumerate}
\item[(i)]
if $c'(\vec{i}_n\re l)\in\pi_l(c)$, then choose $Y'(n)\in Y$;
\item[(ii)]
if $c'(\vec{i}_n\re l)\not\in\pi_l(c)$, then choose $Y'(n)\in X$.
\end{enumerate}
\item
If $l=0$,
then choose $Y'(n)\in Y$.
\end{enumerate}
Then $Y'$ satisfies the conclusion.
\end{proof}

Recall Definition \ref{def.frontR1} of front on a topological Ramsey space from Section \ref{sec.reviewtRs}.

\begin{defn}\label{defn.mix}
Let $\mathcal{F}$ be a  front on $\mathcal{E}_k$ and let $f:\mathcal{F}\ra\om$.
Let $\hat{\mathcal{F}}=\{r_{n}(a):a\in\mathcal{F}$ and $n\le |a|\}$.
Suppose  $s,t\in\hat{\mathcal{F}}$ 
and $X\in\Ext(s,t)$.
We say that $X$ {\em separates} $s$ and $t$ if and only if
for all  $a\in\mathcal{F}\cap r[s,X/t]$ and $b\in \mathcal{F}\cap r[t,X/s]$, $f(a)\ne f(b)$.
We say that $X$ {\em mixes} $s$ and $t$ if and only if no $Y\in\Ext(s,t,X)$ separates $s$ and $t$.
We say that $X$ {\em decides} for $s$ and $t$ if and only if either $X$ mixes $s$ and $t$ or else $X$ separates $s$ and $t$.
\end{defn}

Note that mixing and separating of $s$ and $t$ only are defined for $X\in\Ext(s,t)$.
Though we could extend this to all $X$ in $\mathcal{E}_k$ by declaring $X$ to separate $s$ and $t$ whenever $X\not\in\Ext(s,t)$, this is unnecessary, as it will not be relevant to our construction.
Also note that $X\in\Ext(s,t)$  mixes $s$ and $t$ if and only if
 for each $Y\in\Ext(s,t,X)$, there are $a\in\mathcal{F}\cap r[s,Y/t]$ and $b\in\mathcal{F}\cap  r[t,Y/s]$ for which $f(a)=f(b)$.

\begin{fact}\label{fact.equivformsmix}
The following are equivalent for $X\in\Ext(s,t)$:
\begin{enumerate}
\item
$X$ mixes $s$ and $t$.
\item
For all $Y\in\Ext(s,t,X)$,
there are $a\in\mathcal{F}\cap r[s,Y/t]$ and $b\in\mathcal{F}\cap  r[t,Y/s]$ for which $f(a)=f(b)$.
\item
For all $Y\in[\depth_X(s,t),X]$, there are  $a\in\mathcal{F}\cap r[s,Y/t]$ and $b\in\mathcal{F}\cap  r[t,Y/s]$ for which $f(a)=f(b)$.
\end{enumerate}
\end{fact}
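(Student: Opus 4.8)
The plan is to prove the three equivalences by a cycle of implications, $(1)\Rightarrow(2)\Rightarrow(3)\Rightarrow(1)$, exploiting the fact that the notions of mixing and separating are monotone under refinement of the witnessing condition. First I would observe that $(1)\Leftrightarrow(2)$ is essentially the content of the remark immediately preceding the statement: $X$ mixes $s$ and $t$ iff no $Y\in\Ext(s,t,X)$ separates $s$ and $t$, and ``$Y$ does not separate $s$ and $t$'' unpacks to ``there exist $a\in\mathcal{F}\cap r[s,Y/t]$ and $b\in\mathcal{F}\cap r[t,Y/s]$ with $f(a)=f(b)$''; since every $Y\in\Ext(s,t,X)$ itself has $\Ext(s,t,Y)\subseteq\Ext(s,t,X)$, saying no such $Y$ separates is the same as saying every such $Y$ fails to separate, which is exactly $(2)$. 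So the real work is the passage between quantifying over $\Ext(s,t,X)$ and quantifying over $[\depth_X(s,t),X]$.

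For $(2)\Rightarrow(3)$: any $Y\in[\depth_X(s,t),X]$ satisfies $Y\le X$, $s\sqsubseteq r_{\depth_X(s)}(Y)$ and $t\sqsubseteq r_{\depth_X(t)}(Y)$ because $r_{\depth_X(s,t)}(Y)=r_{\depth_X(s,t)}(X)$ contains both $s$ and $t$; hence $Y\in\Ext(s,t,X)$, and $(2)$ applied to this $Y$ gives exactly the conclusion of $(3)$. (One must check $r[s,Y/t]$ and $r[t,Y/s]$ are being used consistently here, but since $Y$ genuinely contains $s$ and $t$ as initial segments this is immediate.)

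For $(3)\Rightarrow(1)$, which I expect to be the main obstacle, I would argue by contraposition: suppose $X$ does not mix $s$ and $t$, so some $Y\in\Ext(s,t,X)$ separates $s$ and $t$. The difficulty is that $Y$ need not contain $s$ and $t$ as initial segments — it is only a condition \emph{into} which $s$ and $t$ can be extended — so I cannot directly feed $Y$ into $(3)$. The fix is to use Fact~\ref{fact.useful} to build, from the separating $Y$ together with a common finite approximation $c$ realizing both $s$ and $t$ inside some member of $\mathcal{E}_k$, a genuine $Y'\in[c',X]$ (with $c'\supseteq c$, $\max c'=\max c$, $c'\le_{\fin}X$) such that for every $s'\le_{\fin}c$ and every $a\in r[s',Y'/c]$ we have $a/c\subseteq Y$; applying this with $s'=s$ and $s'=t$ shows that all the front elements $a\in\mathcal{F}\cap r[s,Y'/c]$ and $b\in\mathcal{F}\cap r[t,Y'/c]$ lie (modulo $c$) inside $Y$, so they inherit $Y$'s separation property, i.e. $f(a)\ne f(b)$ for all such pairs. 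Passing to $Y''\in[\depth_X(s,t),X]$ obtained from $Y'$ (again realizing $s$ and $t$ as initial segments, which we may arrange since $c',c$ contain them), we get a member of $[\depth_X(s,t),X]$ on which no matching pair exists, contradicting $(3)$. The bookkeeping about which common approximation $c$ to choose and ensuring $\depth_X(s,t)$ lines up correctly with $c'$ is the fiddly part, but it is the standard kind of construction already used repeatedly in the paper, so I would present it at that level of detail rather than spelling out every index.
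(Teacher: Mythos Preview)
Your approach is correct and essentially the same as the paper's. The paper also gets $(1)\Leftrightarrow(2)$ from the definition, $(2)\Rightarrow(3)$ from the containment $[\depth_X(s,t),X]\subseteq\Ext(s,t,X)$, and uses Fact~\ref{fact.useful} for the remaining implication. The only cosmetic difference is that the paper proves $(3)\Rightarrow(2)$ directly rather than $(3)\Rightarrow(1)$ by contraposition: given $Y\in\Ext(s,t,X)$, it sets $c=r_{\depth_X(s,t)}(Y)$ and invokes Fact~\ref{fact.useful} to produce $Y'\in[\depth_X(s,t),X]$ with $r[s,Y'/t]\subseteq r[s,Y/t]$ and $r[t,Y'/s]\subseteq r[t,Y/s]$; then $(3)$ applied to $Y'$ yields the witnessing pair for $Y$. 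Your contrapositive version is the same argument read backwards, and your extra ``passing to $Y''$'' step is unnecessary once $c'$ is chosen so that $[c',X]=[\depth_X(s,t),X]$.

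Two small wording issues: in your $(2)\Rightarrow(3)$ you write $s\sqsubseteq r_{\depth_X(s)}(Y)$ and later say $Y$ ``contains $s$ and $t$ as initial segments''; neither is correct, since $s,t$ are merely \emph{subsets} of $r_{\depth_X(s,t)}(Y)=r_{\depth_X(s,t)}(X)$, not initial segments. This does not affect the argument, since $\Ext(s)$ only requires $s\subseteq Y$.
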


\begin{proof}
(1) $\Leftrightarrow$ (2) follows immediately from the definition of mixing.
(2) $\Rightarrow$ (3) is also immediate, since $[\depth_X(s,t),X]\sse \Ext(s,t,X)$.
To see that (3) implies (2),
let $Y\in \Ext(s,t,X)$ be given, and let $c=r_{\depth_X(s,t)}(Y)$.
By Fact \ref{fact.useful},
there is  a $Y'\in[\depth_X(s,t),X]$  such that  $r[s,Y'/t]\sse r[s,Y/t]$ and $r[t,Y'/s]\sse r[t,Y/s]$. 
By (3), there are $a\in \mathcal{F} \cap r[s,Y'/t]$ and $b\in \mathcal{F}\cap r[t,Y'/s]$ such that $f(a)=f(b)$.
By our choice of $Y'$, $a$ is in $r[s,Y/t]$ and $b$ is in $ r[t,Y/s]$. 
Thus, (2) holds.
\end{proof}

\begin{lem}[Transitivity of Mixing]\label{lem.transmix}
Suppose that $X$ mixes $s$ and $t$ and $X$ mixes $t$ and $u$.
Then $X$ mixes $s$ and $u$.
\end{lem}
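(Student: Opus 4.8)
The plan is to argue by contradiction: suppose $X$ mixes $s$ and $t$, and $X$ mixes $t$ and $u$, but $X$ does \emph{not} mix $s$ and $u$. By Fact~\ref{fact.equivformsmix} and the definition of mixing, the failure of mixing for $s,u$ means there is some $Y\in[\depth_X(s,u),X]$ (equivalently some $Y\in\Ext(s,u,X)$) that separates $s$ and $u$; that is, $f(a)\ne f(b)$ for every $a\in\mathcal{F}\cap r[s,Y/u]$ and every $b\in\mathcal{F}\cap r[t,Y/s]$ — I mean $b\in\mathcal{F}\cap r[u,Y/s]$. The goal is to use this separating $Y$, together with the hypothesis that $X$ mixes both $s,t$ and $t,u$, to derive a contradiction.

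First I would pass to a common inner witness. Using Fact~\ref{fact.useful} repeatedly, I would shrink $Y$ to a single $Y'\in[\depth_X(s,t,u),X]$ (where $\depth_X(s,t,u)$ is the max of the three depths) with the property that finite approximations extending $s$, $t$, or $u$ inside $Y'$ stay inside the relevant pieces of $Y$; this lets me work with one member of $\mathcal{E}_k$ that simultaneously controls all three finite approximations. Since $X$ mixes $s$ and $t$, Fact~\ref{fact.equivformsmix}(3) applied to $Y'$ gives $a\in\mathcal{F}\cap r[s,Y'/t]$ and $b\in\mathcal{F}\cap r[t,Y'/s]$ with $f(a)=f(b)$. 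Now the key move: having fixed such a $b$ extending $t$ inside $Y'$, I would like to shrink $Y'$ further — below $b$ on the part past $t$ — to get $Y''$ which still lies above $u$, and then use that $X$ mixes $t$ and $u$ to find $b'\in\mathcal{F}\cap r[t,Y''/u]$ and $c\in\mathcal{F}\cap r[u,Y''/t]$ with $f(b')=f(c)$; but I need to coordinate this with the $b$ I already have. The clean way is: from mixing of $t$ and $u$ on an appropriate restriction, extract $b'$ extending $t$ and $c$ extending $u$ with $f(b')=f(c)$, then separately from mixing of $s$ and $t$ applied to the restriction sitting below $b'$, extract $a'$ extending $s$ with $f(a')=f(b')$; then $f(a')=f(c)$, while $a'\in\mathcal{F}\cap r[s,Y/u]$ and $c\in\mathcal{F}\cap r[u,Y/s]$, contradicting that $Y$ separates $s$ and $u$.

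The step I expect to be the main obstacle is the bookkeeping that makes these three extractions live inside one coherent member of $\mathcal{E}_k$: I must choose the restrictions so that $a'$ ends up in $r[s,Y/u]$ (not merely in $r[s,Y'/t]$) and $c$ ends up in $r[u,Y/s]$, which forces me to be careful about the order in which I shrink — first fix a $Y^*\le Y$ above all of $s,t,u$, then handle the $t$--$u$ mixing to produce $b'$ and $c$ inside $Y^*$, then restrict $Y^*$ below $b'$ to a $Y^{**}$ still above $s$ and still contained in $Y/u$, and finally apply the $s$--$t$ mixing inside $Y^{**}$ to produce $a'$ with $f(a')=f(b')=f(c)$. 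Fact~\ref{fact.useful} is exactly the tool that guarantees each such restriction can be performed while keeping approximations extending the \emph{other} finite approximations inside the previously chosen member, so the argument reduces to invoking it at each of the three stages and then reading off the contradiction with the separating property of $Y$.
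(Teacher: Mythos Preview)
Your proposal has a genuine gap at the linking step. Mixing of $s$ and $t$ only says: for every suitable $Z$ there \emph{exist} $a\in\mathcal{F}\cap r[s,Z/t]$ and $b\in\mathcal{F}\cap r[t,Z/s]$ with $f(a)=f(b)$. It does \emph{not} say that for a \emph{fixed, pre-chosen} $b'$ extending $t$ there is an $a'$ extending $s$ with $f(a')=f(b')$. Your plan first produces $b',c$ from $t$--$u$ mixing and then tries to ``restrict below $b'$'' and invoke $s$--$t$ mixing to get $a'$ with $f(a')=f(b')$. But restricting to any $Y^{**}$ with $b'\le_{\fin} Y^{**}$ still leaves infinitely many members of $\mathcal{F}\cap r[t,Y^{**}/s]$; the pair $(a'',b'')$ that $s$--$t$ mixing hands you need not have $b''=b'$ nor $f(b'')=f(b')$. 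There is no way, using Fact~\ref{fact.useful} alone, to arrange that $b'$ is the \emph{only} extension of $t$ in $\mathcal{F}$ inside your restriction, since $t$ has infinitely many $\mathcal{F}$-extensions in any member of $\mathcal{E}_k$ extending it.

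The paper handles exactly this obstruction with a Ramsey step you omit. One defines
\[
\mathcal{H}=\{b\in\mathcal{F}\cap r[t,A/s]:\exists\, a\in\mathcal{F}\cap r[s,Y/t]\ (f(a)=f(b))\},
\]
and applies the Abstract Ellentuck Theorem to the open set $\bigcup_{b\in\mathcal{H}}[b,A]$ to find $B$ homogeneous for $\mathcal{H}$. Mixing of $s$ and $t$ rules out the ``empty'' side, so \emph{every} $b\in\mathcal{F}\cap r[t,B/s]$ lies in $\mathcal{H}$. Only then does one invoke $t$--$u$ mixing on $B$ to get $b,c$ with $f(b)=f(c)$; since $b\in\mathcal{H}$, a matching $a$ exists automatically. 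In short, the existential witness from one mixing must be upgraded to a universal statement before it can be chained with the other, and that upgrade requires the Ramsey-theoretic homogenization, not just the bookkeeping of Fact~\ref{fact.useful}.
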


\begin{proof}
Without loss of generality, we may assume that $\depth_X(u)\le\depth_X(s)$, and hence $[\depth_X(s),X]=[\depth_X(s,u),X]$.
Let $Y$ be any member of $[\depth_X(s),X]$.
We will show that there are $a\in\mathcal{F}\cap r[s,Y/u]$ and 
$c\in r[u,Y/s]$ such that $f(a)=f(c)$.
It then follows that 
 $X$ mixes $s$ and $u$.

Take $A\in [\depth_X(s,t),X]$ as follows:
If $\depth_X(t)\le\depth_X(s)$, then let $A=Y$.
If $\depth_X(t)>\depth_X(s)$,
then take $A$ so that  $r[u,A/t]\sse r[u,Y/t]$ and $r[s,A/t]\sse r[s,Y/t]$.
This is possible by Fact \ref{fact.useful}.

Define 
\begin{equation}
\mathcal{H}=\{b\in\mathcal{F}\cap r[t,A/s]:\exists a\in \mathcal{F}\cap r[s,Y/t]( f(a)=f(b))\}.
\end{equation}
Define $\mathcal{X}=\bigcup\{[b,A]:b\in\mathcal{H}\}$.
Then $\mathcal{X}$ is an open set, so by the Abstract Ellentuck Theorem,
there is a $B\in[\depth_X(s,t),A]$ such that either $[\depth_X(s,t),B]\sse\mathcal{X}$ or else
$[\depth_X(s,t),B]\cap\mathcal{X}=\emptyset$.

If $[\depth_X(s,t),B]\cap\mathcal{X}=\emptyset$, then for each $b\in \mathcal{F}\cap r[t,B/s]$ and each $a\in\mathcal{F}\cap r[s,Y/t]$, we have that $f(a)\ne f(b)$.
Since $r[s,B/t]\sse r[s,A/t]\sse r[s,Y/t]$,
we have that $B$ separates $s$ and $t$, a contradiction.
Thus, $[\depth_X(s,t),B]\sse\mathcal{X}$.

Since $\depth_X(u)\le\depth_X(s)$, it follows that $B\in \Ext(t,u,X)$; so $B$ mixes $t$ and $u$.
Take $b\in\mathcal{F}\cap r[t,B/u]$ and $c\in \mathcal{F}\cap r[u, B/t]$ such that $f(b)=f(c)$.
Since $[\depth_X(s,t),B]\sse\mathcal{X}$, it follows that
$\mathcal{F}\cap r[t,B/s]\sse \mathcal{H}$.
Thus, there is an $a\in\mathcal{F}\cap r[s,Y/t]$ such that $f(a)=f(b)$.
Hence, $f(a)=f(c)$.
Note that $a\in r[s,Y/u]$ trivially, since $\depth_X(u)\le\depth_X(s)$.
Moreover,
$c\in r[u,Y/s]$:
To see this, note first that
$r[u,B/t]\sse r[u,A/t]$.
Secondly, $A=Y$ if $\depth_X(t)\le\depth_X(s)$, 
and $r[u,A/t]\sse r[u,Y/t]$ 
if $\depth_X(t)>\depth_X(s)$.
Therefore, $Y$ mixes $s$ and $u$.
\end{proof}

Next, we define the notion of a hereditary property, and give   a general lemma about fusion  to obtain a member of $\mathcal{E}_k$ on which a hereditary property holds.

\begin{defn}\label{hered}
A property  $P(s,X)$ defined on $\mathcal{AR}\times\mathcal{R}$
 is  {\em hereditary} if whenever $X\in\Ext(s)$ and $P(s,X)$ holds, then also $P(s,Y)$ holds for all $Y\in[\depth_X(s),X]$.
Similarly, a property $P(s,t,X)$ defined on $\mathcal{AR}\times\mathcal{AR}\times\mathcal{R}$
 is {\em hereditary} if whenever $P(s,t,X)$ holds, then also $P(s,t,Y)$ holds for all $Y\in [\depth_X(s,t),X]$.
\end{defn}

\begin{lem}\label{lem.hered}
Let $P(\cdot,\cdot)$ be a hereditary property on $\mathcal{AR}\times\mathcal{R}$.
If whenever $X\in\Ext(s)$ there is a $Y\in[\depth_X(s),X]$ such that $P(s,Y)$,
then for each $Z\in\mathcal{R}$, there is a $Z'\le Z$ such that for all $s\in\mathcal{AR}|Z'$,
$P(s,Z')$ holds.

Likewise, suppose  $P(\cdot,\cdot,\cdot)$ is  a hereditary property on $\mathcal{AR}\times\mathcal{AR}\times\mathcal{R}$.
If whenever $X\in \Ext(s,t)$ there is a $Y\in[\depth_X(s,t),X]$ such that $P(s,t,Y)$,
then for each $Z\in\mathcal{R}$, there is a $Z'\le Z$ such that for all $s,t\in\mathcal{AR}|Z'$,
$P(s,t,Z')$ holds.
\end{lem}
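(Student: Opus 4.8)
The plan is to run a standard fusion argument using the Abstract Ellentuck Theorem, exploiting the hereditariness of $P$ to diagonalize over the countably many finite approximations $s$ (or pairs $s,t$). I will only spell out the one-variable case, as the two-variable case is identical after replacing $s$ throughout by a pair $(s,t)$ and $\depth_X(s)$ by $\depth_X(s,t)$.

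First I would enumerate $\mathcal{AR}$ as $\langle s_n : n<\om\rangle$ in such a way that $s_n \le_{\fin} r_n(\mathbf{W}_k)$, i.e.\ the depth of $s_n$ in $\mathbf{W}_k$ is at most $n$; such an enumeration exists by axiom \textbf{A.2}(a), since $\{a\in\mathcal{AR}: a\le_{\fin} b\}$ is finite for every $b$. Starting from $Z_0 = Z$, I would build a $\le$-decreasing sequence $Z_0 \ge Z_1 \ge Z_2 \ge \cdots$ as follows. Given $Z_n$, consider $s_n$. If $s_n \notin \mathcal{AR}|Z_n$ (equivalently $Z_n \notin \Ext(s_n)$), set $Z_{n+1} = Z_n$. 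Otherwise $Z_n \in \Ext(s_n)$, so by hypothesis there is a $Y \in [\depth_{Z_n}(s_n), Z_n]$ with $P(s_n, Y)$; set $Z_{n+1} = Y$. In either case $Z_{n+1} \le Z_n$ and, crucially, $Z_{n+1} \in [\depth_{Z_n}(s_n), Z_n]$ whenever the second case applies, so $r_{\depth_{Z_n}(s_n)}(Z_{n+1}) = r_{\depth_{Z_n}(s_n)}(Z_n)$; moreover since $\depth_{Z_n}(s_n) \le n$ (this is where the ordering of the enumeration is used, together with $Z_n \le Z$ and the fact that depths only increase when passing to subspaces — one needs here that $\depth_{Z_n}(s_n)\le \depth_{\mathbf{W}_k}(s_n)\le n$), we have $r_n(Z_{n+1}) = r_n(Z_n)$.

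Now let $Z' = \bigcup_{n<\om} r_n(Z_n)$, the fusion of the sequence; the condition $r_n(Z_{n+1}) = r_n(Z_n)$ guarantees that the approximations cohere, so $Z'$ is a well-defined member of $\mathcal{E}_k$ with $Z' \le Z_n$ for every $n$ (indeed $\ran(Z') \subseteq \ran(Z_n)$ for each $n$, since each $X(\vec i_p)$ of $Z'$ appears already in $r_{p+1}(Z') = r_{p+1}(Z_{p+1}) \subseteq Z_{p+1} \le Z_n$ for all $n \le p+1$, and for $n > p+1$ one uses $Z_n \le Z_{p+1}$). Then I would verify the conclusion: fix any $s \in \mathcal{AR}|Z'$. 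Choose $n$ with $s = s_n$. Since $s \sse Z' \le Z_n$, we have $Z_n \in \Ext(s_n)$, so the second case applied at stage $n$ and $P(s_n, Z_{n+1})$ holds. Finally $Z' \le Z_{n+1}$, and $Z' \in [\depth_{Z_{n+1}}(s_n), Z_{n+1}]$ because $s_n \sse Z'$ forces $Z'$ to pass through $r_{\depth_{Z_{n+1}}(s_n)}(Z_{n+1})$; hence by hereditariness of $P$ we conclude $P(s_n, Z') = P(s, Z')$, as desired.

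The main obstacle — really the only non-bookkeeping point — is ensuring the fusion converges, i.e.\ arranging the bound $\depth_{Z_n}(s_n) \le n$ so that $r_n(Z_{n+1}) = r_n(Z_n)$ and the $Z_n$'s stabilize on every finite approximation. This is handled by the choice of enumeration: using \textbf{A.2}(a) to list $\mathcal{AR}$ so that $s_n$ has $\le_{\fin}$-depth at most $n$ in the top element $\mathbf{W}_k$, and observing that the depth of a fixed approximation can only be larger (never smaller) when computed inside a $\le$-smaller member, so $\depth_{Z_n}(s_n) \le \depth_{\mathbf{W}_k}(s_n) \le n$. Everything else is routine verification that the constructed $Z'$ is a genuine member of $\mathcal{E}_k$ below $Z$ — which follows from \textbf{A.1} and the coherence of the approximations — and an application of the hereditary hypothesis at the end. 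The two-variable statement is proved verbatim, enumerating $\mathcal{AR}\times\mathcal{AR}$ as $\langle (s_n,t_n):n<\om\rangle$ with $\depth_{\mathbf{W}_k}(s_n,t_n)\le n$, and replacing $P(s,X)$ by $P(s,t,X)$ throughout.
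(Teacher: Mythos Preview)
Your overall strategy—a fusion argument—is the right one (the paper itself omits the proof, pointing to Lemma~4.6 of \cite{Dobrinen/Todorcevic14}, which is exactly this kind of construction). However, there is a genuine gap in your convergence argument for the fusion sequence.

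You arrange $Z_{n+1}\in[\depth_{Z_n}(s_n),Z_n]$ and then claim that $\depth_{Z_n}(s_n)\le n$ yields $r_n(Z_{n+1})=r_n(Z_n)$. This implication goes the wrong way. If $d:=\depth_{Z_n}(s_n)\le n$, then membership in $[d,Z_n]$ only guarantees $r_d(Z_{n+1})=r_d(Z_n)$; the approximations $r_{d}(Z_{n+1}),\dots,r_n(Z_{n+1})$ are completely free to differ from those of $Z_n$. Consequently the sequence $\langle r_n(Z_n):n<\om\rangle$ need not cohere, and $Z'=\bigcup_n r_n(Z_n)$ need not be a member of $\mathcal{E}_k$ at all. (Your parenthetical is also internally inconsistent: you write that ``depths only increase when passing to subspaces'' and then assert $\depth_{Z_n}(s_n)\le\depth_{\bW_k}(s_n)$. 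In fact depth \emph{decreases} in $\le$-smaller members—think of the Ellentuck case, where $\depth_X(s)$ counts elements of $X$ up to $\max s$—so your displayed inequality is correct but the stated reason is backwards, and in any case the inequality does not give what you need.)

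The standard repair is to organize the diagonalization by depth rather than by a fixed enumeration of $\mathcal{AR}$. Given $Z_n$, list the finitely many $s\in\mathcal{AR}$ with $\depth_{Z_n}(s)=n$ (finiteness is \textbf{A.2}(a)), and apply the hypothesis once for each such $s$ in turn; since each application lands in $[n,\,\cdot\,]$, after finitely many steps you obtain $Z_{n+1}\in[n,Z_n]$ satisfying $P(s,Z_{n+1})$ for every $s$ with $\depth_{Z_n}(s)=n$. Now $r_n(Z_{n+1})=r_n(Z_n)$ holds by construction, the fusion $Z'$ exists in $\mathcal{E}_k$, and for any $s\in\mathcal{AR}|Z'$ with $d=\depth_{Z'}(s)$ one has $r_d(Z')=r_d(Z_d)=r_d(Z_{d+1})$, so $Z'\in[\depth_{Z_{d+1}}(s),Z_{d+1}]$ and hereditariness gives $P(s,Z')$. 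The two-variable case is identical. Your closing verification (``$s_n\sse Z'$ forces $Z'$ to pass through $r_{\depth_{Z_{n+1}}(s_n)}(Z_{n+1})$'') also tacitly assumes this coherence, so it too is unsupported until the construction is corrected.
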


The proof of Lemma \ref{lem.hered} is straightforward;  being very similar to that of  Lemma 4.6 in \cite{Dobrinen/Todorcevic14}, we omit it.

\begin{lem}\label{lem.decide}
Given any front $\mathcal{F}$ and function $f:\mathcal{F}\ra\om$,
there is an $X\in\mathcal{E}_k$ such that for all $s,t\in\hat{\mathcal{F}}|X$,
$X$ decides $s$ and $t$.
\end{lem}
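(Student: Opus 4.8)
The plan is to deduce this from Lemma~\ref{lem.hered} applied to the property $P(s,t,X)$ which asserts ``$X$ decides $s$ and $t$'' (for $s,t\in\hat{\mathcal{F}}$ with $X\in\Ext(s,t)$; vacuously true otherwise). By Lemma~\ref{lem.hered}, it suffices to show two things: first, that $P$ is hereditary; second, that whenever $X\in\Ext(s,t)$ there is a $Y\in[\depth_X(s,t),X]$ such that $P(s,t,Y)$ holds. The first is essentially immediate from the definitions: if $X$ separates $s$ and $t$, then any $Y\in[\depth_X(s,t),X]$ has $r[s,Y/t]\sse r[s,X/t]$ and $r[t,Y/s]\sse r[t,X/s]$, so $Y$ separates as well; if $X$ mixes $s$ and $t$, then by the definition of mixing no $Z\in\Ext(s,t,X)$ separates them, and since $\Ext(s,t,Y)\sse\Ext(s,t,X)$ for $Y\le X$, also no $Z\in\Ext(s,t,Y)$ separates them, i.e.\ $Y$ mixes $s$ and $t$. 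Thus ``decides'' is preserved on $[\depth_X(s,t),X]$.

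The substantive point is the second requirement: given $X\in\Ext(s,t)$, produce $Y\in[\depth_X(s,t),X]$ deciding $s$ and $t$. Here I would argue by a straightforward dichotomy using the Abstract Ellentuck Theorem (or Abstract Nash-Williams). Set $d=\depth_X(s,t)$ and consider the set
\[
\mathcal{X}=\{Z\in\mathcal{E}_k:\exists\, a\in\mathcal{F}\cap r[s,Z/t]\ \exists\, b\in\mathcal{F}\cap r[t,Z/s]\ \big(f(a)=f(b)\big)\}.
\]
This is an open subset of $\mathcal{E}_k$ in the Ellentuck topology (membership is witnessed by finite approximations $a,b$, so it is determined by an initial segment of $Z$), so by the Abstract Ellentuck Theorem there is $Y\in[d,X]$ with either $[d,Y]\sse\mathcal{X}$ or $[d,Y]\cap\mathcal{X}=\emptyset$. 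In the first case, every $Z\in[d,Y]$ contains such a matching pair $a,b$, which by Fact~\ref{fact.equivformsmix} (the equivalence of mixing with the condition over all $Z\in[d,Y]$) shows $Y$ mixes $s$ and $t$. In the second case, no $Z\in[d,Y]$ has a matching pair, so in particular $Y$ itself has no $a\in\mathcal{F}\cap r[s,Y/t]$, $b\in\mathcal{F}\cap r[t,Y/s]$ with $f(a)=f(b)$; combined with Fact~\ref{fact.equivformsmix} this says $Y$ does \emph{not} mix $s$ and $t$, and since ``decides'' means ``mixes or separates'', we get that $Y$ separates $s$ and $t$. (One should double-check that ``not mixing'' is literally the same as ``separating'' in the present set-up: $X$ separates $s,t$ iff the displayed matching pair does not exist relativized to $X$; $X$ mixes iff no $Y\in\Ext(s,t,X)$ separates; so if $X$ does not mix, some $Y\in\Ext(s,t,X)$ separates, and then by heredity of separation we may as well pass to that $Y$. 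This is exactly why we phrase $P$ with an existential-style conclusion inside the hereditary framework.) Either way $P(s,t,Y)$ holds.

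With both hypotheses of Lemma~\ref{lem.hered} verified, the lemma yields a single $Z'\in\mathcal{E}_k$ such that $P(s,t,Z')$ holds for all $s,t\in\hat{\mathcal{F}}|Z'$, which is precisely the assertion: relabelling $Z'$ as $X$, we have $X\in\mathcal{E}_k$ with $X$ deciding every pair $s,t\in\hat{\mathcal{F}}|X$.

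The step I expect to require the most care is the second bullet, specifically the bookkeeping around $\depth$ and the sets $r[s,Z/t]$: one needs the openness of $\mathcal{X}$ stated with respect to the right topology, and one needs Fact~\ref{fact.equivformsmix} to convert the ``for all $Z\in[d,Y]$'' conclusion of the Ellentuck dichotomy into the ``for all $Y'\in\Ext(s,t,Y)$'' form in the definition of mixing (and likewise to see that the complementary case genuinely forces separation rather than merely the failure of the pointwise matching condition). The heredity verification and the final invocation of Lemma~\ref{lem.hered} are routine. I would model the write-up closely on the proof of Lemma~\ref{lem.transmix}, which already exhibits all the needed manipulations of $\mathcal{X}$, $[\depth_X(\cdot),\cdot]$, and the Abstract Ellentuck Theorem.
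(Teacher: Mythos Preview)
Your proposal is correct and follows the same overall strategy as the paper: apply Lemma~\ref{lem.hered} to the property $P(s,t,X)=$ ``$X$ decides $s$ and $t$'', after checking heredity and the existence hypothesis. The paper's proof is literally the one-line remark that the lemma follows from Lemma~\ref{lem.hered} together with the heredity of mixing and separating.

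The only difference worth noting is that you invoke the Abstract Ellentuck Theorem to establish the existence hypothesis, which is heavier machinery than necessary. The existence step is immediate from Fact~\ref{fact.equivformsmix}: if $X$ mixes $s,t$ take $Y=X$; if $X$ does not mix, then negating condition~(3) of Fact~\ref{fact.equivformsmix} directly yields some $Y\in[\depth_X(s,t),X]$ for which no matching pair $a,b$ exists, i.e.\ $Y$ separates. (Indeed, in your ``second case'' you already observe that $Y\notin\mathcal{X}$ means precisely that $Y$ separates---the subsequent detour through ``$Y$ does not mix, hence $Y$ separates'' is both unnecessary and, as stated, a non sequitur, since in general failure to mix does not imply separation for the \emph{same} $Y$.) So your Ellentuck dichotomy recovers exactly what Fact~\ref{fact.equivformsmix} already packaged, but the argument is sound.
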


Lemma \ref{lem.decide} follows immediately from Lemma \ref{lem.hered}  and the fact that mixing and separating are hereditary properties.

For $a\in\mathcal{AR}$ and $\vec{l}\in (k+1)^{|a|}$,
we shall let $\pi_{\vec{l}}(a)$ denote $\{\pi_{l_m}(a(m)):m<|a|\}$.

\begin{defn}\label{def.innerNW}
A map $\vp$ on a front $\mathcal{F}\sse \mathcal{AR}$ is called
\begin{enumerate}
\item
{\em inner} if for each $a\in \mathcal{F}$, 
$\vp(a)=\pi_{\vec{l}}(a)$, for some $\vec{l}\in (k+1)^{|a|}$.
\item
{\em Nash-Williams} if for all pairs $a,b\in \mathcal{F}$,
whenever 
 $\vp(b)=\pi_{\vec{l}}(b)$ and 
there is some $n\le |b|$ such that $\vp(a)=\pi_{(l_0,\dots,l_{n-1})}(r_n(b))$,
then $\vp(a)=\vp(b)$.
\item 
{\em irreducible} if it is inner and Nash-Williams.
\end{enumerate}
\end{defn}

\begin{defn}[Canonical equivalence relations on a front]\label{defn.canon}
Let $\mathcal{F}$ be a front on $\mathcal{E}_k$.
An equivalence relation $R$ on $\mathcal{F}$ is {\em canonical} if  and only if
there is an irreducible map $\vp$ canonizing $R$ on $\mathcal{F}$,
meaning that for all $a,b\in\mathcal{F}$, 
$a\, R\, a \llra \vp(a)=\vp(b)$.
\end{defn}

We shall show in Theorem \ref{thm.irred} (to be proved after  Theorem \ref{thm.rc})
that, similarly to the Ellentuck space, irreducible maps on $\mathcal{E}_k$ 
are unique in the following sense.

\begin{thm}\label{thm.irred}
Let  $R$ be an equivalence relation on some front $\mathcal{F}$ on $\mathcal{E}_k$.
Suppose $\vp$ and $\vp'$ are irreducible maps  canonizing $R$.
Then there is an $A\in\mathcal{E}_k$ such that 
for each $a\in\mathcal{F}|A$,
$\vp(a)=\vp'(a)$.
\end{thm}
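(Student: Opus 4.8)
\textbf{Proof plan for Theorem \ref{thm.irred}.} The plan is to show that two irreducible maps canonizing the same equivalence relation must agree on a member of the space, by exploiting the rigidity that the Nash--Williams condition imposes on inner maps. First I would observe that since both $\vp$ and $\vp'$ canonize $R$, for all $a,b\in\mathcal{F}$ we have $\vp(a)=\vp(b)\llra a\,R\,b\llra \vp'(a)=\vp'(b)$; so $\vp$ and $\vp'$ induce the \emph{same} partition of $\mathcal{F}$ into fibers, and it suffices to show that on some $A\in\mathcal{E}_k$ each common fiber is pinned down to the same set by both maps. The natural route is to compare, for a fixed $a\in\mathcal{F}|A$, the two sets $\vp(a)=\pi_{\vec l}(a)$ and $\vp'(a)=\pi_{\vec l\,'}(a)$, both of which are ``projections'' of the same finite approximation $a$, and argue that $\vec l$ and $\vec l\,'$ must coincide coordinatewise for all $a$ lying inside a suitably chosen $A$.

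The key steps, in order, would be as follows. (1) Use the Abstract Ellentuck Theorem (via a hereditary-property/fusion argument of the type codified in Lemma \ref{lem.hered}) to pass to a single $A\in\mathcal{E}_k$ such that the behavior of $\vec l$ and $\vec l\,'$ is ``stabilized'' on $\mathcal{F}|A$: more precisely, for each $m$ and each $b\in\mathcal{AR}_m|A$ on a front-compatible initial segment, whether the $m$-th coordinate of the $\vp$-index (resp.\ $\vp'$-index) of an extension $a\sqsupset b$ in $\mathcal{F}$ depends only on $b$, not on the rest of $a$; this is a coloring of $(m{+}1)$-extensions by the relevant coordinate, and A.4 (which holds for $\mathcal{E}_k$ by Lemma \ref{lem.pigeonhole_k}) lets us make it constant on a subspace. (2) Working inside this $A$, fix $a\in\mathcal{F}|A$ with $\vp(a)=\pi_{\vec l}(a)$, $\vp'(a)=\pi_{\vec l\,'}(a)$, and suppose toward a contradiction that $l_m\ne l_m'$ for some minimal $m$; without loss of generality $l_m<l_m'$. (3) The idea is to build two members $a,a'\in\mathcal{F}|A$ with $r_m(a)=r_m(a')$, differing only in the way the relevant block past stage $m$ is filled, so that $\vp(a)=\vp(a')$ (because $\pi_{l_m}$ only sees the coarser initial data that the two share, using the tree structure conditions (ii),(iii) of Definition \ref{def.E_k} that force $\pi_l$-values to be ``inherited'' along $\sqsubset$), while $\vp'(a)\ne\vp'(a')$ (because $\pi_{l_m'}$, being strictly finer, distinguishes the two fillings). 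This yields $a\,R\,a'$ from $\vp$ but $\neg(a\,R\,a')$ from $\vp'$, contradicting that both canonize $R$. (4) The symmetric argument handles $l_m>l_m'$, and running this for every $a\in\mathcal{F}|A$ gives $\vp=\vp'$ on $\mathcal{F}|A$, possibly after one more fusion to get a single $A$ working uniformly; here the Nash--Williams property is what guarantees the local index information glues into a genuine global equality rather than merely agreeing on fibers.

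The main obstacle I expect is step (3): correctly witnessing the inequality $\vp'(a)\ne\vp'(a')$ while simultaneously keeping $\vp(a)=\vp(a')$. The delicate point is that $\pi_l$ of a node in $\widehat{\bW}_k$ depends on the $\prec$-indices $m_{\vec i\re q}$, and these indices encode global information about how the node sits in the whole structure; so to make the two fillings genuinely indistinguishable to $\pi_{l_m}$ but distinguishable to $\pi_{l_m'}$ one must choose the extensions with care, using exactly the structural conditions (ii) and (iii) of Definition \ref{def.E_k} (monotonicity of maxima and the $\sqsubset$-isomorphism condition) together with Fact \ref{fact.dense} / Fact \ref{lem.thin} to realize the chosen combinatorial configuration inside $A$. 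A secondary subtlety is organizing the stabilization in step (1) so that all the finitely-many-at-a-time coordinate colorings can be diagonalized into one subspace; this is routine given Lemma \ref{lem.hered} and the closure of $\mathcal{E}_k$ under the required fusions, but it must be stated carefully so that the minimality argument in step (2) has a fixed ambient $A$ to work in.
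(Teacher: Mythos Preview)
Your proposal is correct and follows essentially the same line as the paper's proof: stabilize so that the projection index at each coordinate depends only on the initial segment, then compare indices coordinatewise and, at the first discrepancy, build two extensions that the coarser map identifies but the finer map separates, contradicting that both canonize $R$.

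The one difference worth noting is a shortcut the paper takes for the stabilization step. Rather than stabilizing two arbitrary irreducible maps via a direct fusion/$\mathbf{A.4}$ argument as you outline in step~(1), the paper compares the given $\vp'$ not to an arbitrary $\vp$ but to the \emph{specific} canonical map produced in the proof of Theorem~\ref{thm.rc}; that map already has, by construction, the property that its index at position $n$ depends only on $r_n(a)$. The paper then reapplies the proof of Theorem~\ref{thm.rc} (treating $\vp'$ as a function into $[\widehat{\bW}_k]^{<\om}$) to obtain a $B\le A$ on which $\vp'$ acquires the same local form. The general statement for two arbitrary irreducible maps then follows by transitivity through this canonical one. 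Your direct stabilization works too, but the paper's route avoids redoing the fusion machinery by recycling Theorem~\ref{thm.rc} as a black box.
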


\begin{defn}\label{def.1extcanon}
For each  pair $X,Y\in\mathcal{E}_k$, $m,n<\om$,  and $l\le k$, define
\begin{equation}
X(m)\, E_l\, Y(n)\llra \pi_l(X(m))=\pi_l(Y(m)).
\end{equation}
\end{defn}

Note that $X(m)\, E_0\, Y(n)$ for all $X,Y$ and $m,n$,
and $X(m)\, E_k\, Y(n)$ if and only if $X(m)=Y(n)$.
Let $\mathfrak{E}_k$ denote $\{E_l:l\le k\}$,
the set of {\em canonical equivalence relations} on $1$-extensions.

We now prove  the  Ramsey-classification theorem for equivalence relations on fronts.
The proof generally follows the same form as that  of Theorem 4.14 in \cite{Dobrinen/Todorcevic14}, the modifications either being proved or  pointed out.
One of the main differences is that, in our spaces $\mathcal{E}_k$, for any given  $s\le_{\fin} X$  there will be many $Y\le X$ such that $s$ cannot be extended into $Y$, and this has to be handled with care.
The other main difference is the type of  inner Nash-Williams maps for our spaces here necessitate quite different proofs of Claims \ref{claim.4.15} and \ref{claim.phiknows} from their analagous statements in \cite{Dobrinen/Todorcevic14}.
Finally, analogously to the Ellentuck space, the canonical equivalence relations are given by irreducible maps which are unique in the sense of Theorem \ref{thm.irred}.
This was not the case for the  topological Ramsey spaces in \cite{Dobrinen/Todorcevic14},
\cite{Dobrinen/Todorcevic15} and \cite{Dobrinen/Mijares/Trujillo14}, 
which can have different inner Nash-Williams maps canonizing the same equivalence relation; for those spaces, we showed that the right canonical map  is the maximal one.

\begin{thm}[Ramsey-classification Theorem]\label{thm.rc}
Let $2\le k<\om$ be fixed.
Given $A\in\mathcal{E}_k$ and   an equivalence relation $R$ on a front $\mathcal{F}$ on $A$,
there is a member $B\le A$ such that $R$ restricted to $\mathcal{F}|B$ is canonical.
\end{thm}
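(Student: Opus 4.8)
The plan is to prove the Ramsey-classification theorem by induction on the rank of the front $\mathcal{F}$, following the architecture of Theorem 4.14 in \cite{Dobrinen/Todorcevic14} but adapting to the new feature of the spaces $\mathcal{E}_k$: that a finite approximation $s\le_{\fin}X$ need not extend into an arbitrary $Y\le X$. First I would fix the equivalence relation $R$ on $\mathcal{F}$ and the function $f:\mathcal{F}\ra\om$ choosing one representative per $R$-class (so $a\,R\,b\llra f(a)=f(b)$), and apply Lemma \ref{lem.decide} to pass to a member $X\le A$ such that $X$ decides $s$ and $t$ for all $s,t\in\hat{\mathcal{F}}|X$. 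The key structural tool is the notion of mixing: $X$ mixes $s$ and $t$ iff no further restriction separates them, and by Lemma \ref{lem.transmix} mixing is transitive. The idea, as in the Ellentuck case, is that the canonical map $\vp$ is read off from which $1$-extensions $s^\frown u$ get mixed with $s$ — i.e.\ which coordinate-projections of the newly added node $u=b(\vec{i}_n)$ are ``forgotten'' by $f$.

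**Next I would establish the $1$-extension step.** Given $s\in\hat{\mathcal{F}}|X$ with $|s|=n$ and $n\in N^k_l$, consider the coloring on $r_{n+1}[s,X]$ assigning to each $1$-extension $s^\frown u$ the set of $t\sqsubseteq u$ (among $\pi_0(u),\pi_1(u),\dots$) for which $X$ mixes $s$ with $s^\frown t$. Using the pigeonhole principle \bf A.4 \rm for $\mathcal{E}_k$ (Lemma \ref{lem.pigeonhole_k}) together with the canonical-equivalence-relations-on-$1$-extensions analysis — this is where Lemma \ref{lem.canon1ext} and its higher-dimensional analogues via Fact \ref{lem.thin} enter — one finds $Y\in[\depth_X(s),X]$ on which this ``mixing set'' is constant across all $1$-extensions of $s$ into $Y$, and moreover is an initial segment $\pi_{l(s)}(u)$ for a well-defined $l(s)\le k$: the transitivity lemma forces the mixed projections to form a $\sqsubseteq$-downward-closed, hence initial, family among the $\pi_j(u)$, and the $E_l$-canonical structure of $1$-extensions (Definition \ref{def.1extcanon}, with $\mathfrak{E}_k=\{E_l:l\le k\}$) pins it to a single level. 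Then I would globalize via the fusion Lemma \ref{lem.hered}, since ``$X$ assigns a well-defined mixing level to every $s\in\hat{\mathcal{F}}|X$'' is a hereditary property, obtaining $B\le X$ working uniformly for all $s$.

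**Then I would define $\vp$ and verify canonicity.** For $a\in\mathcal{F}|B$ with $|a|=p$, set $\vp(a)=\pi_{\vec{l}}(a)$ where $\vec{l}=(l(r_0(a)),\dots,l(r_{p-1}(a)))$ using the levels produced above; this is inner by construction. That $\vp$ canonizes $R$ splits into: (i) if $\vp(a)=\vp(b)$ then $a\,R\,b$ — proved by walking $a$ up to $b$ through common initial segments, at each step the mixing at that node transferring $f$-values, and chaining by Lemma \ref{lem.transmix}, with care (as the authors flag) that the relevant finite approximations genuinely extend into the working member, handled by Fact \ref{fact.useful}; and (ii) if $\vp(a)\ne\vp(b)$ then $\neg(a\,R\,b)$ — here one shows $B$ separates the first place where the projections differ, again using that separating is hereditary and preserved downward. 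The analogues of Claims \ref{claim.4.15} and \ref{claim.phiknows} of \cite{Dobrinen/Todorcevic14} must be re-proved from scratch because the inner maps here are parametrized by projection-levels $\vec{l}\in(k+1)^{|a|}$ rather than by subset-selections; this — showing that $\vp$ ``knows'' enough, i.e.\ that the level $l(s)$ genuinely captures the $f$-behavior on all of $r[s,B]$ and not merely on $1$-extensions — is the step I expect to be the main obstacle, since it requires combining the $1$-step analysis with a second induction on rank and delicate bookkeeping about which approximations extend into which members. Finally, the Nash-Williams property of $\vp$ and its uniqueness (Theorem \ref{thm.irred}) follow by the same arguments as in the Ellentuck space, deferred to after the present theorem.
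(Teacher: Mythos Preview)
Your overall architecture matches the paper's: pass to a deciding member via Lemma \ref{lem.decide}, extract a canonical level $E_s$ for each $s$, fuse via Lemma \ref{lem.hered}, define $\vp$ from these levels, and verify canonicity; you also correctly locate the analogue of Claim \ref{claim.phiknows} as the crux. Two points, however, need correction.

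First, there is no induction on the rank of $\mathcal{F}$. The paper's argument is direct and works uniformly for any front; rank plays no role anywhere in the proof, and no ``second induction on rank'' appears in establishing Claim \ref{claim.phiknows} either---that claim is proved by finitely many homogenizations via the Abstract Nash-Williams Theorem.

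Second, and more substantively, your $1$-extension step is misformulated. The coloring you describe is ill-defined: for $j<k$ the object $s^\frown\pi_j(u)$ is not an element of $\mathcal{AR}$, so the phrase ``$X$ mixes $s$ with $s^\frown\pi_j(u)$'' has no meaning. The correct question (the paper's Claim \ref{claim.4.15}) is not about mixing a $1$-extension with $s$, but about mixing \emph{pairs} of $1$-extensions with each other: for $a,b\in r_{n+1}[s,X]$, when does $X$ mix $a$ and $b$? After shrinking, the answer is: exactly when $a(n)\,E_j\,b(n)$ for a uniquely determined $j$, with $j=0$ or $l<j\le k$ when $n\in N^k_l$. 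This $j$ defines $\pi_s:=\pi_j$, and then $\vp(t)=\{\pi_{r_m(t)}(t(m)):m<|t|\}$. The paper obtains the level not by a single application of \textbf{A.4} but by iterated Nash-Williams applications to sets $\mathcal{H}_{j+1}$ comparing $1$-extensions that agree on $\pi_j$ but differ on $\pi_{j+1}$, stepping $j$ from $l+1$ up to $k$. Once you reformulate the $1$-extension analysis this way, the rest of your outline (Fact \ref{claim.4.17}, Claim \ref{claim.phiknows} with its three subclaims, and Claim \ref{claim4.19}) is exactly what the paper does.
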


\begin{proof}
By Lemma \ref{lem.decide} and shrinking $A$ if necessary, we may assume  that for all $s,t\in \hat{\mathcal{F}}|A$,
$A$ decides for $s$ and $t$.
For  $n<\om$, $s\in\mathcal{AR}_{n}$,  $X\in\Ext(s)$, and $E\in\mathfrak{E}_k$,
we shall  say that $X$ $E$-{\em mixes} $s$ if and only if
for all $a,b\in r_{n+1}[s,X]$,
\begin{equation}
X\mathrm{\ mixes\ } a \mathrm{\ and\ } b\ \ \llra \ \  a(n)\, E\, b(n).
\end{equation}

\begin{claim}\label{claim.4.15}
There is an $A'\le A$ such that for each $s\in(\hat{\mathcal{F}}\setminus\mathcal{F})|A'$, 
letting $n=|s|$, the following holds:
There is a canonical equivalence relation $E_s\in\mathfrak{E}_k$ such that 
for all $a,b\in r_{n+1}[s,A']$,
$B$ mixes $a$ and $b$ if and only if $a(n)\, E_s\, b(n)$.
Moreover,  $n\in N^k_l$ implies $E_s$ cannot be $E_j$ for any $1\le j \le l $.
\end{claim}

\begin{proof}
Let $X\le A$ be given and $s\in(\hat{\mathcal{F}}\setminus\mathcal{F})|A$.
Let $n=|s|$ and $l< k$ be such that $n\in N^k_l$.
We will show that there is a $Y\in [\depth_X(s),X]$ and 
either $j=0$ or else a $l<j\le k$ such that
 for each 
$a,b\in r_{n+1}[s,Y]$,
$Y$ mixes $a$ and $b$ if and only if $a(n)\, E_j\, b(n)$.
The Claim will then immediately follow from Lemma \ref{lem.hered}.

First, let $m>n$ be least such that for any $a\in r_{m+1}[s,X]$, 
$\pi_{l}(a(m))=\pi_{l}(a(n))$ but $\pi_{l+1}(a(m))> \pi_{l+1}(a(n))$.
Define
\begin{equation}
\mathcal{H}_{l+1}=\{a\in r_{m+1}[s,X]: A \mathrm{\ mixes\ } s\cup a(m)\mathrm{\ and\ } s\cup a(n)\}.
\end{equation}
By the Abstract Nash-Williams Theorem, there is a $Y_{l+1}\in [s,X]$ such that either $r_{m+1}[s,Y_{l+1}]\sse\mathcal{H}_{l+1}$, or else $r_{m+1}[s,Y_{l+1}]\cap\mathcal{H}_{l+1}=\emptyset$.
If $r_{m+1}[s,Y_{l+1}]\sse\mathcal{H}_{l+1}$, then every pair of $1$-extensions of $s$ into $Y_{l+1}$ is mixed by $A$; 
hence,  $E_s\re r_{n+1}[s,Y_{l+1}]$ is given by $E_0$.
In this case, let $A'=Y_{l+1}$.
Otherwise,  $r_{m+1}[s,Y_{l+1}]\cap\mathcal{H}_{l+1}=\emptyset$,
 so every pair 
of $1$-extensions of $s$ into $Y_{l+1}$ which differ on level $l+1$   is separated by $A$.

For the induction step, for $l+1\le j< k$,
suppose that  $Y_{j}$ is given and every pair of $1$-extensions of $s$ into $Y_{j}$ which differ on level $j$ is separated by $A$.
Let $m>n$ be least such that for any $a\in r_{m+1}[s,X]$, 
$\pi_{j}(a(m))=\pi_{j}(a(n))$ but $\pi_{j+1}(a(m))\ne \pi_{j+1}(a(n))$.
Define
\begin{equation}
\mathcal{H}_{j+1}=\{a\in r_{m+1}[s,X]: A \mathrm{\ mixes\ } s\cup a(m)\mathrm{\ and\ } s\cup a(n)\}.
\end{equation}
By the Abstract Nash-Williams Theorem, there is a $Y_{j+1}\in [s,Y_{j}]$ such that either $r_{m+1}[s,Y_{j+1}]\sse\mathcal{H}_{j+1}$, or else $r_{m+1}[s,Y_{j+1}]\cap\mathcal{H}_{j+1}=\emptyset$.
If $r_{m+1}[s,Y_{j+1}]\sse\mathcal{H}_{j+1}$, then every pair of $1$-extensions of $s$ into $Y_{j+1}$ is mixed by $A$; 
hence,  $E_s\re r_{n+1}[s,Y_{j+1}]=E_{j+1}$.
In this case, let $A'=Y_{j+1}$.

Otherwise,  $r_{m+1}[s,Y_{j+1}]\cap\mathcal{H}_{j+1}=\emptyset$, so every pair of $1$-extensions of $s$ into $Y_{j+1}$ which differ on level $j+1$ is separated by $A$.
If $j+1<k$, continue the induction scheme.
If the induction process 
terminates at some stage $j+1<k$, then letting $A'=Y_{j+1}$ satisfies the claim.
Otherwise,  the induction
does not terminate before $j+1=k$,
 in which case  $E_s\re r_{n+1}[s,Y_k]=E_k$ and we let $A'=Y_k$.

The above arguments show that $E_s\re r_{n+1}[a,A']$ is given by $E_j$, where either $j=0$ or else $l<j\le k$.
\end{proof}

For $s\in\mathcal{AR}_{n}|A'$, 
let $E_s$ denote the canonical equivalence relation for mixing 1-extensions of $s$ in  $r_{n+1}[s,A']$ from Claim \ref{claim.4.15}, and let $\pi_s$ denote the projection map on $\{t(n):t\in r_{n+1}[s,A']\}$  determined by  $E_s$.
Thus, for $a\in r_{n+1}[s,A']$, if $n\in N^k_l$, then 
\begin{equation}
\pi_s(a(n))=\emptyset \llra E_s\, =\, E_0,
\end{equation}
and for  $l<j\le k$,  
\begin{equation}
\pi_s(a(n))=\pi_j(a(n)) \llra E_s\, =\, E_{j}.
\end{equation}

\begin{defn}\label{def.proj}
For $t\in \hat{\mathcal{F}}|A'$,
define 
\begin{equation}
\vp(t)=\{\pi_{s}(t(m)): 
s\sqsubset t\mathrm{\ and\ } m=|s|\}.
\end{equation}
\end{defn}
It follows immediately from the definition that $\vp$ is an inner  map on $\mathcal{F}|A'$.

The next fact is straightforward, its proof  so closely resembling that of Claim 4.17 in \cite{Dobrinen/Todorcevic14} that we do not include it here.

\begin{fact}\label{claim.4.17}
Suppose $s\in(\hat{\mathcal{F}}\setminus\mathcal{F})|A'$ and $t\in\hat{\mathcal{F}}|A'$.
\begin{enumerate}
\item
Suppose $s\in\mathcal{AR}_{n}|A'$  and  $a,b\in r_{n+1}[s,A']$.
If $A'$ mixes $a$ and $t$ and $A'$ mixes $b$ and $t$, then $a(n)\, E_s\,  b(n)$.
\item
If $s\sqsubset t$ and $\vp(s)=\vp(t)$,
then $A'$ mixes $s$ and $t$.
\end{enumerate}
\end{fact}

The next lemma is the crux of the proof of the theorem.

\begin{claim}\label{claim.phiknows}
There is a $B\le A'$ such that 
for all $s,t\in(\hat{\mathcal{F}}\setminus\mathcal{F})|B$  which are mixed by $B$, the following holds:
For all $a\in r_{|s|+1}[s,B/t]$ and $b\in r_{|t|+1}[t,B/s]$,
$B$ mixes $a$ and $b$ if and only if $\pi_s(a(|s|))=\pi_t(b(|t|))$.
\end{claim}

\begin{proof}
We will show that for all pairs $s,t\in (\hat{\mathcal{F}}\setminus\mathcal{F})|A'$
which are  mixed by $A'$,
for each $X\in\Ext(s,t,A')$,
there is a $Y\in[\depth_X(s,t),X]$ such that 
for all  $a\in r_{|s|+1}[s,Y/t]$ and $b\in r_{|t|+1}[t,Y/s]$,
$A'$ mixes $a$ and $b$ if and only of $\vp_s(a(|s|))=\vp_t(b(|t|))$.
The conclusion will then follow from  Fact \ref{fact.equivformsmix} and Lemma \ref{lem.hered}.

Suppose
$s,t\in(\hat{\mathcal{F}}\setminus\mathcal{F})|A'$
are mixed by $A'$.
Let $m=|s|$, $n=|t|$,  $X\in\Ext(s,t,A')$,  
and  $d=\depth_X(s,t)$.

\begin{subclaim}\label{subclaim.bothE_0}
$E_s=E_0$ if and only if $E_t= E_0$.
\end{subclaim}

\begin{proof}
Suppose toward a contradiction that $E_s=E_0$ but $E_t\ne E_0$.
Let $l<k$ be such that $n\in N^k_l$.
Then $E_t\ne E_0$ implies  $E_t=E_p$ for some $l<p\le k$, by Claim \ref{claim.4.15}.
Fact \ref{claim.4.17} (1) implies 
that there is at most one $E_t$ equivalence class of 1-extensions $b$ of $t$ for which $b$ is mixed with each $1$-extension of $s$.
If each $b\in r_{n+1}[t,X/s]$ is not mixed with any $a\in r_{m+1}[s,X/t]$,
then $X$ separates $s$ and $t$, a contradiction.
So, suppose  $b\in r_{n+1}[t,X/s]$ is mixed with some $a\in r_{m+1}[s,X/t]$.
By Fact \ref{claim.4.17} (2), all 1-extensions $a,a'$ of $s$ are mixed.
Hence, $X$ mixes $b$ with every $a\in r_{m+1}[s,X/t]$.
Take $Y\in [d,X]$ such that, for the $j<k$ such that  $d\in N^k_j$,
$\max\pi_{j+1}(Y(d))>\max b$.
Then for each $b'\in r_{n+1}[t,Y/s]$, $\pi_p(b')>\pi_p(b)$, so $b'  \hskip-.05in\not\hskip-.07in E_t\, b$.
Hence, $b'$
is separated from each $a\in r_{m+1}[s,Y/t]$. 
But this contradicts that $X$ mixes $s$ and $t$.
Therefore, $E_t$ must also be $E_0$.
\end{proof}

Suppose
 both $E_s$ and $E_t$ are $E_0$.
Then  for all 
 $a\in r_{m+1}[s,X/t]$ and 
$b\in r_{n+1}[t,X/s]$,
$A'$ mixes $a$ and $b$, by Fact \ref{claim.4.17} (2) and transitivity of mixing. 
At the same time, $\pi_s(a(m))=\pi_t(b(n))=\emptyset$.
In this case simply let $Y=X$.

\begin{subclaim}\label{subclaim.miximpliesp=q}
Assume that $E_s\ne E_0$ and $E_t\ne E_0$.
Let $p,q$ be the numbers such that $m\in N^k_{p}$ and $n\in N^k_{q}$.
If $p\ne q$, then $A$ separates $s$ and $t$.
\end{subclaim}

\begin{proof}
Since both $E_s$ and $E_t$ are not $E_0$,
there are some $j,l$ such that $p<j\le k$, $q<l\le k$, $E_s=E_j$, and $E_t=E_l$.
Suppose without loss of generality that $q<p$.
Since $m\in N^k_{p}$, it follows that for each $a\in r_{m+1}[s,A'/t]$, 
$\pi_p(a(m))\in\pi_p(s)$.
Furthermore, $\max\pi_p(a(m))<\max r_d(A')$,
and $\max\pi_{p+1}(a(m))>\max(r_d(A'))$,
where $d=\depth_{A'}(s,t)$.
Since $n\in N^k_{q}$, it follows that for each $b\in r_{n+1}[t,A'/s]$,
$\max\pi_{q+1}(b(n))>\max r_d(A')$.
Since $q<p$, 
every pair of  1-extensions of $s$ have the same $\pi_{q+1}$ value.
On the other hand,
every pair of  1-extensions of $t$ with different $\pi_{q+1}$ values  are separated, since $l\ge q+1$.
In particular,
$a(m)$ is never equal to $b(n)$,
for all $a\in r_{m+1}[s,A'/t]$ and $b\in r_{n+1}[t,A'/s]$.

Let $n'>d$ be minimal in $N^k_{q}$ such that 
there is an $m'\in N^k_{p}$ with $d\le m'<n'$,
 and such that
for each $c\in r_{n'+1}[d,A']$, both $s\cup c(m')\in r_{m+1}[s,A'/t]$ and 
$t\cup c(n')\in r_{n+1}[t,A'/s]$.
Define 
\begin{equation}
\mathcal{H}'=\{c\in r_{n'+1}[d,A']:A\mathrm{\ mixes\ } s\cup c(m')\mathrm{\ and\ } t\cup c(n')\}.
\end{equation}
Let $m''>d$ be minimal in $N^k_{p}$ such that there is an $n''\in N^k_{q}$ with $d\le n''<m''$,
 and such that
for each $c\in r_{m''+1}[d,A']$, both $s\cup c(m'')\in r_{m+1}[s,A'/t]$ and 
$t\cup c(n'')\in r_{n+1}[t,A'/s]$.
Define 
\begin{equation}
\mathcal{H}''=\{c\in r_{m''+1}[d,A']:A\mathrm{\ mixes\ } s\cup c(m'')\mathrm{\ and\ } t\cup c(n'')\}.
\end{equation}
Take $Y\in[d,A']$ homogeneous for both $\mathcal{H}'$ and $\mathcal{H}''$.

If $r_{n'+1}[d,Y]\sse\mathcal{H}'$,
then there are  two different 1-extensions  of $t$ in $Y$ above $s$ which are not $E_t$-related, yet are both mixed with the same extension of $s$, a contradiction, since mixing is transitive.
Similarly, if $r_{n'+1}[d,Y]\sse\mathcal{H}''$, we obtain a contradiction.
Thus, both $r_{n'+1}[d,Y]\cap\mathcal{H}'=\emptyset$ and 
$r_{m''+1}[d,Y]\cap\mathcal{H}''=\emptyset$; hence $Y$ separates $s$ and $t$.

Similarly, if $p<q$, we conclude that there is a $Y\in [d,A']$ which separates $s$ and $t$.
Since $A$ already decides $s$ and $t$, it follows that $A$ separates $s$ and $t$.
\end{proof}

By Subclaim \ref{subclaim.miximpliesp=q},  $s$ and $t$ being mixed by $A$ implies that
$p$ and $q$ must be equal.
Further, $s$ and $t$ mixed by $A$ also implies  $j$ must equal $l$.
To see this, supposing that $j<l$,  let $d\le m'<n'$ be such that $m'\in N^k_{p}$ and $n'\in N^k_{q}$,  and such that for each
$c\in r_{n'+1}[d,A']$, both $s\cup c(m')\in r_{m+1}[s,A'/t]$ and 
$t\cup c(n')\in r_{n+1}[t,A'/s]$.
Let
\begin{equation}
\mathcal{H}=\{c\in r_{n'+1}[d,A']:A \mathrm{\ mixes\ }s\cup c(m')\mathrm{\ and\ } t\cup c(n')\}.
\end{equation}
Then taking $Y\in [d,A']$ homogenous for $\mathcal{H}$, we find that $Y$ must separate these extensions of $s$ and $t$.
Likewise, for $n'<m'$.
Similarly, if $l<j$, we find a $Y\in[d,A']$ which separates $s$ and $t$, a contradiction.
Therefore, $j=l$.

\begin{subclaim}\label{subclaim.mixiff=}
There is a $Y\in [d,X]$ such that for all $a\in r_{m+1}[s,Y/t]$ and $b\in r_{n+1}[t,Y/s]$,
$Y$ mixes $a$ and $b$ if and only if $\pi_s(a(m))=\pi_t(b(n))$.
\end{subclaim}

\begin{proof}
We have already shown that $A$ mixing $s$ and $t$  implies that $p=q$ and $j=l$.
For each pair  $j+1\le j'\le k$ and $\rho\in\{<,=,>\}$, 
choose   minimal
$m',n'\in N^k_p$ such that $m',n'>d$,
$m'\, \rho\,  n'$,
and
for each
$c\in r_{\max(m',n')+1}[d,A']$, both $s\cup c(m')\in r_{m+1}[s,A'/t]$ and 
$t\cup c(n')\in r_{n+1}[t,A'/s]$.
For each such quadruple  $(j',m',n',\rho)$,
 let $\mathcal{H}_{(j',m',n',\rho)}$ denote the set of all $c\in r_{\max(m',n')+1}[d,X]$ such that $A$ mixes $s\cup c(m')$ and $t\cup c(n')$.
Take a $Y\in[d,X]$ which is homogeneous for all these sets.  Since there are only finitely many such quadruples, such a $Y$ exists.

Let $a\in r_{m+1}[s,Y/t]$ and $b\in r_{n+1}[t,Y/s]$.
Let $m',n'$ be least such that there is a $c\in r_{\max(m',n')+1}[d,Y]$ such that $a(m)=c(m')$ and $b(n)=c(n')$.
Let $\rho\in\{<,=,>\}$ be the relation such that $m'\, \rho\,  n'$.
If $\pi_j(a(m))\ne \pi_j(b(n))$, then $\rho\ne =$.
If $Y$ mixes $a$ and $b$, then in the case that $\rho$ is $<$, 
 there are $c,c'\in r_{n'+1}[d,Y]$ 
such that $c(m')=c'(m')$ but $\pi_j(c(n'))\ne \pi_j(c'(n'))$.
If $r_{n'+1}[d,Y]\sse \mathcal{H}_{(j',m',n',\rho)}$, then by transitivity of mixing,
 $Y$ mixes $s\cup c(n')$ and $s\cup c'(n')$.
But this contradicts Claim \ref{claim.4.15},
since $\pi_j(c(n'))\ne \pi_j(c'(n'))$.
Therefore, it must be the case that $r_{n'+1}[d,Y]\cap\mathcal{H}_{(j',m',n',\rho)}=\emptyset$, and hence,
$Y$ separates $a$ and $b$.
Likewise, if $\rho$ is $>$, we find that $Y$ separates $a$ and $b$.

Since by our assumption $s$ and $t$ are mixed by $A$ and $Y\le A$, 
$s$ and $t$ are mixed by $Y$.
Thus,
there must be some 1-extensions of $s$ and $t$ in $Y$ which are mixed by $Y$.
The only option left is that $Y$ mixes $a$ and $b$ when $\pi_l(a(m))=\pi_l(b(n))$.
Thus, 
$a$ and $b$ are mixed by $Y$ if and only if $\pi_l(a(m))=\pi_l(b(n))$.
\end{proof}

By Subclaim \ref{subclaim.mixiff=} and Lemma \ref{lem.hered},
the Claim holds.
\end{proof}

The next claim and its proof are  similar to Claim 4.19 in \cite{Dobrinen/Todorcevic14}.
We include it, as the modifications might not be obvious to the reader referring to \cite{Dobrinen/Todorcevic14}.

\begin{claim}\label{claim4.19}
For all $s,t\in \hat{\mathcal{F}}|B$, if $\vp(s)=\vp(t)$,
then $B$ mixes $s$ and $t$.
\end{claim}

\begin{proof}
Suppose that $\vp(s)=\vp(t)$.
By the definition of $\vp$, it follows that for all $n$, 
$\vp(s)\cap 
\{\pi_l(B(m)):  l\le k,\ m<n\}
=\vp(t)\cap \{\pi_l(B(m)):  l\le k,\ m<n\}$.
We show by induction that $B$ mixes $s\cap r_{n}(B)$ and $t\cap r_{n}(B)$ for all $n$. 
For the basis, $s\cap r_{0}(B)=t\cap r_{0}(B)=\emptyset$,
so $B$ trivially mixes $s\cap r_{0}(B)$ and $t\cap r_{0}(B)$.

Suppose that $B$ mixes $s\cap r_{n}(B)$ and $t\cap r_{n}(B)$.
Let $i,j$  be such that $r_{i}(s)=s\cap r_{n}(B)$ and $r_{j}(t)=t\cap r_{n}(B)$.
If $s\cap B(n)=t\cap B(n)=\emptyset$, then $B$ mixes $s\cap r_{n+1}(B)$ and $t\cap r_{n+1}(B)$.
If  $s\cap B(n)\ne\emptyset$, then $s(i)=s\cap B(n)$.
If  $t\cap B(n)=\emptyset$, then  
 $E_{r_{i}(s)}$ must be $E_{\emptyset}$, since $\vp(s)=\vp(t)$.
Then $B$ mixes $r_{i}(s)$ and $r_{i+1}(s)$, which equals $s\cap r_{n+1}(B)$.
Thus, $B$ mixes $s\cap r_{n+1}(B)$ and $t\cap r_{n+1}(B)$, 
since $t\cap r_{n+1}(B)=t\cap r_{n}(B)$.
Otherwise, $t\cap B(n)\ne\emptyset$, in which case $t(j)=t\cap B(n)$.
Since $\vp$ is inner and $\vp(s)=\vp(t)$,
there is an $l\le k$ such that 
 $\vp(s)\cap\{\pi_{l'}(B(n)):l'\le k\}
=\vp(t)\cap
\{\pi_{l'}(B(n)):l'\le k\}
=\pi_l(B(n))$.
This implies that $\vp_{r_{i}(s)}(s(i))=\vp_{r_{j}(t)}(t(j))$.
By Claim \ref{claim.phiknows},  $B$ mixes $r_{i+1}(s)=s\cap r_{n+1}(B)$ and $r_{j+1}(t)= t\cap r_{n+1}(B)$.
The case when 
$s\cap B(n)=\emptyset$
and
$t\cap B(n)\ne\emptyset$
is similar.
Thus, by induction, we find that $B$ mixes $s$ and $t$.
\end{proof}

Claim \ref{claim4.19} and  Fact \ref{claim.4.17} (1) imply that 
$\vp$ is a Nash-Williams function on $\mathcal{F}|B$.
As the  proof is  almost identical to that of Claim 4.20 in \cite{Dobrinen/Todorcevic14}, we omit it.
We finally obtain that for all  $s,t\in\mathcal{F}|B$, if $f(s)=f(t)$, then $\vp(s)=\vp(t)$, 
by a proof similar to that of 
 Claim 4.21 in \cite{Dobrinen/Todorcevic14}.

This concludes the proof of the Ramsey-classification theorem.
\end{proof}

We now prove that irreducible maps are unique, up to restriction below some member of the space.
\vskip.1in

\noindent \it Proof of Theorem \ref{thm.irred}. \rm
Let  $R$ be an equivalence relation on some front $\mathcal{F}$ on $\mathcal{E}_k$, and let $A\in\mathcal{E}_k$ be such that  the irreducible map $\vp$ from the proof of Theorem \ref{thm.rc} canonizes $R$ on $\mathcal{F}|A$.
Let  $\vp'$ be  any  irreducible map canonizing $R$ on $\mathcal{F}$.
Then $\vp'$ is a map from $\mathcal{F}$ into an infinite set, namely $[\widehat{\bW}_k]^{<\om}$.
Applying the proof of Theorem \ref{thm.rc} to $\vp'$,
we find a $B\le A$ such that for each 
 $t\in\mathcal{F}|B$  and $n<|t|$,
there is a sequence $\lgl l_{t,0},\dots,l_{t,|t|-1}\rgl$ such that  for each $n<|t|$, $\vp'(t)\cap\widehat{t(n)}=\pi_{l_n}(t(n))$, and
$\vp'(t)=\{\pi_{l_i}(t(i)):i<|t|\}$.
Now if 
 $\vp(t)\ne\vp'(t)$ for some  $t\in\mathcal{F}|B$, then 
there is some $n<|t|$ for which $\vp(t)\cap\widehat{t(n)}\ne \vp'(t)\cap\widehat{t(n)}$.
Let $m$ denote the integer less than $k$ such that $\pi_{r_n(t)}=\pi_m$.
If $l_{t,n}<m$, then there are $s,s'\in\mathcal{F}|B$ such that $s,s'\sqsupset r_n(t)$ and
$\vp(s)=\vp(s')$,
 but $\pi_{m}(s(n))\ne \pi_{m}(s'(n))$ and hence $\vp'(s)\ne\vp'(s')$.
This contradicts that $\vp$ and $\vp'$ canonize the same equivalence relation.
Likewise, if $m<l_{t,n}$, we obtain a contradiction.
Therefore, $\vp(t)$ must equal $\vp'(t)$ for all $t\in
\mathcal{F}|B$.
\hfill $\square$
\vskip.1in

As a corollary of Theorem \ref{thm.rc}, we obtain the following canonization theorem for the finite rank fronts $\mathcal{AR}_n$, the case of $n=1$ providing a higher order analogue of the \Erdos-Rado Theorem (see \cite{Erdos/Rado50}) for the Ellentuck space.

\begin{cor}\label{cor.er}
Let $k\ge 2$, $n\ge 1$, and $R$ be an equivalence relation on $\mathcal{AR}_n$ on the space $\mathcal{E}_k$.
Then there is   an  $A\in\mathcal{E}_k$ and there are $l_i\le k$  ($i<n$) such that  for each pair  $a,b\in\mathcal{AR}_n|A$,
$a\, R\, b$ if and only if  for each $i<n$, $\pi_{l_i}(a(i))=\pi_{l_i}(b(i))$.
Moreover, for each $i<n$, if $m$ is such that $i\in N^k_m$, then either $l_i=0$ or else $m+1\le l_i\le k$.
\end{cor}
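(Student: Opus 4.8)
The plan is to deduce Corollary \ref{cor.er} directly from Theorem \ref{thm.rc} by observing that $\mathcal{AR}_n$ is itself a front on any $A \in \mathcal{E}_k$, together with an analysis of what an irreducible map can look like when its domain is a front all of whose elements have the same length $n$. First I would note that $\mathcal{AR}_n = \{r_n(X) : X \in \mathcal{E}_k\}$ satisfies the two defining conditions of a front (Definition \ref{def.frontR1}): every $Y \in [0,A]$ has $r_n(Y) \sqsubset Y$ with $r_n(Y) \in \mathcal{AR}_n$, giving condition (1); and since all members of $\mathcal{AR}_n$ have length exactly $n$, no one is a proper $\sqsubseteq$-initial segment of another, so $\mathcal{AR}_n$ is Nash-Williams, giving condition (2). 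Hence Theorem \ref{thm.rc} applies: there is $B \le A$ and an irreducible map $\vp$ canonizing $R$ on $\mathcal{AR}_n | B$, so that for $a, b \in \mathcal{AR}_n | B$, $a \, R \, b \llra \vp(a) = \vp(b)$.

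Next I would unpack the irreducibility. Being inner (Definition \ref{def.innerNW}(1)), for each $a \in \mathcal{AR}_n|B$ there is $\vec{l}^{\,a} \in (k+1)^n$ with $\vp(a) = \pi_{\vec{l}^{\,a}}(a) = \{\pi_{l^a_i}(a(i)) : i < n\}$. The point to establish is that the exponent sequence $\vec{l}^{\,a}$ can be taken independent of $a$. This is exactly what the proof of Theorem \ref{thm.rc} delivers: the map $\vp$ constructed there assigns $\pi_s(t(|s|))$ at each node, where $s = r_{|s|}(t)$ and $\pi_s$ is the projection determined by the canonical equivalence relation $E_s \in \mathfrak{E}_k$ for mixing $1$-extensions of $s$; and by Claim \ref{claim.4.15} the relation $E_s$ depends only on $|s|$ up to the homogenization, i.e. after passing to $B$ the value $E_s$ is the same for all $s \in \mathcal{AR}_i|B$ of a given length $i$. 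Thus there are fixed $l_0, \dots, l_{n-1} \le k$ (namely $l_i$ is the index $j$ with $E_s = E_j$ for $s$ of length $i$, with $l_i = 0$ when $E_s = E_0$) such that $\vp(a) = \{\pi_{l_i}(a(i)) : i < n\}$ for every $a \in \mathcal{AR}_n|B$. Since the $a(i)$ for distinct $i$ lie in disjoint parts of $\widehat{\bW}_k$ (by the tree structure, their $\pi_1$-values are strictly increasing in $i$ by condition (ii) of Definition \ref{def.E_k}), the equation $\vp(a) = \vp(b)$ decomposes coordinatewise: $\vp(a) = \vp(b) \llra \forall i < n\ \pi_{l_i}(a(i)) = \pi_{l_i}(b(i))$. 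Combining with $a\,R\,b \llra \vp(a)=\vp(b)$ gives the stated equivalence.

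For the "moreover" clause I would appeal to the last sentence of Claim \ref{claim.4.15}: if $s \in \mathcal{AR}_i|B$ and $i \in N^k_m$, then $E_s$ cannot be $E_j$ for any $1 \le j \le m$, so either $E_s = E_0$ (i.e. $l_i = 0$) or $E_s = E_j$ for some $m+1 \le j \le k$ (i.e. $m+1 \le l_i \le k$). This is precisely the asserted restriction on each $l_i$.

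The main obstacle I anticipate is not any single hard argument but rather making precise the passage "from a single irreducible $\vp$ to a constant exponent vector." One must be careful that the homogenization steps inside the proof of Theorem \ref{thm.rc} (Claim \ref{claim.4.15} applied via Lemma \ref{lem.hered}) really do make $E_s$ a function of $|s|$ alone on the final $B$, rather than merely canonical for each individual $s$; here the front $\mathcal{AR}_n$ has the convenient feature that $\hat{\mathcal{F}} = \bigcup_{i \le n} \mathcal{AR}_i$ is exactly the set of all approximations of length $\le n$, so the hereditary-property fusion of Lemma \ref{lem.hered} applied to "$E_s$ is canonical and depends only on $|s|$" is unproblematic. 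A secondary point worth spelling out — though routine — is the coordinatewise decomposition of $\vp(a) = \vp(b)$, which relies on the strict monotonicity of $\max a(i)$ (equivalently of the least elements) in $i$ guaranteed by Definition \ref{def.E_k}(ii), ensuring that the sets $\pi_{l_i}(a(i))$ for different $i$ never interfere. For $n = 1$ the statement reduces to: $R$ on $\mathcal{AR}_1$ is, after restriction, one of the $k+1$ relations $E_0, \dots, E_k$, the higher-order \Erdos-Rado phenomenon advertised in the corollary.
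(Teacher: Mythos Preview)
Your overall approach matches the paper's intent (the paper states this result with no proof beyond ``As a corollary of Theorem \ref{thm.rc}''), but two of your justifications are incorrect as written.

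First, Claim \ref{claim.4.15} does \emph{not} assert that $E_s$ depends only on $|s|$; it only provides, for each individual $s$, a canonical $E_s$, and different $s,s'$ of the same length may well have $E_s \ne E_{s'}$ on the $A'$ produced there. Lemma \ref{lem.hered} does not repair this either: it fuses a hereditary \emph{property} over all $s$, not a \emph{value}. The fix is easy but different from what you wrote: color $\mathcal{AR}_n|A'$ by the finite tuple $(E_{r_0(a)},\dots,E_{r_{n-1}(a)}) \in \mathfrak{E}_k^{\,n}$ and apply the Abstract Nash-Williams Theorem to pass to $B \le A'$ on which this tuple is constant, yielding the fixed sequence $(l_0,\dots,l_{n-1})$.

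Second, your justification of the coordinatewise decomposition is factually wrong: the $\pi_1$-values of the $a(i)$ are \emph{not} strictly increasing in $i$ (in $\mathcal{E}_2$, for instance, $\pi_1(a(0)) = \pi_1(a(1))$ since $\vec{i}_0 \re 1 = \vec{i}_1 \re 1 = (0)$). The correct argument is this: for $i$ in $I = \{i<n : l_i>0\}$, the constraint $l_i \ge m+1$ when $i \in N^k_m$ forces the nodes $\vec{i}_i \re l_i$ to be pairwise distinct; by condition (ii) of Definition \ref{def.E_k} the values $\max \pi_{l_i}(a(i))$ are then strictly ordered according to the fixed $\prec$-order of those nodes. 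Since that $\prec$-order is the same whether one reads it off from $a$ or from $b$, listing the elements of $\vp(a)$ and of $\vp(b)$ by increasing $\max$ recovers the coordinate sequences, and $\vp(a)=\vp(b)$ then gives $\pi_{l_i}(a(i)) = \pi_{l_i}(b(i))$ for each $i$.
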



\section{Basic  cofinal maps from the generic ultrafilters}\label{sec.basic}

In Theorem 20  in \cite{Dobrinen/Todorcevic11},
it was proved that every monotone cofinal map from a p-point into another ultrafilter is actually continuous, after restricting below some member of the p-point.
This property of p-points was  key in 
  \cite{Raghavan/Todorcevic12}, \cite{Dobrinen/Todorcevic14}, \cite{Dobrinen/Todorcevic15}, and \cite{Dobrinen/Mijares/Trujillo14} to 
pulling out a Rudin-Keisler map on a front from a cofinal map on an ultrafilter, thereby, along with the appropriate Ramsey-classification theorem, allowing for a fine analysis of initial Tukey structures in terms of Rudin-Keisler isomorphism types.
Although the generic ultrafilters under consideration here do not admit continuous cofinal maps, they do possess  the key property allowing for the analysis of Tukey reducibility in terms of Rudin-Keisler maps on a front.
We prove in Theorem \ref{thm.canon} that each monotone map from the generic ultrafilter $\mathcal{G}_k$ for $\mathcal{P}(\om^k)/\Fin^{\otimes k}$ into $\mathcal{P}(\om)$ is {\em basic} (see Definition \ref{def.41}) on a filter base for $\mathcal{G}_k$, which implies that it is represented by a finitary function.
This is sufficient for analyzing  Tukey reducibility in terms of Rudin-Keisler maps on fronts.
In the next section, Theorem \ref{thm.canon} will combine  with Theorem \ref{thm.rc} to prove 
 that the initial Tukey structure of nonprincipal ultrafilters below $\mathcal{G}_k$ is exactly a chain of length $k$:  $\mathcal{G}_k>_T\pi_{k-1}(\mathcal{G}_k)>_T\dots>_T\pi_1(\mathcal{G}_k)$.

In Theorem 42 in \cite{Blass/Dobrinen/Raghavan13}, we proved  that each monotone cofinal map from $\mathcal{G}_2$
to some other ultrafilter  is represented by a monotone finitary map which preserves initial segments. 
Here, we extend that result to all  $\mathcal{G}_k$, $k\ge 2$.
Slightly refining
Definition 41 in \cite{Blass/Dobrinen/Raghavan13} and extending it to all $\mathcal{E}_k$, we have the following notion of a canonical cofinal map.

Given  that  $\mathcal{P}(\om^k)/\Fin^{\otimes k}$ is forcing equivalent to $(\mathcal{E}_k,\sse^{\Fin^{\otimes k}})$,
we from now on let  $\mathcal{B}_k$ denote $\mathcal{G}_k\cap\mathcal{E}_k$, where we identify $[\om]^2$ with the upper triangle $\{(i,j):i<j<\om\}$.

\begin{defn}\label{def.41}
Let $2\le k<\om$.
Given $Y\in\mathcal{B}_k$,
a monotone map $g:\mathcal{B}_k| Y\ra\mathcal{P}(\om)$  is {\em basic}
if there is a map $\hat{g}:\mathcal{AR}|Y \ra[\om]^{<\om}$ such that 
\begin{enumerate}
\item
(monotonicity)
For all $s,t\in\mathcal{AR}|Y$, 
$s\sse t\ra\hat{g}(s)\sse\hat{g}(t)$; 
\item
(end-extension preserving)
For $s\sqsubset t$ in $\mathcal{AR}|Y$,
$\hat{g}(s)\sqsubseteq \hat{g}(t)$; 
\item
($\hat{g}$ represents $g$)
For each $V\in\mathcal{B}_k|Y$, 
$g(V)=\bigcup_{n<\om}\hat{g}(r_n(V))$.
\end{enumerate}
\end{defn}

\begin{thm}[Basic monotone maps on $\mathcal{G}_k$]\label{thm.canon}
Let $2\le k<\om$ and $\mathcal{G}_k$ generic for $\mathcal{P}(\om^k)/\Fin^{\otimes k}$ be given.
In $V[\mathcal{G}_k]$,
for each monotone function $g:\mathcal{G}_k\ra\mathcal{P}(\om)$,
there is a $Y\in\mathcal{B}_k$ such that $g\re(\mathcal{B}_k| Y)$ is basic. 
\end{thm}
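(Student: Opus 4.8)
The plan is to adapt the standard argument for p-points (Theorem 42 of \cite{Blass/Dobrinen/Raghavan13}, itself modeled on Theorem 20 of \cite{Dobrinen/Todorcevic11}) to the setting of the topological Ramsey space $\mathcal{E}_k$, using the Abstract Nash-Williams Theorem (Theorem \ref{thm.ANW}) in place of the Galvin--Prikik-type combinatorics. First I would argue in the ground model: working with the forcing $(\mathcal{E}_k,\sse^{\Fin^{\otimes k}})$, which is forcing equivalent to $\mathcal{P}(\om^k)/\Fin^{\otimes k}$ by Proposition \ref{prop.forcingequiv} (and its higher-dimensional analogues), it suffices to show that the set of $Y\in\mathcal{E}_k$ forcing ``$g\re(\mathcal{B}_k|Y)$ is basic'' is dense. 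By genericity, $\mathcal{B}_k=\mathcal{G}_k\cap\mathcal{E}_k$ is a filter base for $\mathcal{G}_k$, so once we produce such a $Y\in\mathcal{B}_k$ we are done. Fix a name $\dot g$ for a monotone function $\mathcal{G}_k\to\mathcal{P}(\om)$ and a condition $Y_0$.

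The heart of the proof is the construction of a fusion sequence defining $\hat g$ on the finite approximations. For each $a\in\mathcal{AR}|Y_0$ and each $j<\om$, consider the statement ``some condition below $a$ (as a member of $\mathcal{E}_k$, i.e.\ in $[a,Z]$ for the current $Z$) forces $j\in\dot g(\dot{\mathcal{B}}_k\cap[\![\cdot]\!])$'' — more precisely, whether $j$ is forced into $\dot g(V)$ for all $V$ extending $a$ in the generic filter. Using the Abstract Nash-Williams Theorem applied to the family of finite approximations deciding membership of each $j$, I would recursively shrink $Y_0$ to get $Y_1\le Y_0$ such that for every $a\in\mathcal{AR}|Y_1$ and every $j<\om$, the condition $a$ itself (relative to $Y_1$) decides whether $j\in\dot g(V)$ for all $V\in\mathcal{B}_k|Y_1$ with $a\sqsubset V$; this uses that a monotone $g$ has $g(V)\supseteq g(W)$ for $V\le W$, so once $j$ enters it stays. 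Then define $\hat g(a)$ to be the finite set of $j$ so decided positively by $r_{|a|}(V)$ for $V\in\mathcal{B}_k|Y_1$ — one must check this is finite, which follows because $j\in g(V)$ requires $j\in g(r_n(V))$ for some $n$ and the values at a fixed approximation length are bounded by a density/depth argument using axioms \textbf{A.2}--\textbf{A.3}. Monotonicity (1) and end-extension preservation (2) of $\hat g$ then follow directly from monotonicity of $g$ and the fact that $r_n(V)\sqsubset r_{n+1}(V)$, and property (3) is exactly the statement that $a$ decides $\dot g$, since $g(V)=\bigcup_n g(r_n(V))=\bigcup_n \hat g(r_n(V))$.

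For the end-extension preservation one needs a little care: the key structural input is that in $\mathcal{E}_k$ the approximations $r_n(V)$ really are initial segments under $\sqsubset$, and $\le_{\fin}$ is a partial order (as noted in the Remark after Theorem \ref{thm.ANW}), so the tree of approximations is well-behaved; this lets the fusion argument go through exactly as in the p-point case, with $[a,Z]$ playing the role of the sets $Z\setminus n$. I would also invoke Fact \ref{fact.useful} and Lemma \ref{lem.hered} to package the repeated shrinking into a single $Y\le Y_0$ on which the decision property holds for all approximations simultaneously, rather than doing an explicit diagonalization by hand.

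The main obstacle I anticipate is the finiteness of $\hat g(a)$ together with the bookkeeping that the decision property is genuinely achievable by the approximation $a$ and not just by some end-extension of it: one must ensure that shrinking $Z$ to decide $j$ at $a$ does not undo the decisions already made for $a'\sqsubset a$ or for other values $j'$. This is handled by ordering the pairs $(a,j)$ so that decisions are made in a sequence respecting $\sqsubset$ and using that each step only passes to a member of $[\depth_Z(a),Z]$, which preserves all coarser decisions; the hereditary-property framework of Definition \ref{hered} and Lemma \ref{lem.hered} is precisely designed to make this automatic. A secondary subtlety is translating between the forcing $(\mathcal{E}_k,\sse^{\Fin^{\otimes k}})$ and the stronger order $\le$ of the Ramsey space — but since every $X\in\mathcal{E}_k$ has a $\le$-smaller element below any $\sse^{\Fin^{\otimes k}}$-extension (by the density facts, Fact \ref{fact.dense} and its analogues), working with $\le$ loses no generality for producing a dense set of conditions.
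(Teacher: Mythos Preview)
Your plan has a genuine gap, and it stems from conflating the Ramsey-space order $\le$ with the forcing order $\sse^{\Fin^{\otimes k}}$ in a way that cannot be patched by the density remark at the end.

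The paper's proof does \emph{not} use the Abstract Nash--Williams Theorem or Lemma~\ref{lem.hered} at all; it is a direct forcing fusion.  At stage $n$ one fixes the finite tree $y_n$ built so far, enumerates the finitely many possible ``truncation patterns'' $z^p_n\sse y_n$ that a future $V\in\mathcal{E}_k$ could have, and for each pattern asks: does \emph{some} pair $A\sse^{\Fin^{\otimes k}}V$ with $\widehat V\cap y_n=z^p_n$ force $n-1\notin\dot g(V)$?  If yes, shrink so that every $V$ with that pattern lies inside the witness; by monotonicity the negative decision propagates to all such $V$.  If no, then every such $V$ already forces $n-1\in\dot g(V)$.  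The crucial point is that the integer being decided, $n-1$, is \emph{tied to the stage} of the construction.  This is what guarantees $\hat g(x)\sse\{0,\dots,m-1\}$ where $m$ is least with $x\sse y_m$, so $\hat g(x)$ is finite automatically.

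Your approach lacks any such coupling.  You propose to decide ``$j\in\dot g(V)$ for $V\sqsupset a$'' for all pairs $(a,j)$, but you give no bound relating $j$ to $|a|$; the sentence ``$j\in g(V)$ requires $j\in g(r_n(V))$ for some $n$'' is exactly the representation $g(V)=\bigcup_n\hat g(r_n(V))$ you are trying to establish, so invoking it is circular.  Moreover, the Nash--Williams Theorem partitions $\mathcal{AR}_n$ into pieces and homogenizes within the order $\le$; but whether $j\in\dot g(V)$ is forced is a property of the \emph{forcing condition}, not of the approximation $a=r_n(V)$, and shrinking in the $\le$-order to homogenize does not by itself decide forcing statements in the $\sse^{\Fin^{\otimes k}}$-order.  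The dichotomy the paper exploits (either a condition forces the negative, or the positive is forced densely below) is a forcing fact driven by monotonicity of $\dot g$, not a Ramsey-theoretic one.  To repair your argument you would have to reinstate the stage-by-stage coupling of $n$ with the integer being decided and carry out the shrinking in the forcing order---at which point you have reproduced the paper's construction and the Nash--Williams machinery plays no role.
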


It follows that every monotone cofinal map $g:\mathcal{G}_k\ra\mathcal{V}$ is represented by a monotone finitary map on the filter base $\mathcal{B}_k|Y$,
for some $Y\in\mathcal{G}_k$.

\begin{proof}
We force with $(\mathcal{E}_k,\sse^{\Fin^{\otimes k}})$, as it is forcing equivalent to 
$\mathcal{P}(\om^k)/\Fin^{\otimes k}$.
Let $\dot{g}$ be an $(\mathcal{E}_k,\sse^{\Fin^{\otimes k}})$-name 
such that $\forces$ ``$\dot{g}:\dot{\mathcal{G}}_k\ra\mathcal{P}(\om)$ is  monotone."
Recall that $\prec$ is a well-ordering on $\om^{\not\,\downarrow\le k}$ with order-type $\om$, and  that  $\lgl\vec{j}_m:m<\om\rgl$ denotes  the $\prec$-increasing  well-ordering of $\om^{\not\,\downarrow \le k}$.
Let $\mathcal{AR}^*$ denote the collection of all trees  of the form $\{Z(\vec{j}_m):m<n\}$, where  $Z\in\mathcal{E}_k$ and $m<\om$.
Note that for those $n<\om$ for which $\vec{j}_{n}$ has length $k$,
$\{Z(\vec{j}_m):m\le n\}$ is a member of $\mathcal{AR}$.

Fix an $A_0\in\mathcal{E}_k$, and let $X_{0}=A_0$.
We now begin the recursive construction of the sequences $(A_n)_{n<\om}$ and $(X_n)_{n<\om}$.
Let $ n\ge 1$ be given, and suppose   we have chosen $X_{n-1},A_{n-1}$.
Let 
$y_n= \{X_{n-1}(\vec{j}_m):m\le n\}$.
Let  $S_n$ denote
 the set of all $z\in\mathcal{AR}^*$ such that
$z\sse y_n$.
Enumerate the members of $S_n$ as $z_n^p$, $p<|S_n|$.
Let $X_n^{-1}=X_{n-1}$ and $A_n^{-1}=A_{n-1}$.
Suppose $p< |S_n|-1$ and we have chosen $X_{n}^{p-1}$ and $A^{p-1}_{n}$.

{\bf  If}
there are $V,A\in\mathcal{E}_k$  with $A\sse^{\Fin^{\otimes k}} V\sse  X_n^{p-1}$ such that
\begin{enumerate}
\item[(i)]
$\widehat{V}\cap y_n= z_n^p$;
\item[(ii)]
$A\forces n-1\not\in \dot{g}(V)$;
\end{enumerate}
{\bf then} 
take  $A_n^p$ and $V_n^p$ to be some such $A$ and $V$.
In this case, $A_n^p\forces n\not\in \dot{g}(V_n^p)$.
Hence, by monotonicity,
$A^p_n\forces n\not\in \dot{g}(V)$ for every $V\sse V^p_n$.
In this case, let 
$X_n^p$ be a member of $\mathcal{E}_k$ such that 
$X_n^p\sse X_n^{p-1}$,
$y_n\sqsubset \widehat{X}_n^p$, and 
whenever $W\sse X^p_n$ such that $\widehat{W}\cap y_n=z^p_n$,
then $W\sse V^p_n$.

{\bf Otherwise}, 
for all $V\sse X_n^{p-1}$ satisfying (i), 
there is no $A\sse^{\Fin^{\otimes k}} V$ which forces $n\not\in\dot{g}(V)$.
Thus, for all $V\sse X_n^{p-1}$ satisfying (i), 
 $V\forces n\in\dot{g}(V)$.
In this case, let $A^p_n=A^{p-1}_n$, $X^p_n=X^{p-1}_n$,
and define $V^p_n$ to be the largest  subset of $X^p_n$ in $\mathcal{E}_k$ such that $\widehat{V}^p_n\cap y_n=z^p_n$.

By this construction, we have that for each $n\ge 1$,
\begin{enumerate}
\item[$(*)$]
$A_n^p$ decides the statement ``$n-1\in\dot{g}(V)$",
\end{enumerate}
for each $V\sse X^p_n$ such that $\widehat{V}\cap y_n=z^p_n$.
Let $A_n=A_n^{|S_n|-1}$ and $X_n=X_n^{|S_n|-1}$.
This ends the recursive  construction of the $A_n$ and $X_n$.
\vskip.1in

Let $Y$ be the set of maximal nodes in the tree $\bigcup_{1\le n<\om} y_n$.
Note that 
$Y$ is a member of $\mathcal{E}_k$.
For $y\in\mathcal{AR}^*$ and $U\in\mathcal{E}_k$,
we let $U/y$ denote the set  $\{U(\vec{i}_m):m<\om$ and $\max U(\vec{i}_m)>\max y\}$.

\begin{claim}\label{claim.Ygood}
For each $V\sse Y$ in $\mathcal{E}_k$ and each $n\ge 1$, 
if $p$ is such that $z^p_n=\widehat{V}\cap y_n$,
then in fact $V\sse V^p_n$.
\end{claim}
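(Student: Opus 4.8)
The plan is to argue by induction on the stage $n \ge 1$, exploiting the nesting of the sequences $(X_n)$ and the way each $V^p_n$ was chosen to "absorb" all further subsets agreeing with $z^p_n$ on $y_n$. Fix $V \sse Y$ in $\mathcal{E}_k$ and $n \ge 1$, and let $p < |S_n|$ be the index with $z^p_n = \widehat{V} \cap y_n$; such a $p$ exists because $\widehat{V} \cap y_n \sse y_n$, so it lies in $S_n$. The key structural observation I would record first is that $Y \sse X_n$ for every $n$: indeed $Y$'s maximal nodes all come from the trees $y_m = \{X_{m-1}(\vec{j}_i) : i \le m\}$, and since $X_m \sse X_{m-1}$ for all $m$ and $y_m \sqsubset \widehat{X}^p_m$ at each substage, one checks that $\ran(Y) \sse \ran(X_n)$ for every $n$. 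Hence $V \sse Y \sse X_n$, and more precisely $V \sse X^p_n$: the construction arranges $y_n \sqsubset \widehat{X}^q_n$ for every substage $q$, and $V \sse Y$ forces $\widehat{V}$ to extend $y_n$ along the tree $\bigcup_{m} y_m$, so $V$ is a subset of $X^{p-1}_n$ with $\widehat{V} \cap y_n = z^p_n$, and then also a subset of $X^p_n$ since passing from $X^{p-1}_n$ to $X^p_n$ only restricts the branches not passing through $z^p_n$.

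With $V \sse X^p_n$ and $\widehat{V} \cap y_n = z^p_n$ in hand, the conclusion $V \sse V^p_n$ follows directly from the defining property of $X^p_n$ in both branches of the construction. In the "{\bf If}" branch, $X^p_n$ was explicitly chosen so that any $W \sse X^p_n$ with $\widehat{W} \cap y_n = z^p_n$ satisfies $W \sse V^p_n$; applying this with $W = V$ gives $V \sse V^p_n$. In the "{\bf Otherwise}" branch, $V^p_n$ was defined to be the largest subset of $X^p_n = X^{p-1}_n$ in $\mathcal{E}_k$ with $\widehat{V}^p_n \cap y_n = z^p_n$, so again any such $V$ is contained in it. Either way the claim holds.

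The main obstacle I anticipate is not the final one-line deduction but the verification that $V \sse X^p_n$, i.e.\ that the finitary tree data of $Y$ really does thread through the substage conditions $z^q_n$ in the correct way. This requires being careful that the choice of $X^p_n$ (which only guarantees the absorption property for subsets $W$ with $\widehat{W}\cap y_n = z^p_n$, leaving branches through other $z^q_n$ possibly outside $V^p_n$) is compatible with $V \sse Y$: one must check that $V$, being a subset of $Y$, cannot "use" a node of $X^{p-1}_n$ that was discarded when forming $X^p_n$, because $Y$ was built after stage $n$ entirely inside $X_n = X^{|S_n|-1}_n \sse X^p_n$. Making this precise is a routine but slightly fiddly unwinding of the recursive definitions of $y_n$, $S_n$, and the substage sets $X^p_n$; once it is in place, the claim is immediate.
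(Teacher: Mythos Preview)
Your approach is essentially the same as the paper's, and it is correct; the paper simply writes it in two lines: $V/z^p_n = V/y_n \sse Y/y_n \sse X^p_n$, and every extension of $z^p_n$ into $X^p_n$ lies in $V^p_n$.

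A few remarks on your writeup. First, you announce an induction on $n$ but never actually use one; the argument is direct for each fixed $n$, so drop that framing. Second, your justification for $V \sse X^p_n$ is more tangled than necessary and contains a wrong intermediate claim: it is \emph{not} true that ``passing from $X^{p-1}_n$ to $X^p_n$ only restricts the branches not passing through $z^p_n$'' --- in the ``\textbf{If}'' branch it is precisely the branches through $z^p_n$ that get channeled into $V^p_n$, and other branches may shrink as well. The clean reason you already have in hand is that the substage sets are $\sse$-decreasing, so $X_n = X^{|S_n|-1}_n \sse X^p_n$ for every $p < |S_n|$; combined with $V \sse Y \sse X_n$ (which you correctly observe, since every node of $Y$ beyond $y_n$ comes from some $X_m$ with $m \ge n$, and $y_n \sqsubset \widehat{X}^q_n$ at every substage) this gives $V \sse X^p_n$ immediately. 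With that in place, your case split on the two branches of the construction is exactly right and finishes the claim.
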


\begin{proof}
Let $V\sse Y$ and $n\ge 1$ be given, and let $p$ be such that $z^p_n=V\cap y_n$.
Then
$V/z^p_n= V/y_n\sse Y/y_n\sse X^p_n$,
and every extension of $z^p_n$ into $X^p_n$ is in fact in $V^p_n$.
\end{proof}

Our construction  of $Y$ was geared toward establishing  the following.

\begin{claim}\label{claim.dagger}
Let $V\sse Y$ be in $\mathcal{E}_k$,
and let  $1\le n<\om$ be given.
Let $p$ be the integer such that $\widehat{V}\cap y_n=z^p_n$.
Then 
$$ V\forces  n-1\in \dot{g}(V)  
\Llra 
Y\forces n-1\in\dot{g}(V^p_n).$$
\end{claim}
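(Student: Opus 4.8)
The plan is to exploit the equivalences $(*)$ and the "absorption" property recorded in Claim \ref{claim.Ygood}, which is exactly why the set $Y$ was built the way it was. Fix $V\sse Y$ in $\mathcal{E}_k$ and $1\le n<\om$, and let $p<|S_n|$ be the (unique) index with $\widehat{V}\cap y_n=z^p_n$. Recall from the construction that at stage $(n,p)$ one of two cases occurred: either there were $A\sse^{\Fin^{\otimes k}} V'\sse X_n^{p-1}$ with $\widehat{V'}\cap y_n=z^p_n$ and $A\forces n-1\notin\dot g(V')$, in which case $A^p_n=A$, $V^p_n=V'$, and $A^p_n\forces n-1\notin\dot g(W)$ for every $W\sse V^p_n$; or no such pair existed, in which case $W\forces n-1\in\dot g(W)$ for every $W\sse X^{p-1}_n$ with $\widehat{W}\cap y_n=z^p_n$, and $V^p_n$ was taken to be the largest such subset of $X^p_n$.

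First I would show the forward direction ($\Lra$). Suppose toward a contradiction that $V\forces n-1\in\dot g(V)$ but $Y\not\forces n-1\in\dot g(V^p_n)$; then some $W\le Y$ forces $n-1\notin\dot g(V^p_n)$, hence by monotonicity $W\forces n-1\notin\dot g(V')$ for every $V'\sse V^p_n$. We are in the first case of the construction at stage $(n,p)$ — more precisely, the first case must have occurred, since $W$ witnesses that the "Otherwise" alternative fails for $V^p_n\sse X^{p-1}_n$ with $\widehat{V^p_n}\cap y_n=z^p_n$ — so $A^p_n\forces n-1\notin\dot g(W')$ for all $W'\sse V^p_n$. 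By Claim \ref{claim.Ygood}, $V\sse V^p_n$; and since $A^p_n$ and $V$ are both members of $\mathcal{E}_k$ with $V\sse V^p_n$ and $A^p_n$ compatible with (indeed $\sse^{\Fin^{\otimes k}}$-below some subset of) $V^p_n$, we can find a common extension $Z\le A^p_n$ with $Z\sse V$; then $Z$ forces both $n-1\in\dot g(V)$ (by monotonicity applied to $V\forces n-1\in\dot g(V)$, using $Z\sse V$) and $n-1\notin\dot g(V)$, a contradiction. Hence $Y\forces n-1\in\dot g(V^p_n)$.

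For the reverse direction ($\Lla$), suppose $Y\forces n-1\in\dot g(V^p_n)$. If the first case had occurred at stage $(n,p)$, then $A^p_n\forces n-1\notin\dot g(V^p_n)$, and since $A^p_n\le X^{p-1}_n$ is compatible with $Y$ (both lie in $\mathcal{E}_k$ below $X^{p-1}_n$, and $Y/y_n\sse X^p_n$), a common extension would force $n-1\notin\dot g(V^p_n)$, contradicting $Y\forces n-1\in\dot g(V^p_n)$. So the "Otherwise" case occurred: every $W\sse X^{p-1}_n$ with $\widehat{W}\cap y_n=z^p_n$ forces $n-1\in\dot g(W)$. By Claim \ref{claim.Ygood}, $V\sse V^p_n\sse X^p_n\sse X^{p-1}_n$ and $\widehat{V}\cap y_n=z^p_n$, so $V\forces n-1\in\dot g(V)$, as desired. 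I expect the main obstacle to be bookkeeping the compatibility arguments correctly — specifically, verifying that $A^p_n$, $V^p_n$, $Y$, and the given $V$ really do have the common extensions used above, which rests on the fact (built into the construction) that $Y/y_n\sse X^p_n$ and on $(*)$, so that no genericity is smuggled in; once that is pinned down the rest is a routine deciding argument.
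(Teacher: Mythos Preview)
Your approach is close in spirit to the paper's, but the compatibility arguments you flag as ``the main obstacle'' are indeed where the proof falls short as written. In both directions you need that $A^p_n$ is compatible with $Y$ (and hence with $V\sse Y$), and your justifications --- ``both lie below $X^{p-1}_n$'' or ``both lie below $V^p_n$'' --- do not establish this: two conditions below a common condition need not be compatible in $(\mathcal{E}_k,\sse^{\Fin^{\otimes k}})$. Similarly, in your forward direction, for $W\le Y$ to witness failure of the ``Otherwise'' alternative you would need $W\sse^{\Fin^{\otimes k}} V^p_n$, which you have not shown.

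The paper resolves all of this in one stroke by invoking the relation $Y\sse^{\Fin^{\otimes k}} A^p_n$ directly. From this and $(*)$, $Y$ itself decides whether $n-1\in\dot g(V^p_n)$, and the two cases are then immediate: if $Y\forces n-1\notin\dot g(V^p_n)$, monotonicity plus $V\sse V^p_n$ (Claim~\ref{claim.Ygood}) plus $V\sse Y$ give $V\forces n-1\notin\dot g(V)$; if $Y\forces n-1\in\dot g(V^p_n)$, then the ``If'' case could not have occurred (else $A^p_n$, which lies above $Y$, would force the opposite), so we are in the ``Otherwise'' case and $V\forces n-1\in\dot g(V)$ since $V\sse X^{p-1}_n$ with $\widehat V\cap y_n=z^p_n$. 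This is your backward direction verbatim, but with the compatibility of $A^p_n$ and $Y$ supplied by $Y\le A^p_n$ rather than by an ad hoc argument; and it replaces your forward-direction contradiction chase entirely, since once $Y$ decides, ``$Y$ does not force YES'' simply means ``$Y$ forces NO''. So the missing ingredient in your write-up is precisely the single fact $Y\sse^{\Fin^{\otimes k}} A^p_n$; once you insert it, your argument collapses to the paper's.
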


\begin{proof} 
Given
 $V\sse Y$,
 $1\le n<\om$,
and  $p<|S_n|$ be such that $\hat{V}\cap y_n=z^p_n$.
By $(*)$,
$A^p_n$ decides whether or not $n-1$ is in $\dot{g}(V^p_n)$.
Since $Y\sse^{\Fin^{\otimes k}} A^p_n$,
$Y$ also decides whether or not $n\in\dot{g}(V^p_n)$.
Suppose $Y\forces  n-1\not\in\dot{g}(V^p_n)$.
Since  $V\sse V^p_n$,
by monotonicity of $\dot{g}$,  
we have
$Y\forces n-1\not\in\dot{g}(V)$.
Hence, also $V\forces n-1\not\in \dot{g}(V)$.
Now suppose that $Y\forces n-1\in\dot{g}(V_n^p)$.
Then for all pairs $A\sse^{\Fin^{\otimes k}} V'\sse  X_{n}^{p-1}$ 
satisying (i) and (ii),
we have that
$A\forces n-1\in \dot{g}(V')$.
In particular,
$V\forces n-1 \in\dot{g}(V)$.
\end{proof}

Now we define a finitary monotone  function $\hat{g}:\mathcal{AR}^*|Y \ra [\om]^{<\om}$ which $Y$ forces to represent $\dot{g}$ on the cofinal subset $\mathcal{B}_k|Y$ of $\dot{\mathcal{G}}_k$.
Given $x\in\mathcal{AR}^*|Y$, let $m\ge 1$ be the least integer such that $x\sse y_m$. 
For each $n\le m$, let $p_n$ be the integer such that 
$z^{p_n}_n=x\cap y_n$, and
define
\begin{equation}
\hat{g}(x)=\{n-1: n\le m\mathrm{\ and\ }Y\forces  n-1\in \dot{g}(V^{p_n}_n)\}.
\end{equation}
By definition, $\hat{g}$ is monotone and initial segment preserving.

\begin{claim}\label{claim.psigivesf}
If $Y$ is in $\mathcal{G}_k$, then
$\hat{g}$ represents $\dot{g}$ on $\mathcal{B}_k\re Y$.
\end{claim}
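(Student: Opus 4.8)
The plan is to show that for a generic $Y\in\mathcal{G}_k$, the value $g(V)$ agrees with $\bigcup_n\hat{g}(r_n(V))$ for every $V\in\mathcal{B}_k|Y$, which is condition (3) of Definition~\ref{def.41}; together with the monotonicity and end-extension preservation of $\hat{g}$ already noted after its definition, this will establish that $g\re(\mathcal{B}_k|Y)$ is basic. Since $\mathcal{B}_k|Y$ is cofinal in $\mathcal{G}_k$ whenever $Y\in\mathcal{G}_k$, and since $\hat g$ is defined on $\mathcal{AR}^*|Y$ but restricts to $\mathcal{AR}|Y$ in the obvious way (taking only the nodes of length $k$), this suffices.

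First I would fix $V\in\mathcal{B}_k|Y$ in $V[\mathcal{G}_k]$ and an integer $n\ge 1$, and aim to show $n-1\in g(V)$ if and only if $n-1\in\hat{g}(x)$ for all sufficiently large $x\sqsubset V$ (in fact, for any $x=\widehat{V}\cap y_m$ with $m\ge n$). Let $p=p_n$ be the index with $z^{p}_n=\widehat{V}\cap y_n$. By Claim~\ref{claim.Ygood}, $V\sse V^{p}_n$, so by monotonicity of $g$, if $n-1\in g(V^{p}_n)$ then $n-1\in g(V)$; this much is pure monotonicity in the extension. For the converse and the quantitative link to $\hat g$, I would invoke Claim~\ref{claim.dagger}: working in the ground model with names, $V\forces n-1\in\dot g(V)$ iff $Y\forces n-1\in\dot g(V^{p}_n)$. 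Translating through the generic $\mathcal{G}_k$ (using that $V,Y\in\mathcal{G}_k$, so a condition below them forces a statement about the generic iff the statement holds in $V[\mathcal{G}_k]$), this gives: $n-1\in g(V)$ iff $Y\forces n-1\in\dot g(V^{p}_n)$, which by the definition of $\hat g$ is exactly the condition $n-1\in\hat g(x)$ for $x=\widehat V\cap y_m$ with $m\ge n$ (noting $p_n$ is the same regardless of which such $m$ we take, since $z^{p_n}_n$ depends only on $\widehat V\cap y_n$).

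Assembling these, for each $n$ we get $n-1\in g(V)\Llra n-1\in\bigcup_{m}\hat g(\widehat V\cap y_m)$. Since every $r_j(V)$ is of the form $\widehat V\cap y_m$ for an appropriate $m$ (the length-$k$ truncations), and since $\hat g$ is monotone so $\bigcup_j\hat g(r_j(V))=\bigcup_m\hat g(\widehat V\cap y_m)$, this yields $g(V)=\bigcup_{j<\om}\hat g(r_j(V))$, which is condition (3). Hence $g\re(\mathcal{B}_k|Y)$ is basic.

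The main obstacle, and the step requiring the most care, is the bookkeeping in the transfer between forcing statements and statements in $V[\mathcal{G}_k]$: one must check that Claim~\ref{claim.dagger}, which is a statement about the name $\dot g$ and the conditions $V,Y,V^p_n$, correctly yields the membership equivalence in $V[\mathcal{G}_k]$, using genericity of $\mathcal{G}_k$ below $Y$ and the fact that $\dot g$ is forced to be a function on $\dot{\mathcal{G}}_k$ so that $\dot g(V)$ is a well-defined name once $V\in\dot{\mathcal{G}}_k$. One also needs that $Y\in\mathcal{G}_k$ (which is arranged by density, since the construction produces $Y$ as a fusion below an arbitrary $A_0$, and genericity hands us such a $Y$ in the generic filter), and that the index $p=p_n$ computed from $V$ inside the model matches the one used in defining $\hat g$. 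Once this is set up cleanly, the rest is the two applications of monotonicity and Claim~\ref{claim.dagger} described above.
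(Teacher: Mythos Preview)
Your proposal is correct and follows essentially the same approach as the paper: fix $V\in\mathcal{B}_k|Y$ and $n\ge 1$, let $p$ be such that $z^p_n=\widehat{V}\cap y_n$, combine Claims~\ref{claim.Ygood} and~\ref{claim.dagger} to obtain the equivalence $V\forces n-1\in\dot g(V)\Longleftrightarrow Y\forces n-1\in\dot g(V^p_n)$, and then unwind the definition of $\hat g$ to conclude $g(V)=\bigcup_m\hat g(\widehat V\cap y_m)$. The paper carries out the reduction from $\widehat V\cap y_m$ to $r_j(V)$ in the paragraph \emph{after} the claim rather than inside it, but your argument for that step (containment one way because each $r_j(V)$ sits inside some $\widehat V\cap y_m$, and the reverse containment by monotonicity since each $\widehat V\cap y_n$ is contained in some $r_l(V)$) matches the paper's. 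Your explicit mention of the passage from forcing statements to truths in $V[\mathcal{G}_k]$ via genericity is a point the paper leaves implicit; it is correct and does not change the argument.
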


\begin{proof}
Let $V\sse Y$ be in $\mathcal{G}_k$.
Let $n\ge 1$ be given and  let  $p$ such that $z^p_n=\widehat{V}\cap y_n$.
Then Claims  \ref{claim.Ygood}
and \ref{claim.dagger} imply that 
  $V\forces n-1\in \dot{g}(V)$,
 if and only if
$Y\forces n-1\in\dot{g}(V^p_n)$.
This in turn holds if and only if $n-1\in\hat{g}(\widehat{V}\cap y_n)$.
By the definition of $\hat{g}$, we see that 
for each $l<m$, $\hat{g}(\widehat{V}\cap y_l)\sse\hat{g}(\widehat{V}\cap y_m)$.
Thus, $V\forces n-1\in\dot{g}(V)$ if and only if $n-1$ is in $\hat{g}(\widehat{V}\cap y_m)$ for all $m\ge n$.
Therefore, 
$V\forces \dot{g}(V)=\bigcup_{n\ge 1}\hat{g}(\widehat{V}\cap y_n)$.
Thus, the claim holds.
\end{proof}

Finally, we can restrict $\hat{g}$ to have domain $\mathcal{AR}|Y$. 
Note that $\hat{g}$ on this restricted domain retains the property of being monotone and end-extension preserving.
It follows that $Y$ forces  $\hat{g}$ on $\mathcal{AR}$  to represent $g$ on $\mathcal{B}_k|Y$.
To see this, let $V\sse Y$ be in $\mathcal{B}_k$.
Then $\{\hat{g}(r_l(V)):l<\om\}$ is contained in $\{\hat{g}(\widehat{V}\cap  y_n):n<\om\}$, so 
$\bigcup \{\hat{g}(r_l(V)):l<\om\}\sse\bigcup \{\hat{g}(\widehat{V}\cap  y_n):n<\om\}$.
At the same time, for each $n$ there is an $l\ge n$ such that $r_l(V)\contains \widehat{V}\cap y_n$, so monotonicity of $\hat{g}$ implies that $\hat{g}(r_l(V))\contains \hat{g}(\widehat{V}\cap y_n)$.
Thus, $\bigcup \{\hat{g}(r_l(V)):l<\om\}\contains\bigcup \{\hat{g}(\widehat{V}\cap  y_n):n<\om\}$.
Therefore, $Y$ forces that $\hat{g}$ on domain $\mathcal{AR}|Y$  represents $\dot{g}$ on $\dot{\mathcal{B}}_k|Y$, and hence 
$\dot{g}$ is basic on $\dot{\mathcal{B}}_k|Y$.
\end{proof}


\section{The Tukey structure below the generic ultrafilters forced by $\mathcal{P}(\om^k)/\Fin^{\otimes k}$}\label{sec.Tukey}

The recent paper \cite{Blass/Dobrinen/Raghavan13} began the investigation of the Tukey theory of the generic ultrafilter $\mathcal{G}_2$ forced by  $\mathcal{P}(\om\times\om)/\Fin^{\otimes 2}$.
It was well-known that $\mathcal{G}_2$ is the Rudin-Keisler immediate successor of its projected selective ultrafilter $\pi_1(\mathcal{G}_2)$.
In  \cite{Blass/Dobrinen/Raghavan13},  Dobrinen and Raghavan (independently) proved that $\mathcal{G}_2$ is strictly below the maximum Tukey type $([\mathfrak{c}]^{<\om},\sse)$.
Further strengthening that result,  Dobrinen proved that $(\mathcal{G}_2,\contains)\not\ge_T([\om_1]^{<\om}\sse)$, irregardless of the size of the continuum in the generic model.
On the other hand, in Theorem 39 in \cite{Blass/Dobrinen/Raghavan13}, Dobrinen proved that $\mathcal{G}_2>_T\pi_1(\mathcal{G}_2)$.
Thus, we knew that the Tukey type of $\mathcal{G}_2$ is neither maximum nor minimum. 
It was left open what exactly is the structure of the Tukey types of ultrafilters Tukey reducible to $\mathcal{G}_2$.

We solve that open problem here  by showing that  $\mathcal{G}_2$ is the immediate Tukey successor of $\pi_1(\mathcal{G}_2)$, and moreover, each nonprincipal ultrafilter Tukey reducible to $\mathcal{G}_2$ is  Tukey equivalent to either  $\mathcal{G}_2$ or else $\pi_1(\mathcal{G}_2)$.
Thus, the initial Tukey structure of nonprincipal ultrafilters below $\mathcal{U}$ is exactly a chain of order-type 2.
Extending this, we further show that for all $k\ge 2$,
the ultrafilter $\mathcal{G}_k$ 
generic for $\mathcal{P}(\om^k)/\Fin^{\otimes k}$
has initial Tukey structure (of nonprincipal ultrafilters) exactly a chain of size $k$.
We also show that the Rudin-Keisler structures below $\mathcal{G}_k$ is 
exactly a chain of size $k$.
Thus, the Tukey structure below $\mathcal{G}_k$ is analogous to  the Rudin-Keisler structure below $\mathcal{G}_k$, even though each Tukey equivalence class contains many Rudin-Keisler equivalence classes.

Let $k\ge 2$.
As in the previous section, we let 
Let  $\mathcal{G}_k$  be a generic ultrafilter forced by 
 $\mathcal{P}(\om^k)/\Fin^{\otimes k}$, and let $\mathcal{B}_k$ denote $\mathcal{G}_k\cap\mathcal{E}_k$, where we are identifying $[\om]^k$ with the collection of strictly increasing sequences of natural numbers of length $k$.
Then $\mathcal{B}_k$  is a generic filter for 
 $(\mathcal{E}_k,\sse^{\Fin^{\otimes k}})$, and  $\mathcal{B}_k$ is cofinal in $\mathcal{G}_k$.

We begin by showing that each $\mathcal{G}_k$ has at least $k$-many distinct Tukey types of nonprincipal ultrafilters below it, forming a chain.
The proof of the next proposition is very similar to the proof of  Proposition 39 in \cite{Blass/Dobrinen/Raghavan13}, which showed that $\mathcal{G}_2>_T \pi_1(\mathcal{G}_2)$.

\begin{prop}\label{prop.1<T2}
Let $k\ge 2$ and $\mathcal{G}_k$ be generic for $\mathcal{P}(\om^k)/\Fin^{\otimes k}$.
 Then in $V[\mathcal{G}_k]$,
for each $l<k$, 
 $\pi_l(\mathcal{G}_k)<_T \pi_{l+1}(\mathcal{G}_k)$.
\end{prop}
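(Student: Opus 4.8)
The statement has two halves, and only the second is substantial. For $\pi_l(\mathcal{G}_k)\le_T\pi_{l+1}(\mathcal{G}_k)$ I would argue via Rudin--Keisler: the map on $[\om]^{l+1}$ sending each strictly increasing $(l+1)$-tuple to the tuple of its first $l$ entries carries $\pi_{l+1}(\mathcal{G}_k)$ onto $\pi_l(\mathcal{G}_k)$, so $\pi_l(\mathcal{G}_k)\le_{\mathrm{RK}}\pi_{l+1}(\mathcal{G}_k)$; and a Rudin--Keisler map $h$ always yields the monotone cofinal map $U\mapsto h[U]$, so $\pi_l(\mathcal{G}_k)\le_T\pi_{l+1}(\mathcal{G}_k)$. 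The content is therefore the strict direction $\pi_{l+1}(\mathcal{G}_k)\not\le_T\pi_l(\mathcal{G}_k)$, and the plan is to follow the template of Proposition~39 of \cite{Blass/Dobrinen/Raghavan13}, which treats $k=2$, $l=1$.

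First I would make the harmless reduction to the case $k=l+1$: $\pi_{l+1}(\mathcal{G}_k)$ is itself generic for $\mathcal{P}(\om^{l+1})/\Fin^{\otimes(l+1)}$ and $\pi_l(\mathcal{G}_k)=\pi_l(\pi_{l+1}(\mathcal{G}_k))$, so it suffices to show, for each $m\ge 2$, that $\mathcal{G}_m\not\le_T\pi_{m-1}(\mathcal{G}_m)$. Assume toward a contradiction that in $V[\mathcal{G}_m]$ there is a monotone cofinal $g\colon\pi_{m-1}(\mathcal{G}_m)\to\mathcal{G}_m$. Set $W:=V[\pi_{m-1}(\mathcal{G}_m)]$; then $\mathcal{U}:=\pi_{m-1}(\mathcal{G}_m)$ is fixed in $W$, $\mathcal{G}_m$ is generic over $W$ for the quotient $\mathbb{Q}=\bigl(\mathcal{P}(\om^m)/\Fin^{\otimes m}\bigr)/\mathcal{U}$, and $g$ is given by a $\mathbb{Q}$-name. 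The first step is to get a finitary representation of $g$ on a filter base: running the fusion from the proof of Theorem~\ref{thm.canon} (for $m=2$ one may instead invoke that $\pi_1(\mathcal{G}_2)$ is selective in $V[\mathcal{G}_2]$, so that monotone cofinal maps out of it are continuous on a cofinal set, exactly as in \cite{Blass/Dobrinen/Raghavan13, Dobrinen/Todorcevic11}), I would produce a $Y$ in a filter base $\mathcal{B}_{m-1}$ of $\pi_{m-1}(\mathcal{G}_m)$ consisting of members of $\mathcal{E}_{m-1}$, and a monotone, end-extension preserving $\hat g\colon\mathcal{AR}|Y\to[\om^m]^{<\om}$ with $g(V)=\bigcup_n\hat g(r_n(V))$ for all $V\in\mathcal{B}_{m-1}|Y$; the decisive feature being that each value $\hat g(s)$ depends only on the finite $(m-1)$-dimensional approximation $s$. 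Shrinking $Y$ further by the Ramsey-classification Theorem~\ref{thm.rc} for $\mathcal{E}_{m-1}$, one may take this dependence to be canonical.

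The contradiction would then come from a diagonalization exploiting the dimension gap. Every member of $\mathcal{G}_m$ is $\Fin^{\otimes m}$-positive, hence has infinitely many $\Fin^{\otimes(m-1)}$-positive fibers in the last coordinate; using genericity of $\mathcal{G}_m$ for $\mathbb{Q}$ over $W$, I would build (as a $\mathbb{Q}$-name) an $X\in\mathcal{G}_m$ whose last-coordinate fibers are ``generically sparse'' in a way that the fixed $(m-1)$-dimensional finitary map $\hat g$ cannot track: for each $V\in\mathcal{B}_{m-1}|Y$ one arranges $\hat g(r_n(V))\not\sse X$ for cofinally many $n$, whence $g(V)\not\sse X$ for every such $V$, contradicting cofinality of $g$. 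I expect this last step to be the main obstacle: one must show precisely that a monotone, end-extension preserving finitary map built from $(m-1)$-dimensional approximations can never have $\sse$-cofinal range in the $m$-dimensional ultrafilter $\mathcal{G}_m$, and then carry out the genericity construction so that the defeating $X$ is forced into $\mathcal{G}_m$. A secondary technical nuisance is that $g$ lives in $V[\mathcal{G}_m]$, a proper extension of $V[\pi_{m-1}(\mathcal{G}_m)]$, so the fusion producing $\hat g$ must be organized over the quotient $\mathbb{Q}$ rather than over $V$ directly; this is routine but needs care. With these in place, the argument for a fixed $l<k$ is that of \cite{Blass/Dobrinen/Raghavan13} for $l=1$, $k=2$, with extra bookkeeping for the lower coordinates.
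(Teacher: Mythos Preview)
Your setup for the easy direction is fine and matches the paper. The gap is in the strict direction: you correctly isolate the need to defeat any monotone cofinal $g:\pi_l(\mathcal{G}_k)\to\pi_{l+1}(\mathcal{G}_k)$, and you correctly reach a finitary representative $\hat g$ (or $f$) depending only on the $l$-dimensional projection. But then you propose a ``diagonalization exploiting the dimension gap'' via genericity over the intermediate model, openly flag it as ``the main obstacle'', and do not carry it out. That is precisely where the content lies, and the mechanism you sketch (building a generically sparse $X\in\mathcal{G}_m$ so that $\hat g(r_n(V))\not\sse X$ cofinally often) is both vague and needlessly indirect.

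The paper's argument avoids all of your extra machinery---no reduction to $k=l+1$, no intermediate model, no quotient forcing, no Ramsey-classification---and replaces your missing diagonalization by a two-line splitting trick. Once $f$ is a ground-model monotone map on $\pi_l$-projections (the forcing is $\sigma$-closed, so the finitary $\hat g$ and hence $f$ live in $V$), one argues by cases below a fixed $X$. If some $Y\le X$ makes $f(\pi_l(Y))\cap\pi_{l+1}(Y)$ fail to contain a member of $\pi_{l+1}(\mathcal{E}_k)$, or if some $Y$ has no $Z\le Y$ with $f(\pi_l(Z))\sse\pi_{l+1}(Y)$, then that $Y$ already forces $f$ not to be cofinal. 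Otherwise both closure properties (a) and (b) hold below $X$, and now comes the point you are missing: pick $Y,W\le X$ with $\pi_l(Y)=\pi_l(W)$ but $\pi_{l+1}(Y)\cap\pi_{l+1}(W)=\emptyset$. Using (b) twice and monotonicity of $f$ on $\pi_l$-sets, one shrinks to $W''\le W$ with $f(\pi_l(W''))\sse\pi_{l+1}(Y)\cap\pi_{l+1}(W)=\emptyset$, so $W''$ forces $f$ to output the empty set. This is the whole endgame; no genericity-over-$W$ construction is needed.
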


\begin{proof}
Since the map $\pi_l:\pi_{l+1}''\bW_k\ra\pi_l''\bW_k$ witnesses that $\pi_l(\mathcal{B}_k)\le_{RK}\pi_{l+1}(\mathcal{B}_k)$,
it follows that 
 $\pi_l(\mathcal{B}_k)\le_T\pi_{l+1}(\mathcal{B}_k)$.
Thus, it remains only to show that these are not Tukey equivalent.
Let $\dot{g}:\pi_l(\mathcal{G}_k)\ra\pi_{l+1}(\mathcal{G}_k)$ be a $(\mathcal{E}_k,\sse^{\Fin^{\otimes k}})$-name for a monotone map.
Without loss of generality, we may identify $\pi_{l+1}''\bW_k$ with $\om$.

Noting that $\pi_l(\mathcal{E}_k):=\{\pi_l(X):X\in\mathcal{E}_k\}$ is isomorphic to $\mathcal{E}_l$, and that $\pi_l(\mathcal{E}_k)$ is regularly embedded into $\mathcal{E}_k$, 
it follows by a slight modification of the proof of Theorem \ref{thm.canon} that 
there is some $A\in\mathcal{B}_k$ such that $A$ forces that $\dot{g}\re\pi_l(\mathcal{B}_k|A)$ is basic.
Thus, in $V[\mathcal{G}_k]$,
$g$ is represented by finitary monotone initial segment preserving map $\hat{g}$ defined on $\mathcal{AR}|Y$.
Letting $f$ denote the map on $\{\pi_l(X):X\in\mathcal{E}_k|A\}$ determined by $\hat{g}$,
we see that $f$ is actually in the ground model since
$(\mathcal{E}_k,\sse^{\Fin^{\otimes k}})$ is a $\sigma$-closed forcing.

Let $X\in\mathcal{B}_k|A$ be given.
If there is a $Y\sse X$  in $\mathcal{B}_k$ such that $f(\pi_l(Y))\cap \pi_{l+1}(Y)$ does not contain a member of $\pi_{l+1}(\mathcal{E}_k)$,
then $Y$ forces that $f(\pi_l(\dot{\mathcal{B}}_k))\not\sse\pi_{l+1}(\dot{\mathcal{B}}_k)$.
Otherwise, 
(a) for all $Y\sse X$ in $\mathcal{B}_k$, $f(\pi_l(Y))\cap \pi_{l+1}(Y)$ is a member of $\pi_{l+1}(\mathcal{E}_k)$.

If there is a $Y\sse X$ in $\mathcal{B}_k$ such that for all $Z\sse Y$ in $\mathcal{B}_k$, $f(\pi_l(Z))\not\sse \pi_{l+1}(Y)$,
then $Y$ forces that $f\re\pi_l(\dot{\mathcal{B}}_k)$ is not cofinal into $\pi_{l+1}(\dot{\mathcal{B}}_k)$.
Otherwise,
(b) for all $Y\sse X$ in $\mathcal{B}_k$, there is a $Z\sse Y$ in $\mathcal{B}_k$ such that $f(\pi_l(Z))\sse \pi_{l+1}(Y)$.

Now we are in the final case that (a) and (b) hold.
Fix $Y,W\sse X$ such that $\pi_l(Y)=\pi_l(W)$ but $\pi_{l+1}(Y)\cap \pi_{l+1}(W)=\emptyset$.
Take $Y'\sse Y$ such that $f(\pi_l(Y'))\sse\pi_{l+1}(Y)$.
Take $W'\sse W$ such that $\pi_l(W')\sse\pi_l(Y')$;
then take $W''\sse W$ such that $f(\pi_l(W''))\sse \pi_{l+1}(W')$.
Since  $\pi_l(W'')\sse\pi_l(Y')$ and $f$ is monotone, we have that
 $f(\pi_l(W''))\sse f(\pi_l(Y'))$.
Thus, $f(\pi_l(W''))\sse \pi_{l+1}(Y)$.
On the other hand,  $f(\pi_l(W''))\sse \pi_{l+1}(W')$, which is contained in $\pi_{l+1}(W)$.
Hence, $f(\pi_l(W''))\sse \pi_{l+1}(Y\cap W)$, which is empty.
Thus, $W''$ forces $f(\pi_l(Z))$ to be the emptyset, for each $Z\sse W''$,
so $W''$ forces $f$ not to be a cofinal map.
\end{proof}

Applying Theorems \ref{thm.rc} and \ref{thm.canon}, we shall prove the main theorem of this paper.

\begin{thm}\label{thm.main}
Let  $k\ge 2$,  and let $\mathcal{G}_k$ be generic for the forcing $\mathcal{P}(\om^k)/\Fin^{\otimes k}$.
If $\mathcal{V}\le_T\mathcal{G}_k$ and $\mathcal{V}$ is nonprincipal,
then  $\mathcal{V}\equiv_T\pi_l(\mathcal{G}_k)$,
 for some $l\le k$.
\end{thm}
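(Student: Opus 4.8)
The strategy is to start with an arbitrary nonprincipal $\mathcal{V}\le_T\mathcal{G}_k$, reduce to the case of a monotone cofinal map $g:\mathcal{G}_k\to\mathcal{V}$, invoke Theorem \ref{thm.canon} to replace $g$ by a basic (monotone, end-extension preserving, finitary) map $\hat{g}$ defined on $\mathcal{AR}|Y$ for some $Y\in\mathcal{B}_k$, and then analyze $\hat g$ using the Ramsey-classification machinery of Theorem \ref{thm.rc}. Concretely: since $\mathcal{V}$ is generated by $g''(\mathcal{B}_k|Y)$, and $\hat g$ is finitary, standard arguments (as in \cite{Dobrinen/Todorcevic14}, \cite{Blass/Dobrinen/Raghavan13}) show that $\mathcal{V}$ is isomorphic (in the Tukey sense) to the ultrafilter generated by the sets $\{\hat g(a):a\in\mathcal{F}\}$ for an appropriate front $\mathcal{F}$ on $\mathcal{E}_k|Y$; more precisely there is a front $\mathcal{F}$ and a map sending $a\mapsto \hat g(a)$ so that the $\le_T$-cofinal type of $\mathcal{V}$ is captured by the equivalence relation $R$ on $\mathcal{F}$ given by $a\,R\,b\iff \hat g(a)=\hat g(b)$. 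One first finds, by a fusion/diagonalization (Abstract Ellentuck / Abstract Nash-Williams), a member $Z\le Y$ on which $\hat g$ restricted to the relevant front is "stabilized," so that this $R$ is a genuine equivalence relation on a front $\mathcal{F}|Z$.

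Next, apply Theorem \ref{thm.rc} to $R$ on $\mathcal{F}|Z$: there is $B\le Z$ and an irreducible (inner Nash-Williams) map $\vp$ canonizing $R$ on $\mathcal{F}|B$. By Definition \ref{def.innerNW}, $\vp(a)=\pi_{\vec l}(a)$ for some $\vec l\in(k+1)^{|a|}$, and by the Nash-Williams condition the "types" $\vec l$ are coherent along $\sqsubset$. The key structural observation is then: because $\vp$ is inner and Nash-Williams, the image $\vp''\mathcal{F}$ is essentially controlled by a single coordinate $l^*\le k$ — namely, the Nash-Williams condition forces that once $\vp(a)$ is determined, no proper restriction of $a$ already determines $\vp$, so the "highest" projection level appearing is stable, and one shows $\vp(a)$ is (up to a Tukey-irrelevant finite part) determined by $\pi_{l^*}(a)$ where $l^*=\max$ of the levels used. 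This identifies the cofinal type of $\{\vp(a):a\in\mathcal{F}|B\}$ with that of $\pi_{l^*}(\mathcal{G}_k)$: if $l^*=0$ then $\vp$ is essentially constant on a front and $\mathcal{V}$ is principal (excluded by hypothesis, unless the front has rank forcing a countable cofinal type, which again contradicts nonprincipality or gives $\pi_0$); if $1\le l^*\le k$ then $\mathcal{V}\equiv_T\pi_{l^*}(\mathcal{G}_k)$. For the inequality $\mathcal{V}\le_T\pi_{l^*}(\mathcal{G}_k)$ one uses that $\vp(a)$ depends only on $\pi_{l^*}(a)$; for $\pi_{l^*}(\mathcal{G}_k)\le_T\mathcal{V}$ one uses that $\vp$ being irreducible means $\pi_{l^*}(a)$ is recoverable from $\vp(a)$ on $\mathcal{F}|B$, so the map $\vp(a)\mapsto\pi_{l^*}(a)$ is a cofinal map witnessing the reverse reduction, combined with Proposition \ref{prop.1<T2} to place things in the chain.

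The main obstacle I expect is the passage from "the canonizing map $\vp$ uses projection levels $\vec l$ coherently" to "$\mathcal{V}$ is Tukey-equivalent to $\pi_{l^*}(\mathcal{G}_k)$ for the single top level $l^*$": one must show that the non-maximal coordinates in $\vec l$ contribute nothing to the cofinal (Tukey) type, i.e. that $\vp(a)$ and $\pi_{l^*}(a)$ generate the same filter up to finite modifications, which requires carefully exploiting the Nash-Williams property together with the structure of $\mathcal{E}_k$ (in particular how $\bW_k$ is built from $\om^{\not\downarrow k}$ and how $\pi_{l^*}$ interacts with the finitization $r$). A secondary subtlety is handling fronts of rank $>1$: one must argue, via Theorem \ref{thm.irred} and the coherence of the levels along $\sqsubset$, that the "effective" level $l^*$ is constant across the front (after further shrinking), so that no combination of distinct levels on incomparable nodes produces a genuinely new Tukey type. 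Both points parallel the corresponding arguments for the spaces in \cite{Dobrinen/Todorcevic14} and \cite{Dobrinen/Mijares/Trujillo14}, but the high-dimensional structure of $\mathcal{E}_k$ and the failure of continuity (compensated by Theorem \ref{thm.canon}) require the book-keeping to be redone with the projections $\pi_l$ in place.
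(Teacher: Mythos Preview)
Your overall architecture matches the paper's proof exactly: reduce to a monotone cofinal map, apply Theorem~\ref{thm.canon} to get a basic finitary $\hat g$, extract a front $\mathcal{F}$ and a map $f:\mathcal{F}\to\omega$ (the paper uses $f(a)=\min\hat g(a)$, so that $\mathcal{V}$ is the Rudin--Keisler image $f(\langle\mathcal{B}\!\restriction\!\mathcal{F}\rangle)$), canonize via Theorem~\ref{thm.rc} to an irreducible $\vp$, and then identify the Tukey type of $\vp(\mathcal{G}_k\!\restriction\!\mathcal{F})$ as some $\pi_l(\mathcal{G}_k)$.

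The place where your sketch diverges from what actually works is the mechanism for isolating $l^*$ and proving the Tukey equivalence. You write that the Nash--Williams property of $\vp$ forces a single stable top level $l^*$ and that ``$\vp(a)$ depends only on $\pi_{l^*}(a)$.'' Neither of these holds as stated: the irreducible $\vp$ may genuinely use several projection levels across the front, and $\vp(a)$ is \emph{not} a function of $\pi_{l^*}(a)$ alone. The paper does not try to prove such a pointwise statement. Instead it runs a further Ramsey homogenization: for each pair $j\le l\le k$ one colors $\mathcal{F}$ by whether some coordinate $n\in N^k_j$ has $\vp_{r_n(a)}=\pi_l$, takes $X$ homogeneous for all of these, and lets $l$ be the maximal level that survives. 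Below such $X$ one gets the filter-level statement $(*)_l$: for every $Y\le X$ the set $\bigcup\vp(\mathcal{F}|Y)$ is contained in $\bigcup_{i\le l}\pi_i(Y)$, and for every $Z\le Y$ there is $Z'\le Z$ with $\pi_l(Z')\subseteq\vp(\mathcal{F}|Y)$. The Tukey equivalence $\vp(\mathcal{G}_k\!\restriction\!\mathcal{F})\equiv_T\pi_l(\mathcal{G}_k)$ is then proved by showing the two \emph{filters} are mutually cofinal (via $\vp(\mathcal{F}|Y)\mapsto(\bigcup\vp(\mathcal{F}|Y))\cap\pi_l(\mathcal{E}_k)$), not by any pointwise dependence of $\vp(a)$ on $\pi_{l^*}(a)$. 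Your identification of this as the main obstacle is correct, but the resolution you propose would not go through; replace it with the filter-cofinality argument via the homogenization above.
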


\begin{proof}
Let $\mathcal{G}_k$ be a $\mathcal{P}(\om^k)/\Fin^{\otimes k}$ generic ultrafilter on $\om^k$,
and let $\mathcal{B}$ denote $\mathcal{B}_k$.
Let $\mathcal{V}$ be a nonprincipal  ultrafilter on base set $\om$ which is Tukey reducible to $\mathcal{G}_k$.
Then there is a monotone cofinal map
 $g:\mathcal{G}_k\ra\mathcal{V}$  witnessing that $\mathcal{V}$ is Tukey reducible to $\mathcal{G}_k$.
By Theorem \ref{thm.canon}, there is an $A\in\mathcal{B}$ 
such that  $g$ on $\mathcal{B}|A$ is basic, represented by 
 a finitary, monotone, end-extension preserving  map 
$\hat{g}:\mathcal{AR}|A\ra [\om]^{<\om}$.

For each $X\in\mathcal{B}|A$,  let $a_X=r_{n}(X)$ where $n$ is   least   such that $\hat{g}(r_{n}(X))\ne\emptyset$.
Let $\mathcal{F}=\{a_X:X\in\mathcal{B}|A\}$.
Note that $\mathcal{F}$ is a front on $\mathcal{B}|A$.
For  $X\in\mathcal{B}|A$, recall that $\mathcal{F}|X$ denotes $\{a\in\mathcal{F}:a\le_{\fin} A\}$.
We let $\lgl \mathcal{B}\re\mathcal{F}\rgl$ denote  the filter on the base set $\mathcal{F}$ generated by the collection of sets $\mathcal{F}|X$, $X\in\mathcal{B}|A$.
Define $f:\mathcal{F}\ra\om$ by $f(a)=\min \hat{g}(a)$.
By genericity of $\mathcal{G}_k$ and arguments for Facts 5.3 and 5.4 and Proposition 5.5 in \cite{Dobrinen/Todorcevic14},
it follows that
 $\mathcal{V}=f(\lgl \mathcal{B}\re\mathcal{F}\rgl)$;
that is, $\mathcal{V}$ is the ultrafilter which is the Rudin-Keisler image via $f$ of the filter $\lgl \mathcal{B}\re\mathcal{F}\rgl$.

By Theorem \ref{thm.rc} and genericity of $\mathcal{G}_k$, there is a 
$B\in\mathcal{B}|A$ such that $f\re\mathcal{F}|B$ is canonical, represented by an inner Nash-Williams function $\vp$.
Recall from the proof of Theorem \ref{thm.rc} that $\vp$ is a projection function, where 
$\vp(a)=\bigcup\{\pi_{r_{i}(a)}(a(i)): i< |a|\}$, for $a\in \mathcal{F}|B$.

For  $l\le k$ and $X\in\mathcal{E}_k|B$,
we say that $(*)_l(X)$ holds if and only if
for each $Y\le X$, for each $Z\le Y$, there is a $Z'\le Z$ such that $\pi_l(Z')\sse\vp(\mathcal{F}|Y)$
and, if $l<k$, then also $\pi_{l+1}(X)\cap \vp(\mathcal{F}|X)=\emptyset$.

\begin{claim}\label{claim.forces}
If $(*)_l(X)$ and $\neg(*)_{l+1}(X)$,
then $X$ forces $\vp(\mathcal{G}_k|\mathcal{F})\equiv_T\pi_l(\mathcal{G}_k)$.
\end{claim}

\begin{proof}
Let  $l\le k$ be given and suppose that $(*)_l(X)$  holds, and if $l<k$, then also 
$\neg(*)_{l+1}(X)$.
By definition of $\vp$,
we know that  for each $Y\le X$, $\vp(\mathcal{F}|Y)\sse\bigcup_{i\le k}\pi_i(Y)$.
By $\neg(*)_{l+1}(X)$,
we have that $\vp(\mathcal{F}|Y)$ must actually be contained in $\bigcup_{i\le l}\pi_i(Y)$.
$(*)_l(X)$ implies that $X$ forces that for each $Y\le X$ in $\dot{\mathcal{G}}_k$, there is a $Z'\le Y$ in $\dot{\mathcal{G}}_k$ such that 
$\pi_l(Z')\sse\vp(\mathcal{F}|Y)$.
Then $\pi_l(\mathcal{G}_k)$ is actually equal to 
the filter generated by the sets $(\bigcup\vp(\mathcal{F}|Y))\cap\pi_l(\mathcal{E}_k)$,  $Y\in\mathcal{G}_k$, since they are cofinal in each other.
Moreover, the filter generated by the sets $(\bigcup\vp(\mathcal{F}|Y))\cap\pi_l(\mathcal{E}_k)$,  $Y\in\mathcal{G}_k$, is Tukey equivalent to $\vp(\mathcal{G}_k|\mathcal{F})$, as can be seen by the map
$\vp(\mathcal{F}|Y)\mapsto (\bigcup\vp(\mathcal{F}|Y))\cap\pi_l(\mathcal{E}_k)$, 
which is easily seen to be both  cofinal and Tukey.
\end{proof}

\begin{claim}\label{claim.existsl}
For each $W\in\mathcal{E}_k|B$, there is an $X\le W$ and an $l\le k$ such that 
$(*)_l(X)$ holds.
\end{claim}

\begin{proof}
Let $W\in\mathcal{E}_k|B$ be given. 
For all pairs $j\le l\le k$, define
\begin{equation}
\mathcal{H}^j_l=\{a\in\mathcal{F}|W:\exists n<|a|(n\in N^k_{j}\wedge\vp_{r_n(a)}=\pi_l)\}.
\end{equation}
Take $X\le W$ homogeneous for $\mathcal{H}^j_l$ for all $j\le l\le k$.
Let $l\le k$ be maximal such that, for some $j\le l$,
$\mathcal{F}|X\sse\mathcal{H}^j_l$.
We point out that $\mathcal{F}|W\sse\mathcal{H}^0_0$, so such an $l\le k$ exists.
We claim that $(*)_l(X)$ holds.

Note that, if $l<k$, then for all $l<l'\le k$,
$(\mathcal{F}|X)\cap\mathcal{H}^j_{l'}=\emptyset$, whenever $j\le l'$.
Thus, for each $a\in\mathcal{F}|X$,
there is no $n<|a|$ for which $\vp_{r_n(a)}=\pi_{l'}$.
Therefore, for each $a\in\mathcal{F}|X$, $\vp(a)\sse\bigcup_{i\le l}\pi_i(X)$.

Now let $j\le l$ such that $\mathcal{F}|X\sse\mathcal{H}^j_l$, and let 
 $Z\le Y\le X$ be given.
If there is a $C\in\mathcal{E}_j$ such that 
$C\sse\{\pi_j(a(n)):a\in\mathcal{F}|Z$, $n<|a|$, $n\in N^k_{j}$, and $\vp_{r_n(a)}=\pi_l\}$,
then there is a $Z'\le Z$ such that $\pi_j(Z')\sse C$.
It follows that $\pi_l(Z')\sse C\sse\vp(\mathcal{F}|Z)$.

Such a $C\in\mathcal{E}_j$ must exist, for if there is none,
then there is a $C'\in\mathcal{E}_j$ such that 
$C'\cap\{\pi_j(a(n)):a\in\mathcal{F}|Z$, $n<|a|$, $n\in N^k_j=\emptyset$.
In this case there is a $Z'\le Z$ such that $\pi_j(Z')\sse C'$.
But then $\pi_l(Z')\cap \vp(\mathcal{F}|Z)=\emptyset$, contradicting that $\mathcal{F}|X\sse\mathcal{H}^j_l$.
Thus, there is a $Z'\le Z$ such that $\pi_l(Z')\sse\vp(\mathcal{F}|Z)$, which in turn is contained in $\vp(\mathcal{F}|Y)$.
Therefore, $(*)_l(X)$ holds.
\end{proof}

Thus, by Claims \ref{claim.forces} and \ref{claim.existsl},
it is dense in $\mathcal{E}_k$  to force that $\vp(\mathcal{G}_k|\mathcal{F})\equiv_T\pi_l(\mathcal{G}_k)$ for some $l\le k$.
\end{proof}

We finish by showing that each ultrafilter Rudin-Keisler reducible to $\mathcal{G}_k$ is actually Rudin-Keisler equivalent to $\pi_l(\mathcal{G}_k)$ for some $l\le k$.

\begin{thm}\label{thm.rkinitstructure}
Let  $k\ge 2$,  and let $\mathcal{G}_k$ be generic for the forcing $\mathcal{P}(\om^k)/\Fin^{\otimes k}$.
If $\mathcal{V}\le_{RK}\mathcal{G}$ and $\mathcal{V}$ is nonprincipal,
then  $\mathcal{V}\equiv_{RK}\pi_l(\mathcal{G}_k)$,
 for some $l\le k$.
\end{thm}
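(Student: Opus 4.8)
The plan is to exploit the $\sigma$-closure of the forcing in order to reduce the whole statement to the finite-rank canonization of Corollary~\ref{cor.er} applied to the smallest possible front, $\mathcal{AR}_1$. Fix a function $h:\om^k\ra\om$ with $\mathcal{V}=h(\mathcal{G}_k)$. Since $\om^k\setminus[\om]^k\in\Fin^{\otimes k}$ and since $\bW_k$ is the largest condition of $(\mathcal{E}_k,\sse^{\Fin^{\otimes k}})$, and hence lies in $\mathcal{G}_k$, we may replace $h$ by its restriction to $\bW_k$; and because $(\mathcal{E}_k,\sse^{\Fin^{\otimes k}})$ is $\sigma$-closed and $\bW_k$ is countable, this $h:\bW_k\ra\om$ already belongs to the ground model $V$. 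Thus the combinatorial core of the argument is a ZFC fact about $h$ which we prove in $V$, invoking genericity only at the end.

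Working in $V$, define an equivalence relation $R$ on the front $\mathcal{AR}_1$ of $\mathcal{E}_k$ by $a\,R\,b\iff h(a(0))=h(b(0))$. Applying Corollary~\ref{cor.er} (in its relativized form, which follows from Theorem~\ref{thm.rc}) with $n=1$, for each $W\in\mathcal{E}_k$ there are $A\le W$ and an $l\le k$ such that $a\,R\,b\iff\pi_l(a(0))=\pi_l(b(0))$ for all $a,b\in\mathcal{AR}_1\,|\,A$. The point is that $\mathcal{AR}_1\,|\,A$ is exactly $\{\{v\}:v\in\ran(A)\}$: every branch $v\in\ran(A)$ occurs as the leftmost branch $X(\vec{i}_0)$ of some $X\le A$, obtained by building an $\mathcal{E}_k$-tree below $A$ whose leftmost spine is the chain of initial segments of $v$ in $\widehat{A}$ and whose remaining nodes are filled in from $\ran(A)$, exactly as in the density constructions of Section~\ref{sec.defR}. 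Hence $h(u)=h(v)\iff\pi_l(u)=\pi_l(v)$ for all $u,v\in\ran(A)$; equivalently, $h\re\ran(A)=\iota_A\circ(\pi_l\re\ran(A))$ for a bijection $\iota_A:\pi_l[\ran(A)]\ra h[\ran(A)]$. Since such an $A$ exists below every $W\in\mathcal{E}_k$, the collection of such $A$ is dense in the forcing, and so by genericity we may fix one with $A\in\mathcal{G}_k$.

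Now read off the Rudin--Keisler type. Because $h$ and $\iota_A\circ\pi_l$ agree on $\ran(A)$, we have $h^{-1}(S)\cap\ran(A)=\pi_l^{-1}(\iota_A^{-1}(S))\cap\ran(A)$ for every $S\sse\om$; since $\ran(A)\in\mathcal{G}_k$ this gives $h^{-1}(S)\in\mathcal{G}_k\iff\pi_l^{-1}(\iota_A^{-1}(S))\in\mathcal{G}_k\iff\iota_A^{-1}(S)\in\pi_l(\mathcal{G}_k)$, the last equivalence being the definition of the projected ultrafilter together with $\pi_l[\ran(A)]\in\pi_l(\mathcal{G}_k)$. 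Thus $\mathcal{V}$ is the image of $\pi_l(\mathcal{G}_k)$ under the bijection $\iota_A$ between a set of $\pi_l(\mathcal{G}_k)$ and a set of $\mathcal{V}$, so $\mathcal{V}\equiv_{RK}\pi_l(\mathcal{G}_k)$. Finally, $l=0$ would make $h\re\ran(A)$ constant and $\mathcal{V}$ principal, so in fact $1\le l\le k$.

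The only genuinely delicate step I expect is the identification $\mathcal{AR}_1\,|\,A=\{\{v\}:v\in\ran(A)\}$, i.e.\ the fact that the tree structure defining $\mathcal{E}_k$ is flexible enough to let an arbitrary branch of $A$ play the role of a leftmost branch; this is of the same ``standard construction'' flavor as Fact~\ref{fact.dense} and its generalizations. Once it is in place the proof is short, and notably it uses neither the basic-cofinal-map theorem (Theorem~\ref{thm.canon}) nor any front of rank greater than $1$ --- in contrast with the Tukey analysis of Theorem~\ref{thm.main} --- simply because the Rudin--Keisler map $h$ is literally a coloring of the individual nodes of $\bW_k$ and can therefore be canonized directly.
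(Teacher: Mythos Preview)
Your proof is correct and follows essentially the same approach as the paper's: both reduce to canonizing the equivalence relation induced by $h$ on the front $\mathcal{AR}_1$ via Theorem~\ref{thm.rc} (equivalently Corollary~\ref{cor.er} with $n=1$), invoke genericity to place the canonizing $A$ in $\mathcal{G}_k$, and read off $\mathcal{V}\equiv_{RK}\pi_l(\mathcal{G}_k)$. Your version is more explicit about $\sigma$-closure putting $h$ in the ground model, about the identification $\mathcal{AR}_1|A=\{\{v\}:v\in\ran(A)\}$, and about the bijection $\iota_A$ witnessing the RK equivalence, but these are exactly the details the paper's terse proof leaves implicit.
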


\begin{proof}
Let $\mathcal{V}\le_{RK}\mathcal{G}_k$.
Note that $\mathcal{G}_k$ is isomorphic to the ultrafilter  $\mathcal{G}_k\re \mathcal{AR}_1$ having base set $\mathcal{AR}_1$.
Thus, there is a function $h:\mathcal{AR}_1\ra \om$ which witnesses that 
$h(\mathcal{G}_k\re\mathcal{AR}_1)=\mathcal{V}$.
Such an $h$ induces an equivalence relation on $\mathcal{AR}_1$.
Applying Theorem \ref{thm.rc},
there is an $A\in\mathcal{G}_k$ such that $h\re\mathcal{AR}_1|A$ is represented by an irreducible map on $\mathcal{AR}_1|A$.
The only irreducible maps on first approximations are the projection maps $\pi_l$, $l\le k$.
Thus, 
$h(\mathcal{G}_k\re\mathcal{AR}_1)$ must be  exactly $\pi_l(\mathcal{G}\re \mathcal{AR}_1)$ for some $l\le k$.
Hence, $\mathcal{V}$ is isomorphic to $\pi_l(\mathcal{G}_k)$, for some $l\le k$.
\end{proof}

Thus, the initial Tukey structure mirrors the initial Rudin-Keisler structure, even though each Tukey type contains many Rudin-Keisler isomorphism classes.


\section{Further directions}

Noticing that $[\om]^k$ is really a uniform barrier on $\om$ of rank $k$, we point out that our method of constructing  Ellentuck spaces of dimension $k$  can be extended transfinitely using uniform barriers of any countable rank.
The members of the spaces will not simply be restrictions of the barrier to infinite sets, but rather will require the use of auxiliary structures in the same vein as were used in \cite{Dobrinen/Todorcevic15} to construct the spaces $\mathcal{R}_{\al}$ for $\om\le \al<\om_1$.

In \cite{Dobrinen/Mijares/Trujillo14},
Dobrinen, Mijares, and Trujillo presented a template for constructing new topological Ramsey spaces which have on level 1 the Ellentuck space, and on level 2 some finite product of finite structures from a \Fraisse\ class of ordered relational structures with the Ramsey property.
They showed that any finite Boolean algebra appears as the initial Tukey structure of a p-point associated with some  space constructed by that method.
Moreover, that template also constructs topological Ramsey spaces for which the maximal filter is essentially a Fubini product of p-points, and which has initial Tukey structure consisting of  all Fubini iterates of a collection of p-points which is Tukey ordered as $([\om]^{<\om},\sse)$.
(See for instance the space $\mathcal{H}^{\om}$ in Example 25 ub \cite{Dobrinen/Mijares/Trujillo14}.)

\begin{problem}
Construct  topological Ramsey spaces with associated ultrafilters which are neither p-points nor Fubini products of p-points, but which have initial Tukey structures which are not simply chains.
\end{problem}

We conclude with a conjecture about what is actually necessary to prove the Abstract Nash-Williams Theorem for general topological Ramsey spaces.
In our proof of the Ramsey-classification theorem for equivalence relations on fronts, the Abstract Nash-Williams Theorem was sufficient; we did not need the full strength of the Abstract Ellentuck Theorem. 
The fact that (in earlier versions of this paper), we proved the Abstract Nash-Williams Theorem for the spaces $\mathcal{E}_k$ without using \bf A.3 \rm (b) leads to the following conjecture.

\begin{conjecture}
Let  $(\mathcal{R},\le,r)$ be a space for which $\mathcal{R}$ is a closed subspace of $\mathcal{AR}^{\om}$ and axioms \bf A.1 \rm through \bf A.4 \rm minus \bf A.3 \rm (b) hold.
Then  the Abstract Nash-Williams Theorem holds.
\end{conjecture}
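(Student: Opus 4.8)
The plan is to reprove the Abstract Nash--Williams Theorem from \textbf{A.1}--\textbf{A.4} minus \textbf{A.3}(b) by a direct combinatorial forcing argument, in the style of Galvin--Prikry and Nash--Williams, rather than deriving it from the Abstract Ellentuck Theorem, and to audit every appeal to \textbf{A.3}(b) along the way, showing that the thinness of a Nash--Williams family lets each such appeal be replaced by an appeal to the Pigeonhole \textbf{A.4} together with \textbf{A.3}(a) and closedness of $\mathcal{R}$.

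Fix a Nash--Williams family $\mathcal{F}$, a partition $\mathcal{F}=\mathcal{F}_0\cup\mathcal{F}_1$ into disjoint pieces, and $X\in\mathcal{R}$. For $a\in\mathcal{AR}$ and $A\in\mathcal{R}$ with $\depth_A(a)<\infty$, say that $A$ \emph{accepts} $a$ if for every $B\in[\depth_A(a),A]$ there is some $c\in\mathcal{F}_0$ with $a\sqsubseteq c$ which is an initial segment of a member of $[a,B]$, and that $A$ \emph{rejects} $a$ otherwise; note that by thinness of $\mathcal{F}_0$ at most one member of $\mathcal{F}_0$ is an initial segment of any fixed member of $\mathcal{R}$. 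The first step is the usual fusion lemma: there is $A'\le X$ which \emph{decides} every $a\lefin A'$, i.e.\ accepts it or rejects it. One builds $A'=\bigcup_n a_n$ with $a_n=r_n(A_n)$ recursively; at stage $n$ one two-colours $r_{n+1}[a_n,A_n]$ according to whether the new node is (to be) accepted or rejected, applies \textbf{A.4} to pass to $A_{n+1}\in[a_n,A_n]$ homogeneous for the colouring, keeps $[a_n,A_{n+1}]$ nonempty by \textbf{A.3}(a), and takes $A'=\bigcup_n a_n\in\mathcal{R}$ using closedness in $\mathcal{AR}^{\om}$.

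The dichotomy is then clean. If $A'$ rejects $\emptyset$, any witness $B\le A'$ has $\mathcal{F}_0|B=\emptyset$, and we are done with $i=0$. If $A'$ accepts $\emptyset$, a second fusion --- again using only \textbf{A.4} and \textbf{A.3}(a) --- produces $Y\le A'$ in which, granting the impossibility noted in the next paragraph, acceptance propagates through $1$-extensions, so by induction on length $A'$ accepts every $b\lefin Y$, in particular every $b\in\mathcal{F}_1$ with $b\lefin Y$. But taking $B=A'$ in the definition of acceptance, $A'$ accepting $b$ yields some $c\in\mathcal{F}_0$ with $b\sqsubseteq c$; since $b,c\in\mathcal{F}$ and $\mathcal{F}$ is Nash--Williams this forces $c=b$, contradicting $\mathcal{F}_0\cap\mathcal{F}_1=\emptyset$. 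Hence $\mathcal{F}_1|Y=\emptyset$, and we are done with $i=1$.

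The main obstacle --- the reason this is only a conjecture --- lies in the two fusions, specifically in showing that \emph{``$A$ accepts $a$ while $A$ rejects every $1$-extension of $a$ below $A$''} cannot happen: without this, the colouring produced by \textbf{A.4} could come out constantly ``reject'' below an accepted node, and the propagation (hence the whole argument) collapses. In Todorcevic's development this impossibility is secured precisely by \textbf{A.3}(b), which allows one to replace an arbitrary $B$ with $[a,B]\ne\emptyset$ by a canonical $B'\in[\depth_A(a),B]$ satisfying $\emptyset\ne[a,B']\sse[a,B]$ and thereby amalgamate the rejection witnesses of the various $1$-extensions into a single witness for $a$. The proposed route is to show that this amalgamation is superfluous when $\mathcal{F}$ is thin: since members of $\mathcal{F}_0$ are pairwise $\sqsubseteq$-incomparable, ``$A$ accepts $a$'' refers only to the unique $\mathcal{F}_0$-node end-extending $a$ along each branch, so the relevant recursion is genuinely level-by-level and each $1$-extension step should be settled by \textbf{A.4} alone. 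Carrying this out in full abstraction --- taking the $\mathcal{E}_k$-case handled in an earlier version of this paper as the template, but replacing its finitization-specific steps by bare appeals to \textbf{A.1}, \textbf{A.2}, \textbf{A.3}(a), \textbf{A.4}, and closedness --- is the crux, and where a would-be proof must do genuinely new work.
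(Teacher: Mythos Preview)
The statement you are addressing is a \emph{Conjecture} in the paper, not a proved result; the paper offers no proof, only the remark that in earlier versions the author verified the Abstract Nash--Williams Theorem for the specific spaces $\mathcal{E}_k$ without using \textbf{A.3}(b), and this motivated the conjecture in full generality. So there is nothing in the paper to compare your argument against.

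As for the proposal itself: it is not a proof, and to your credit you say so explicitly. You have correctly reproduced the standard combinatorial-forcing skeleton (accept/reject, fusion via \textbf{A.4} and \textbf{A.3}(a), closedness for the limit) and correctly isolated the one place where \textbf{A.3}(b) is actually invoked in Todorcevic's development, namely the impossibility of ``$A$ accepts $a$ yet $A$ rejects every $1$-extension of $a$''. What is missing is any mechanism for getting around that step. Your suggestion---that thinness of $\mathcal{F}_0$ makes the recursion ``genuinely level-by-level'' so that \textbf{A.4} alone suffices---does not engage with the actual difficulty. Acceptance of $a$ is a universal statement quantifying over all $B\in[\depth_A(a),A]$; to propagate it to a $1$-extension $a'$ one must, given an arbitrary $B'\in[\depth_A(a'),A]$, produce a $B\in[\depth_A(a),A]$ with $[a',B]\subseteq[a',B']$. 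That replacement is precisely what \textbf{A.3}(b) provides, and the $\sqsubseteq$-incomparability of members of $\mathcal{F}_0$ says nothing about the relationship between the basic open sets $[\depth_A(a),A]$ and $[\depth_A(a'),A]$. In the $\mathcal{E}_k$ case the author could exploit the concrete finitization to build the needed $B$ by hand; your proposal gives no abstract substitute.

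In short: you have written a clear and accurate diagnosis of why the conjecture is open, but the final paragraph is a hope rather than an argument, and the conjecture remains unproved.
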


\bibliographystyle{amsplain}
\bibliography{references}

\end{document}